\documentclass[12pt]{amsart}
\usepackage[T1]{fontenc}
\usepackage[utf8]{inputenc}
\usepackage{lmodern}
\usepackage[english]{babel}
\usepackage[%
    a4paper,
	left=2.5cm,       
	right=2.5cm,      
	top=3.5cm,        
	bottom=3.5cm,     
	heightrounded,    
	bindingoffset=0mm 
]{geometry}
\usepackage{amsmath}
\usepackage{amssymb}
\usepackage{amsfonts}
\usepackage{mathtools}
\usepackage{mathrsfs}
\usepackage{enumerate}
\usepackage{xcolor}
\usepackage[]{hyperref}
\hypersetup{
    colorlinks=true,       
    linkcolor=blue,          
    citecolor=magenta,        
    filecolor=magenta,      
    urlcolor=cyan           
}
\usepackage[noabbrev,capitalize,nameinlink]{cleveref}

\usepackage{bookmark}
\usepackage[alphabetic]{amsrefs}
\theoremstyle{plain}
\newtheorem{theorem}{Theorem}[section]
\newtheorem{theoremBis}{Theorem}

\newtheorem{lemma}[theorem]{Lemma}
\newtheorem{prop}[theorem]{Proposition}

\newtheorem{conjecture}[theorem]{Conjecture}
\crefname{conjecture}{Conjecture}{Conjectures}
\newtheorem{corollary}[theorem]{Corollary}
\theoremstyle{definition}

\newtheorem{assumption}[theorem]{Assumption}

\theoremstyle{remark}
\newtheorem{remark}[theorem]{Remark}

\numberwithin{equation}{section}
\newcommand{\eps}{\varepsilon}

\newcommand{\grad}{\nabla}

\newcommand{\norm}[1]{\left\| #1 \right\|}
\newcommand{\abs}[1]{\left| #1 \right|}
\newcommand{\set}[1]{\left\{ #1 \right\}}
\newcommand{\brak}[1]{\left\langle #1 \right\rangle} 
\newcommand{\EE}{\mathbb E}
\newcommand{\dee}{\mathrm{d}}

\title{Non-invariance of Gaussian Measures under the 2D Euler Flow}


\author{Jacob Bedrossian}
\address{
UCLA Department of Mathematics, Los Angeles, CA 90095-1555, USA
}
\email[J. Bedrossian]{jacob@math.ucla.edu}

\author{Micka\"el Latocca}
\address{
Laboratoire de Mathématiques et de Modélisation d'\'Evry (LaMME)\\
Université d'\'Evry\\
23 Bd François Mitterrand, 91000 Évry-Courcouronnes, France
}
\email[M. Latocca]{mickael.latocca@univ-evry.fr}
\usepackage{color}
\usepackage{listings}
\usepackage{setspace}

\definecolor{Code}{rgb}{0,0,0}
\definecolor{Decorators}{rgb}{0.5,0.5,0.5}
\definecolor{Numbers}{rgb}{0.5,0,0}
\definecolor{MatchingBrackets}{rgb}{0.25,0.5,0.5}
\definecolor{Keywords}{rgb}{0,0,1}
\definecolor{self}{rgb}{0,0,0}
\definecolor{Strings}{rgb}{0,0.63,0}
\definecolor{Comments}{rgb}{0,0.63,1}
\definecolor{Backquotes}{rgb}{0,0,0}
\definecolor{Classname}{rgb}{0,0,0}
\definecolor{FunctionName}{rgb}{0,0,0}
\definecolor{Operators}{rgb}{0,0,0}
\definecolor{Background}{rgb}{0.98,0.98,0.98}

\lstdefinelanguage{Python}{
numbers=left,
numberstyle=\footnotesize,
numbersep=1em,
xleftmargin=1em,
framextopmargin=2em,
framexbottommargin=2em,
showspaces=false,
showtabs=false,
showstringspaces=false,
frame=l,
tabsize=4,
basicstyle=\ttfamily\small\setstretch{1},
backgroundcolor=\color{Background},
commentstyle=\color{Comments}\slshape,
stringstyle=\color{Strings},
morecomment=[s][\color{Strings}]{"""}{"""},
morecomment=[s][\color{Strings}]{'''}{'''},
morekeywords={import,from,class,def,for,while,if,is,in,elif,else,not,and,or,print,break,continue,return,True,False,None,access,as,,del,except,exec,finally,global,import,lambda,pass,print,raise,try,assert},
keywordstyle={\color{Keywords}\bfseries},
morekeywords={[2]@invariant,pylab,numpy,np,scipy},
keywordstyle={[2]\color{Decorators}\slshape},
emph={self},
emphstyle={\color{self}\slshape},
}


\begin{document}


\begin{abstract}
In this article we consider the two-dimensional incompressible Euler equations and give a sufficient condition on Gaussian measures of jointly independent Fourier coefficients supported on $H^{\sigma}(\mathbb{T}^2)$ ($\sigma>3$) such that these measures are not invariant (in vorticity form).
We show that this condition holds on an open and dense set in suitable topologies (and so is generic in a Baire category sense) and give some explicit examples of Gaussian measures which are not invariant.
We also pose a few related conjectures which we believe to be approachable.  
\end{abstract}

\maketitle

\setcounter{tocdepth}{1}
{\small\tableofcontents}
           
\section{Introduction}

\subsection{The 2D Euler equations and vorticity formulation}

In this article we consider the two-dimensional incompressible Euler equations posed on $\mathbb{T}^2$ in vorticity formulation, written here as
\begin{equation}
    \label{eq.eulerVorticity}
    \tag{E}
    \left\{
    \begin{array}{rcll}
    \partial_t \Omega + u\cdot \nabla \Omega &=& 0 &\text{for } (t,x) \in (0,\infty) \times \mathbb{T}^2 \\ 
    \Omega(0,x) & = &\Omega_0(x) &\text{for } x \in\mathbb{T}^2\,,
    \end{array}
    \right.
\end{equation}
where $\Omega : \mathbb{T}^2 \to \mathbb{R}$ is the scalar vorticity, and $u : \mathbb{T}^2 \to \mathbb{R}^2$ is the velocity which is related to $\Omega$ via the Biot-Savart law, 
\begin{equation}
    \left\{
        \begin{array}{rcll}
            u &=&\grad^\perp (-\Delta)^{-1} \Omega \\ 
         \Omega &=& \nabla \wedge u = \partial_1 u_2 - \partial_2 u_1\,,
        \end{array}
    \right.
\end{equation}
where $\grad^\perp = (-\partial_2, \partial_1)$. 

For $\sigma > 1$, the vorticity equations~\eqref{eq.eulerVorticity} are globally well-posed in $\mathcal{C}^0([0,\infty),H^{\sigma}(\mathbb{T}^2))$ (originally due to Kato \cite{kato1967}, following the older classical work in $C^{0,\alpha}$ H\"older spaces by H\"older and Wolibner \cites{holder,Wolibner33}.
The central question regarding the 2D incompressible Euler equations is to understand the long-time dynamics in various settings.
One of the main questions is to determine whether one has the formation of small scales in the vorticity. 
It is widely expected that `generic' solutions to the 2D Euler equations create smaller scales, leading to unbounded growth in the $H^s$ and $C^{0,\alpha}$ norms. Even more is expected: that generic orbits are not be pre-compact in $L^2$ due to the loss of enstrophy ($L^2$ mass of vorticity) to higher frequencies. The latter is known as \v{S}ver\'{a}k's conjecture (see e.g. \cite{DE22}*{Conjecture 3}). See the recent review article \cite{DE22} and \cref{sec:LitMot} below for more discussion. 

One natural question one can ask about the long-term dynamics of a PDE is to find measures which are left invariant under its flow. 
In this article we are specifically interested in the possible invariance of Gaussian measures under the flow $\Phi_t$ of~\eqref{eq.eulerVorticity} (or quasi-invariance; see \cref{assump.quasinvar} below). 
It is known that the Gibbs measure of~\eqref{eq.eulerVorticity} -- the spatial white noise, supported on vorticities which are $H^{-1-\varepsilon}$ for all $\varepsilon > 0$ -- is invariant under the flow in a suitable sense; see \cites{albeverioCruzeiro, flandoli}. In the non-periodic setting~\cite{CS18}, Gaussian invariant measures have been constructed for the two-dimensional Euler equations at very low regularity (typically $\Omega \in H^{\beta}(\mathbb{R}^2)$, $\beta < -2$).

In this article we show that for sufficiently regular initial data ($\sigma > 3$), Gaussian measures with independent Fourier coefficients are generically not invariant under $\Phi_t$. See \cref{conj1} for related open directions and \cref{sec:LitMot} for more discussion about the motivations and existing literature. 

\subsection{Random initial data and invariant measures} Let $(\Xi, \mathcal{A},\mathbb{P})$ be a probability space.

In this article we solve~\eqref{eq.eulerVorticity} with Gaussian random initial data $\Omega_0$ for~\eqref{eq.eulerVorticity} of the form  
\begin{equation}\label{eq.typicalGaussianData} 
    \Omega_0^{\omega}(x) = \sum_{n \in \mathbb{Z}^2} a_ng_n^{\omega}e^{i n\cdot x }\,,
\end{equation}
where the $(g_n)_{n\in\mathbb{Z}^2}$ are independent and identically distributed standard complex Gaussian random variables, and $(a_n)_{n \in \mathbb{Z}^2}$ is  a complex-valued sequence. Note that replacing $a_n \mapsto a_n e^{i \theta_n}$ for any sequence of phases $\theta_n \in \mathbb R$ defines the same random initial data in law. For this reason we restrict to real-valued $a_n$ without loss of generality. 

Let us be more precise and let $\mathbb{Z}^2_+ \coloneqq \{(n_1,n_2) \in \mathbb{Z}^2, n_1 >0 \text{ or } n_1=0, n_2 \geqslant 0\}$. We will work under the following assumption for the $(g_n)_{n\in\mathbb{Z}^2}$.
\begin{assumption}\label{assump.gn}
    \begin{enumerate}
        \item For all $n \in \mathbb{Z}^2_+$, $g_n \coloneqq r_n+is_n$ where $\{r_n, s_n\}_{n\in \mathbb{Z}^2_+}$ are independent and identically distributed real standard Gaussians $\mathcal{N}(0,1)$. 
        \item $g_{-n}=\bar{g}_n$ for all $n \in \mathbb{Z}^2_+$.
    \end{enumerate}
\end{assumption}

Let us use the norm defined by  
\[
    \norm{(a_n)}_{h^\sigma} \coloneqq \left(\sum_{n \in \mathbb{Z}^2} \brak{n}^{2\sigma} \abs{a_n}^{2} \right)^{1/2},
\] 
and make the following assumptions on the $(a_n)_{n\in\mathbb{Z}^2}$, which are tailored to ensure that almost-surely $\Omega_0^{\omega} \in H^{\sigma}$:
\begin{assumption}\label{assump.an} We assume that $\displaystyle (a_n)_{n\in\mathbb{Z}^2} \in h^\sigma$, where 
\[ 
    h^{\sigma} = \set{(a_n)_{n\in \mathbb{Z}^2}: a_n \in \mathbb{R},\, a_0 = 0, \; a_{-n} = a_n, \; \norm{(a_n)}_{h^\sigma} < \infty}. 
\]
\end{assumption}

We recall that $h^\sigma$ is a Hilbert space equipped with the inner product associated with the complete norm $\norm{\cdot}_{h^\sigma}$. 

Given the isotropic nature of~\eqref{eq.eulerVorticity} we also consider the \textit{radially symmetric} sequences belonging to the space
\begin{align*}
h_{\operatorname{rad}}^\sigma = \set{(a_n) \in h^\sigma : a_m = a_{n}, \text{ for all } m,n \; \textup{such that} \; \abs{m} = \abs{n}}. 
\end{align*}

\begin{remark} \cref{assump.an} and \cref{assump.gn} ensure that~\eqref{eq.typicalGaussianData} is real-valued. The condition $a_0=0$ ensures that $\int_{\mathbb{T}^2}\Omega_0(x) \,\mathrm{d}x =0$, which should be the case since $\Omega_0=\nabla \wedge u(0,\cdot)$. 
\end{remark}

Let us denote by $\mu_{(a_n)}$ the law that the random variable~\eqref{eq.typicalGaussianData} induces on $L^2(\mathbb{T}^2,\mathbb{R})$  
(where we assume that $(g_n)_{n\in\mathbb{Z}^2}$ satisfies \cref{assump.gn}). The set of such corresponding measures is therefore defined by
\begin{equation}
    \label{def.Msig}
    \mathcal{M}^{\sigma} \coloneqq \{ \mu_{(a_n)}, \quad (a_n)_{n\in\mathbb{Z}^2}\in h^{\sigma}\}, 
\end{equation}
as well as its radial counterpart $\mathcal{M}^{\sigma}_{\operatorname{rad}}\coloneqq \{ \mu_{(a_n)}, \; (a_n)_{n\in\mathbb{Z}^2}\in h_{\operatorname{rad}}^{\sigma}\}$.

We remark that for any $\mu\in\mathcal{M}^{\sigma}$ there holds
\begin{equation}
    \label{eq.HsigSupport}
    \mathbb{E}_{\mu}[\|\Omega\|_{H^{\sigma}}^2]\coloneqq \int_{L^2}\,\|\Omega\|_{H^{\sigma}}^2\mathrm{d}\mu(\Omega)=\mathbb{E}[\|\Omega_0^{\omega}\|^2_{H^{\sigma}}] =\sum_{n\in\mathbb{Z}^2} \langle n\rangle^{2\sigma} |a_n|^2 < \infty, 
\end{equation}
which implies that $\operatorname{supp} \mu \subset H^{\sigma}(\mathbb{T}^2)$.

Let us recall that a measure $\mu$  on $H^{\sigma}$ is said to be invariant under $\Phi_t$ if for any $t\in\mathbb{R}$ there holds $(\Phi_t)_*\mu=\mu$, \textit{i.e.\ }for all measurable $A \subset H^{\sigma}$,
\[
    \mu(\Omega_0: \; \Omega(t) \in A)=\mu(\Omega_0: \; \Omega_0 \in A).    
\]
A measure $\mu$ is said to be quasi-invariant under the flow $\Phi_t$ if for all $t\in\mathbb{R}$, $\mu_t \coloneqq (\Phi_t)_*\mu$ is equivalent to $\mu$, that is $\mu_t \ll \mu \ll \mu_t$. In the following we will consider the following quantified version of quasi-invariance: 

\begin{assumption}\label{assump.quasinvar}
There exists $\eta >2$ and  $\alpha >0$ such that for all measurable set $A \subset H^{\sigma}$, there holds for any $t \in \mathbb{R}$, 
\begin{equation}
    \label{eq.quasinvarDef}
    \mu_t(A) \leqslant
        \mu(A)^{(1+|t|^{\eta})^{-\alpha}}.  
\end{equation}
\end{assumption}

\begin{remark} Observe that when $t \to 0$ there holds $\mu(A)^{(1+|t|^{\eta})^{-\alpha}} \to \mu(A)$, and also that $\mu(A)^{(1+|t|^{\eta})^{-\alpha}} \to 1$ as $t\to \infty$, which is consistent with what is expected by the evolution of a quasi-invariant measure, namely that the `information' will deteriorate in large time. 
\end{remark}

\begin{remark}\label{rem.timeReverse} 
Applying~\eqref{eq.quasinvarDef} at time $-t$ instead of $t$ and to the set $\Phi_{-t}A$ instead of $A$ yields $\mu(A) \leqslant \mu_t(A)^{(1+|t|^{\eta})^{-\alpha}}$. Therefore, \cref{assump.quasinvar} should be viewed as a quantitative version of $\mu_t \ll \mu \ll \mu_t$.
For non-time-reversible flows, one would need to explicitly state the reverse of \eqref{eq.quasinvarDef}. 
\end{remark}

\subsection{Main results}
Our first main result is a criterion which detects measures in $\mathcal{M}^\sigma$ which are non (quasi)-invariant under the flow of~\eqref{eq.eulerVorticity}. 

For any sequence $(a_n)$ we introduce the number $\gamma_{s,(a_n)}$ defined by 
\begin{equation}
    \label{eq.gamma}
    \gamma_{s,(a_n)} = \frac{1}{(2\pi)^4}\sum_{n,q \in\mathbb{Z}^2} |a_n|^2|a_q|^2 \frac{(q\cdot n^{\perp})^2}{2}\left(\frac{1}{|n|^2} - \frac{1}{|q|^2}\right) \beta_{n,q},
\end{equation}
where
\begin{align}\label{eq.beta}
\beta_{n,q}=\langle n \rangle ^{2s}\left(\frac{1}{|q|^2}-\frac{1}{|q+n|^2}\right) + \langle q \rangle ^{2s} \left(\frac{1}{|q+n|^2}-\frac{1}{|n|^2}\right) + \langle n + q \rangle^{2s} \left(\frac{1}{|n|^2}-\frac{1}{|q|^2}\right). 
\end{align}
We prove the following result concerning $\gamma_{s,(a_n)}$. 

\begin{theoremBis}\label{thm.main}
Let $\sigma > 3$ and $\mu= \mu_{(a_n)} \in \mathcal{M}^{\sigma}$, where $\mathcal{M}^{\sigma}$ is defined by \eqref{def.Msig}. Let $\gamma_s=\gamma_{s,(a_n)}$, defined by~\eqref{eq.gamma}. If $\gamma_s \neq 0$ for some $s\in(0,\sigma -3)$, then $\mu$ is not quasi-invariant in the sense of \cref{assump.quasinvar} under the flow of~\eqref{eq.eulerVorticity}. In particular $\mu$ is not invariant under the flow of~\eqref{eq.eulerVorticity}. 
\end{theoremBis}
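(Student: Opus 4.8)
The plan is to monitor the time-evolution of the expected squared $H^s$ norm of the vorticity along the Euler flow, and to show that under the given Gaussian data it moves off its initial value to second order in $t$ at a rate controlled by $\gamma_s$. Concretely, set $F(t) \coloneqq \mathbb{E}_\mu[\|\Omega(t)\|_{H^s}^2]$. At $t=0$ this is $\sum_n \langle n\rangle^{2s}|a_n|^2$, which is finite since $s<\sigma-3<\sigma$. The first step is to Taylor-expand $\Omega(t)$ in time using the equation: $\partial_t\Omega = -u\cdot\nabla\Omega = -\operatorname{div}(u\Omega)$, and $\partial_t^2 \Omega = -u\cdot\nabla(u\cdot\nabla\Omega) - (\partial_t u)\cdot\nabla\Omega$, where $\partial_t u = \nabla^\perp(-\Delta)^{-1}\partial_t\Omega$. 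Since $\sigma>3$, local well-posedness in $H^\sigma$ plus the fact that the flow preserves all $L^p$ norms of vorticity (hence gives control that does not depend on high norms for the purpose of the Taylor remainder) should give $\Omega(t) = \Omega_0 + t\,\Omega_1 + \tfrac{t^2}{2}\Omega_2 + R(t)$ with $\|R(t)\|_{H^s} = o(t^2)$ in a suitable averaged sense, uniformly enough to differentiate $F$ twice at $0$.

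**Next I would** compute $F'(0)$ and $F''(0)$ in terms of the Fourier data. Writing $\|\Omega(t)\|_{H^s}^2 = \sum_k \langle k\rangle^{2s}|\widehat{\Omega(t)}(k)|^2$, we get $F'(0) = 2\operatorname{Re}\sum_k \langle k\rangle^{2s}\overline{\widehat{\Omega_0}(k)}\widehat{\Omega_1}(k)$ and $F''(0) = 2\sum_k \langle k\rangle^{2s}|\widehat{\Omega_1}(k)|^2 + 2\operatorname{Re}\sum_k\langle k\rangle^{2s}\overline{\widehat{\Omega_0}(k)}\widehat{\Omega_2}(k)$. Taking expectations, one uses that the $g_n$ are i.i.d.\ complex Gaussians: $\mathbb{E}[g_n\bar g_m]=\delta_{nm}$ (with the conjugation symmetry of \cref{assump.gn}), $\mathbb{E}[g_n g_m]=\delta_{n,-m}$, and Wick's theorem for the fourth moments appearing in $\mathbb{E}\|\Omega_1\|_{H^s}^2$ and in the $\Omega_0\cdot\Omega_2$ term (both are quartic in the $g$'s). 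The nonlinearity $\widehat{u\cdot\nabla\Omega}(k) = \sum_{n+q=k} (\text{symbol})\,\widehat\Omega(n)\widehat\Omega(q)$ with the Biot–Savart multiplier $\tfrac{q\cdot n^\perp}{|n|^2}$ (antisymmetrized) produces exactly the kernel $\frac{(q\cdot n^\perp)^2}{2}\left(\frac1{|n|^2}-\frac1{|q|^2}\right)$ and the bracket weights collected in $\beta_{n,q}$. I expect the odd-order term $\mathbb{E}[F'(0)]$ to vanish by parity of the Gaussians (it is cubic in $g$), so that $\mathbb{E}_\mu[\|\Omega(t)\|_{H^s}^2] = \sum_n\langle n\rangle^{2s}|a_n|^2 + \gamma_s t^2 + o(t^2)$, with $\gamma_s$ precisely \eqref{eq.gamma}.

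**Then** the non-invariance is a soft consequence. If $\mu$ were quasi-invariant in the sense of \cref{assump.quasinvar}, then for the tail sets $A_M = \{\Omega : \|\Omega\|_{H^s}>M\}$ one controls $\mu_t(A_M)$ by $\mu(A_M)^{(1+|t|^\eta)^{-\alpha}}$; integrating in $M$ via $\mathbb{E}_{\mu_t}[\|\Omega\|_{H^s}^2] = \int_0^\infty 2M\,\mu_t(A_M)\,dM$ and comparing with $\mathbb{E}_\mu[\|\Omega\|_{H^s}^2]=\int_0^\infty 2M\,\mu(A_M)\,dM$, one shows the left side can differ from the right by at most $O(|t|^\eta)$ (using $\eta>2$ to beat the $t^2$), indeed by $o(t^2)$ after optimizing; but the expansion above says the difference is $\gamma_s t^2 + o(t^2) \neq 0$. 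This contradiction forces $\mu$ not quasi-invariant; a fortiori not invariant, since invariance would give $F(t)\equiv F(0)$ outright. A cleaner route for the ``not invariant'' part alone: invariance implies $\mathbb{E}_\mu[\|\Omega(t)\|_{H^s}^2]$ is constant in $t$, contradicting $F''(0)=2\gamma_s\neq 0$.

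**The main obstacle** is the second step: justifying the Taylor expansion of $F(t)$ to second order \emph{rigorously and with expectations}, i.e.\ producing a remainder estimate $\mathbb{E}_\mu[\,|\,\|\Omega(t)\|_{H^s}^2 - (\text{quadratic in }t)\,|\,] = o(t^2)$. This requires (a) deterministic local-in-time bounds $\|\Omega(t)\|_{H^\sigma}\lesssim \|\Omega_0\|_{H^\sigma}$ on a time interval depending only on $\|\Omega_0\|_{H^\sigma}$ (Kato), (b) controlling the remainder in $H^s$ with $s<\sigma-3$ — the three-derivative gap is exactly what lets the quadratic remainder $u\cdot\nabla(u\cdot\nabla\Omega)$, which costs two derivatives on $\Omega$ plus the Biot–Savart gain, land in $H^s$ with a bound polynomial in $\|\Omega_0\|_{H^\sigma}$ — and (c) upgrading to expectations using Gaussian moment bounds (e.g.\ Fernique / hypercontractivity) together with the almost-sure global well-posedness for $\sigma>1$ to handle the event $\{\|\Omega_0\|_{H^\sigma}$ large$\}$. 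The Gaussian/Wick combinatorics in the third step is routine but must be done carefully to match signs and to see that the diagonal ($n=\pm q$, or $n$ or $q$ proportional) contributions either vanish or are already incorporated; the antisymmetrization that turns the bilinear Euler symbol into the symmetric kernel with the factor $\tfrac12$ is where the structure $\frac1{|n|^2}-\frac1{|q|^2}$ emerges.
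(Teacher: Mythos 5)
Your overall architecture coincides with the paper's: expand $\mathbb{E}_\mu[\|\Omega(t)\|_{H^s}^2]$ to second order in $t$, use Wick calculus on the quartic Gaussian moments to identify the coefficient $\gamma_s$ (with the cubic term vanishing by parity), and then contradict invariance (constancy) or quasi-invariance (an $O(|t|^\eta)$ bound with $\eta>2$, obtained from the layer-cake formula exactly as you sketch). Those two pillars are correct and match Proposition~\ref{prop.orthogonality} and Lemma~\ref{lem.quasinvarDifference}.

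However, there is a genuine gap in your treatment of the Taylor remainder, and it is precisely the step the paper identifies as the crux. You propose to control the remainder using only deterministic local well-posedness in $H^\sigma$ plus Gaussian moment bounds (Fernique) and almost-sure global well-posedness, remarking that conservation of the $L^p$ norms of vorticity ``gives control that does not depend on high norms.'' This does not close. The $L^p$ conservation laws give no control of $\|\Omega(t)\|_{H^{s'}}$ for $s'>0$; the only unconditional global bound is the Yudovich/Wolibner double exponential $\|\Omega(t)\|_{H^\sigma}\leqslant C(\Omega_0)e^{e^{Ct}}$, whose dependence on $\Omega_0$ is not integrable against a Gaussian measure. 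For a \emph{fixed deterministic} $t>0$ (which is what differentiating $F$ at $0$ requires), the local existence time $\tau_\omega\sim c/\|\Omega_0\|_{H^\sigma}$ is random and the event $\{\tau_\omega<t\}$, while of small probability, carries no integrable bound on $\|\Omega(t)\|_{H^s}^2$, so you cannot show its contribution is $o(t^2)$; likewise the naive energy estimate for the remainder produces random prefactors such as $e^{\|\Omega_0\|_{H^\sigma}^3}$ which are not in $L^1_\omega$. The paper's resolution is to use the (quasi-)invariance hypothesis \emph{itself} at this stage: Bourgain's globalization argument (Proposition~\ref{prop.globalBound}) yields $\|\Omega^\omega(t)\|_{H^\sigma}\leqslant C_\omega(1+t^\eta)^{\alpha/2}\sqrt{\log(2+t)}$ with $\mathbb{E}[e^{cC_\omega^2}]<\infty$, valid on a fixed time interval, and these a priori bounds are then fed into the $H^s$ energy estimate for $w(t)=\Omega(t)-\Omega_0+tB_1(\Omega_0)-t^2B_2(\Omega_0)$ (Proposition~\ref{prop.estW}), carefully keeping all random constants with at worst sub-Gaussian exponential moments so that Gr\"onwall closes in $L^2_\omega$. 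Without this conditional structure --- assume (quasi-)invariance, extract integrable a priori bounds, then derive the contradiction --- the expansion of $F(t)$ cannot be justified, so your step (c) as stated would fail.
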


The proof basically proceeds by showing that initial conditions sampled from any invariant measure $\mu$ which satisfies $\gamma\neq 0$ will satisfy
\begin{align*}
\EE[ \norm{\Omega^{\omega}(t)}_{H^s}^2] - \EE[\norm{\Omega^{\omega}_0}_{H^s}^2] \approx \gamma t^2 + \mathcal{O}(t^3) \quad \textup{ as } t \to 0, 
\end{align*}
leading to a contradiction. 
This is done by justifying a Taylor expansion in $t$ for the evolution of the vorticity along the flow of~\eqref{eq.eulerVorticity}. 
It is important to note that the proof uses the assumption of (quasi)-invariance of the measure and the \textit{a priori} estimates it implies in order to justify the approximation.

Our next task is to study the condition $\gamma = 0$ and prove that for a large class of $(a_n)_{n\in \mathbb{Z}^2}$ belonging to $h^{\sigma}$, there holds $\gamma_{s, (a_n)} \neq 0$ for some $s \in (0,\sigma]$. 
To this end, let us introduce the set 
\[
    A^{\sigma} \coloneqq \set{(a_n)_{n\in \mathbb{Z}^2} \in h^{\sigma}, \text{ that there exists } s\in (0,\sigma-3), \gamma_{s,(a_n) 0} \neq 0}.  
\] 
Denote by $\mathcal{A}^{\sigma}\subset \mathcal{M}^{\sigma}$ the set of corresponding measures. 
Similarly we also define $A_{\operatorname{rad}}^{\sigma}$ and $\mathcal{A}_{\operatorname{rad}}^{\sigma}$. \cref{thm.main} guarantees that all of the measures in $\mathcal{A}^{\sigma}$ are not (quasi)-invariant under the two-dimensional Euler flow. Furthermore, we have the following:

\begin{theoremBis}[Genericity in the Baire sense]\label{thm.genericity} Let $\sigma >3$. 
\begin{enumerate}
\item[(i)] The set $A^\sigma$ (resp.\ $A^\sigma_{\operatorname{rad}}$) is an open, dense set in $h^\sigma$ (resp.\ $h^\sigma_{\operatorname{rad}}$).  
\item[(ii)] Similarly, $\mathcal{A}^{\sigma}$ (resp.\ $\mathcal{A}^{\sigma}_{\operatorname{rad}}$) is an open dense set of $\mathcal{M}^{\sigma}$ (resp.\ $\mathcal{M}^{\sigma}_{\operatorname{rad}})$ equipped with the topology induced by the Wasserstein $W_1$ distance associated with the $H^\sigma$ metric.
\end{enumerate}    
\end{theoremBis}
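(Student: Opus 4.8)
The plan is to convert every assertion into one about the spectral sequence $\rho=(\rho_n)_{n\in\mathbb{Z}^2}$, $\rho_n:=\abs{a_n}^2$, since $\gamma_{s,(a_n)}$ — see \eqref{eq.gamma} — depends on $(a_n)$ only through $\rho$; write $\gamma_s(\rho)=\sum_{n,q}\rho_n\rho_q K_s(n,q)$ for the resulting quadratic form, with $K_s$ the kernel of \eqref{eq.gamma}, and let $B_s$ be its symmetric bilinear form. \emph{Openness} is soft: the kernel estimates that make \eqref{eq.gamma} converge (and thus define $\gamma_{s,(a_n)}$ in \cref{thm.main}) show that, for each fixed $s\in(0,\sigma-3)$, the series converges absolutely and uniformly on bounded subsets of $h^\sigma$, so $(a_n)\mapsto\gamma_{s,(a_n)}$ is continuous and $U_s:=\set{(a_n):\gamma_{s,(a_n)}\neq0}$ is open; hence $A^\sigma=\bigcup_{s\in(0,\sigma-3)}U_s$ is open, and likewise $A^\sigma_{\operatorname{rad}}$ in $h^\sigma_{\operatorname{rad}}$. \emph{For density}, fix $(a^0_n)$ with spectrum $\rho^0$; if $(a^0_n)\in A^\sigma$ there is nothing to do, so assume $\gamma_s(\rho^0)=0$ for all $s\in(0,\sigma-3)$. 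The idea is to perturb by a fixed finitely supported nonnegative $\delta$: since $\rho^0+t\delta\geqslant0$ for $t>0$, the sequence $(a_n^t)$ that keeps $\operatorname{sgn}(a^0_n)$ and has spectrum $\rho^0+t\delta$ lies in $h^\sigma$ and tends to $(a^0_n)$ in $h^\sigma$ as $t\to0^+$ (only finitely many coordinates move), while
\[
    \gamma_s(\rho^0+t\delta)=\gamma_s(\rho^0)+2tB_s(\rho^0,\delta)+t^2\gamma_s(\delta)=2tB_s(\rho^0,\delta)+t^2\gamma_s(\delta)
\]
is, for each fixed $s$, a polynomial in $t$ of degree at most $2$, which is the zero polynomial only if $B_s(\rho^0,\delta)=\gamma_s(\delta)=0$. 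Thus it suffices to choose $\delta$ once and for all so that $\gamma_s(\delta)\neq0$ for \emph{some} $s\in(0,\sigma-3)$: for that $s$ the polynomial $t\mapsto\gamma_s(\rho^0+t\delta)$ is nonzero, vanishes for at most two $t$, and any small $t>0$ avoiding them furnishes an element of $A^\sigma$ arbitrarily close to $(a^0_n)$.

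It remains to exhibit such a $\delta$. For the non-radial case take $\delta$ supported on two mode pairs $\set{\pm p,\pm q}$ with $p\not\parallel q$ and $\abs{p}\neq\abs{q}$ — concretely $p=(1,0)$, $q=(1,1)$ and $\delta_{\pm p}=\delta_{\pm q}=1$. In $\gamma_s(\delta)$ only the genuine cross terms survive (the factor $q\cdot n^{\perp}$ in \eqref{eq.gamma} kills the terms whose two arguments are parallel), and a direct computation from \eqref{eq.gamma}--\eqref{eq.beta} yields $\gamma_s(\delta)=\kappa_{p,q}\bigl(\tfrac{3}{10}\,2^{s}-\tfrac{4}{5}\,3^{s}+\tfrac{1}{2}\,6^{s}\bigr)$, with $\kappa_{p,q}$ a nonzero multiple of $(q\cdot p^{\perp})^2\bigl(\abs{p}^{-2}-\abs{q}^{-2}\bigr)$ — this is where $p\not\parallel q$ and $\abs{p}\neq\abs{q}$ enter. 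For the radial case take instead the shell indicator $\delta=\mathbf{1}_{\set{\abs{n}^2=1}}+\mathbf{1}_{\set{\abs{n}^2=2}}\in h^\sigma_{\operatorname{rad}}$; the diagonal interactions $\abs{n}=\abs{q}$ drop out of \eqref{eq.gamma}, and summing over the two shells produces $\gamma_s(\delta)$ equal to a nonzero constant times the same combination $\tfrac{3}{10}2^{s}-\tfrac{4}{5}3^{s}+\tfrac{1}{2}6^{s}$. In either case $\gamma_s(\delta)$ is a real-analytic function of $s$ which vanishes at $s=0$ — as it must, since $\gamma_0\equiv0$ by \eqref{eq.beta}, reflecting conservation of enstrophy — but is not identically zero (it is a nontrivial linear combination of the independent functions $2^s,3^s,6^s$); hence $\gamma_s(\delta)\neq0$ for all but finitely many $s$, in particular for some $s\in(0,\sigma-3)$. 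This proves part (i).

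For part (ii) we show that $\Psi\colon\set{(b_n)\in h^\sigma:b_n\geqslant0}\to\mathcal M^\sigma$, $\Psi((b_n))=\mu_{(b_n)}$, is a homeomorphism for the $W_1$ topology (and its restriction to $h^\sigma_{\operatorname{rad}}$ likewise). Since $\gamma_{s,(a_n)}$ is insensitive to the signs of the $a_n$, and the approximants built above keep $b_n\geqslant0$, the set $A^\sigma\cap\set{b\geqslant0}$ is open and dense in $\set{b\geqslant0}$, so part (ii) follows from the homeomorphism claim. That $\Psi$ is Lipschitz — hence continuous, which already yields density of $\mathcal A^\sigma=\Psi(A^\sigma\cap\set{b\geqslant0})$ from density of $A^\sigma\cap\set{b\geqslant0}$ — is immediate from \eqref{eq.HsigSupport} and the coupling of $\mu_{(b)}$, $\mu_{(b')}$ through a common family $(g_n)$: $W_1(\mu_{(b)},\mu_{(b')})\leqslant\EE\norm{\Omega_0^{(b)}-\Omega_0^{(b')}}_{H^\sigma}\leqslant C\norm{(b)-(b')}_{h^\sigma}$.

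The real work is the continuity of $\Psi^{-1}$, i.e.\ that $W_1(\mu_{(b^k)},\mu_{(b)})\to0$ forces $(b^k)\to(b)$ in $h^\sigma$. Two ingredients. (a) \emph{Coordinatewise convergence}: pushing forward by $L_m\colon\Omega\mapsto\int_{\mathbb{T}^2}\Omega\cos(m\cdot x)\,\dee x$ — a functional bounded on $H^\sigma$ of norm $\lesssim\brak{m}^{-\sigma}$ that maps $\mu_{(b)}$ to a centered real Gaussian of standard deviation $\propto\abs{b_m}$ — and using $W_1(\mathcal N(0,\tau_1^2),\mathcal N(0,\tau_2^2))=\EE\abs{Z}\,\abs{\tau_1-\tau_2}$, one gets the uniform estimate $\brak{m}^{\sigma}\bigl\lvert\,\abs{b^k_m}-\abs{b_m}\,\bigr\rvert\lesssim W_1(\mu_{(b^k)},\mu_{(b)})$, hence $b^k_m\to b_m$ for every $m$. (b) \emph{Equitightness of the tails}: let $P_N$ be the $H^\sigma$-orthogonal projection onto frequencies $\abs{n}\leqslant N$ and $Q_N=\mathrm{Id}-P_N$; then $(Q_N)_\ast\mu_{(b^k)}$ is a centered Gaussian with trace $T^k_N:=\EE_{\mu_{(b^k)}}\norm{Q_N\Omega}_{H^\sigma}^2\propto\sum_{\abs{n}>N}\brak{n}^{2\sigma}\abs{b^k_n}^2$, and combining the $1$-Lipschitz bound $W_1\bigl((Q_N)_\ast\mu_{(b^k)},(Q_N)_\ast\mu_{(b)}\bigr)\leqslant W_1(\mu_{(b^k)},\mu_{(b)})$, the estimate $W_1\bigl((Q_N)_\ast\mu_{(b)},\delta_0\bigr)=\EE_{\mu_{(b)}}\norm{Q_N\Omega}_{H^\sigma}=:e_N\to0$, and the elementary lower bound $W_1(\mathcal N(0,D),\delta_0)=\EE\norm{x}\geqslant c_\ast\sqrt{\operatorname{Tr}D}$ (proved by distinguishing $\norm{D}_{\mathrm{op}}\leqslant\tfrac12\operatorname{Tr}D$, where the Gaussian Poincaré inequality $\operatorname{Var}\norm{x}\leqslant\norm{D}_{\mathrm{op}}$ applies, from the case of a dominant eigendirection), one obtains $c_\ast\sqrt{T^k_N}\leqslant W_1(\mu_{(b^k)},\mu_{(b)})+e_N$, whence $\sup_k T^k_N\to0$ as $N\to\infty$ (treating the finitely many small $k$ separately). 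Coordinatewise convergence together with equitightness gives $\norm{(b^k)}_{h^\sigma}^2\to\norm{(b)}_{h^\sigma}^2$, so $(b^k)$ is $h^\sigma$-bounded and converges weakly to $(b)$, and weak plus norm convergence in the Hilbert space $h^\sigma$ forces $(b^k)\to(b)$ strongly (one may also argue directly via $\norm{\sqrt{\rho^k}-\sqrt{\rho}}_{h^\sigma}^2\leqslant\sum_n\brak{n}^{2\sigma}\abs{\rho^k_n-\rho_n}$). The main obstacle is precisely this equitightness step: $W_1$-convergence is strictly stronger than weak convergence because it controls how much enstrophy escapes to high frequencies, and extracting that control — by projecting onto frequencies where the limit measure is negligible — is what upgrades convergence of the covariances to the strong $h^\sigma$ topology; everything else (openness, the algebra behind density, Lipschitz continuity of $\Psi$) is routine.
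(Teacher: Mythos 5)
Your proof is correct, and it departs from the paper's argument in ways worth recording. For part (i) the openness step is essentially the paper's (continuity of $(a_n)\mapsto\gamma_{s,(a_n)}$ on $h^\sigma$, as in \cref{lemm.continuity}), but for density the paper perturbs the amplitudes, $c_n=a_n+\eps b_n$ with $(b_n)$ the seed of \cref{lem.example1}, and must track a quartic polynomial in $\eps$ (the coefficients $\kappa_1,\kappa_2,\kappa_3$ plus the $\eps^4\gamma_{(b_n)}$ term), whereas you perturb the spectrum $\rho_n=\abs{a_n}^2$ additively, exploiting the fact that $\gamma_s$ is exactly quadratic in $\rho$ and reducing the case analysis to a degree-two polynomial in $t$ --- the same idea, but cleaner. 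Your explicit construction of a \emph{radial} seed (the two shell indicators on $\abs{n}^2=1$ and $\abs{n}^2=2$) is a genuine improvement: the paper invokes the example of \cref{lem.example1} as ``radially symmetric,'' which it is not ($a_{(0,1)}=0\neq a_{(1,0)}$), so the density of $A^\sigma_{\operatorname{rad}}$ in $h^\sigma_{\operatorname{rad}}$ really does require a radial seed such as yours; your check that the only surviving cross-shell terms are those with $\abs{n+q}^2=5$ and that they all produce the same nonvanishing combination $\tfrac{3}{10}2^{s}-\tfrac{4}{5}3^{s}+\tfrac{1}{2}6^{s}$ is correct. For part (ii) the paper proves only the forward Lipschitz bound $W_1(\mu_{(a)},\mu_{(a^k)})\leqslant\norm{a^k-a}_{h^\sigma}$ via \eqref{eq.KantorovichRubinstein}; this yields density of $\mathcal{A}^\sigma$ but not, by itself, openness, since a $W_1$-ball around $\mu\in\mathcal{A}^\sigma$ could a priori contain measures arising from sequences far from $A^\sigma$. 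Your inverse-continuity argument --- recovering $\abs{b_m}$ from one-dimensional marginals, and obtaining uniform smallness of the high-frequency tails by comparing $(Q_N)_*\mu_{(b^k)}$ to $\delta_0$ in $W_1$ and using $\EE\norm{x}\gtrsim\sqrt{\operatorname{Tr}D}$ for Gaussians --- upgrades the parametrization to a homeomorphism on nonnegative sequences, which is exactly what is needed to transfer openness from $h^\sigma$ to $(\mathcal{M}^\sigma,W_1)$. The cost is a longer argument; the benefit is that part (ii) is actually complete, where the paper's treatment of openness in the Wasserstein topology is left implicit.
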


We recall the  Kantorovich-Rubinstein duality formulation of the Wasserstein distance: if $\varphi:H^\sigma \to \mathbb R$, we define 
\[
    \norm{\varphi}_{\text{Lip}} \coloneqq \sup_{f \in H^\sigma} \abs{\varphi(f)} + \sup_{f,g \in H^\sigma} \frac{\abs{\varphi(f) - \varphi(g)} } {\norm{f - g}_{H^\sigma}}, 
\]
\begin{equation}
    \label{eq.KantorovichRubinstein}
    W_1(\mu,\nu)  = \sup_{\substack{\varphi : H^\sigma \to \mathbb R \\ \norm{\varphi}_{Lip} \leqslant 1}} \left\vert\int_{H^\sigma} \varphi(u) \dee \nu(u) - \int_{H^\sigma} \varphi(u) \dee \mu(u)\right\vert.
\end{equation}

\begin{remark}
It follows that the set of all measures in $\mathcal{M}^{\sigma}$ which are invariant under the two-dimensional Euler flow are included in a closed, nowhere dense set of $\mathcal{M}^{\sigma}$ (in the Wasserstein-1 topology), which is hence a meager set in the sense of Baire category.
\end{remark}

We point out that this notion of genericity is strong enough to ensure that many mutually singular measures are included. This will follow from the Feldman-H\'ajek theorem.

\begin{corollary}[Mutually singular non-invariant measures]\label{thm.probaGenericity}
Let $\sigma >3$. Every open set $U \subset h^\sigma$ contains an uncountable set $I \subset U$ such that the corresponding measures $(\mu_x)_{x\in I} \subset \mathcal{M}^{\sigma}$ are not invariant (or quasi-invariant in the sense of \cref{assump.quasinvar}) under the two-dimensional Euler equation and are pairwise mutually singular.
\end{corollary}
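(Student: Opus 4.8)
The plan is to combine the genericity statement of \cref{thm.genericity} with the Feldman--H\'ajek dichotomy for Gaussian measures. Since $\sigma>3$, \cref{thm.genericity}(i) says that $A^{\sigma}$ is open and dense in $h^{\sigma}$, so $U\cap A^{\sigma}$ is a nonempty open subset of $h^{\sigma}$; fix a point $a^{0}=(a^{0}_n)\in U\cap A^{\sigma}$ and a radius $\rho>0$ with $B_{h^{\sigma}}(a^{0},\rho)\subset U\cap A^{\sigma}$. Every sequence in this ball lies in $A^{\sigma}$, hence by \cref{thm.main} the associated measure in $\mathcal{M}^{\sigma}$ is not quasi-invariant (and not invariant) under the flow of~\eqref{eq.eulerVorticity}. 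It therefore suffices to produce an uncountable family of sequences inside $B_{h^{\sigma}}(a^{0},\rho)$ whose associated Gaussian measures are pairwise mutually singular.

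To do this I would first choose distinct frequencies $m_1,m_2,\dots\in\mathbb{Z}^2_+$ of large modulus, so that the tail sum $\sum_{k}\langle m_k\rangle^{2\sigma}|a^{0}_{m_k}|^2$ is as small as we like (possible since $(a^{0}_n)\in h^{\sigma}$), and then pick $\delta_k>0$ with $\sum_k\langle m_k\rangle^{2\sigma}\delta_k^2$ also small, chosen so that the perturbations below keep us inside $B_{h^\sigma}(a^0,\rho)$. Next, fix an uncountable family $(x^{(\theta)})_{\theta\in\Theta}$ of elements of $\{1,2\}^{\mathbb{N}}$ such that for $\theta\neq\theta'$ one has $x^{(\theta)}_k\neq x^{(\theta')}_k$ for infinitely many $k$; such a family of cardinality the continuum exists, e.g.\ the indicator sequences of an almost disjoint family of infinite subsets of $\mathbb{N}$. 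For each $\theta$ define $a^{(\theta)}\in h^{\sigma}$ by $a^{(\theta)}_n=a^{0}_n$ for $n\notin\{\pm m_k\}_k$ and $a^{(\theta)}_{\pm m_k}=\delta_k x^{(\theta)}_k$; this respects $a_{-n}=a_n$ and $a_0=0$, so $\mu_{a^{(\theta)}}\in\mathcal{M}^{\sigma}$, and by the smallness of the tail sums $\|a^{(\theta)}-a^{0}\|_{h^{\sigma}}<\rho$, hence $a^{(\theta)}\in B_{h^{\sigma}}(a^{0},\rho)\subset U\cap A^{\sigma}$ and all these measures are non-(quasi)invariant. The $a^{(\theta)}$ are pairwise distinct, so $I\coloneqq\{a^{(\theta)}:\theta\in\Theta\}\subset U$ is uncountable.

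It remains to check mutual singularity of $\mu_{a^{(\theta)}}$ and $\mu_{a^{(\theta')}}$ for $\theta\neq\theta'$. By \cref{assump.gn} the Fourier coefficients are jointly independent, so both measures are centered Gaussians on $L^2(\mathbb{T}^2,\mathbb{R})$ whose covariance operators are simultaneously diagonalized in the Fourier basis, with eigenvalues proportional to $(a^{(\theta)}_n)^2$ and $(a^{(\theta')}_n)^2$; these agree except on the frequencies $\pm m_k$, where the ratio $(a^{(\theta)}_{m_k})^2/(a^{(\theta')}_{m_k})^2$ equals $1$ when $x^{(\theta)}_k=x^{(\theta')}_k$ and lies in $\{1/4,4\}$ otherwise. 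In particular the Cameron--Martin spaces of the two measures coincide, while the diagonal operator with entries $(a^{(\theta)}_n)^2/(a^{(\theta')}_n)^2-1$ is not Hilbert--Schmidt, since infinitely many entries have modulus $\geqslant 3/4$; by the Feldman--H\'ajek theorem the two measures are therefore not equivalent, hence $\mu_{a^{(\theta)}}\perp\mu_{a^{(\theta')}}$. (Equivalently, one may apply Kakutani's dichotomy to the product measure obtained in the Fourier coordinates: the Hellinger affinity of the $m_k$-th factors is a fixed constant $<1$ for the infinitely many $k$ with $x^{(\theta)}_k\neq x^{(\theta')}_k$, so the infinite product of affinities vanishes.) This proves the corollary. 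The argument is essentially bookkeeping; the only point requiring a little care is calibrating the perturbation $(\delta_k)$ so that it is small enough in $h^{\sigma}$ to stay inside $U\cap A^{\sigma}$ yet leaves the relative change of infinitely many Fourier variances bounded away from $1$, which is exactly what the rapid decay of $(a^{0}_n)$ at high frequency provides.
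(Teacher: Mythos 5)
Your proof is correct, but it takes a genuinely different route from the paper. You localize inside $U$ by intersecting with the open dense set $A^{\sigma}$ from \cref{thm.genericity}, then build a \emph{deterministic} uncountable family by re-assigning the coefficients at a sparse set of high frequencies $\pm m_k$ to values $\delta_k x^{(\theta)}_k$ indexed by an almost disjoint family, so that any two members differ at infinitely many modes with variance ratio in $\{1/4,4\}$; \cref{thm.feldmanHajek} (equivalently Kakutani's dichotomy on the product structure) then gives pairwise mutual singularity directly. The paper instead perturbs a single sequence with $\gamma\neq 0$ \emph{randomly}, setting $b^{\xi}_n=a_n\bigl(1+\tfrac{\varepsilon^{\xi}_n}{2\langle n\rangle}\bigr)^{1/2}$ for i.i.d.\ signs $\varepsilon^{\xi}_n$: it keeps $\gamma\neq 0$ on a positive-measure event via the identity $\mathbb{E}[\gamma_{s,(b^{\xi}_n)}]=\gamma_{s,(a_n)}$ (using independence and $\beta_{n,n}=0$) rather than via openness of $A^\sigma$, and it establishes singularity of $\mu_{\xi}$ against a fixed reference $\mu_{\xi_1}$ almost surely by combining the Feldman--H\'ajek criterion with Kolmogorov's three-series theorem, upgrading to an uncountable pairwise singular family only at the end by a contradiction argument. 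Your construction buys simplicity and directness: pairwise singularity is immediate (no three-series theorem, no final diagonal/contradiction step), and the containment in an arbitrary open set $U$ is handled explicitly, which the paper's proof leaves implicit. The paper's probabilistic route has the advantage of not needing the openness of $A^{\sigma}$ at all — only a single sequence with $\gamma\neq 0$ plus the expectation identity — which makes it more robust if one only knows non-degeneracy at one point. Both arguments rest on the same two pillars, \cref{thm.main} for non-(quasi)-invariance and the Feldman--H\'ajek dichotomy for singularity, so the difference is in the combinatorics of producing the uncountable family.
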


One obvious disadvantage of genericity results is that they provide no specific examples.
However, we are able to provide some explicit examples of non-invariant (and invariant) measures.
\begin{theoremBis}[Examples of non-invariant measures]\label{thm.examples}\phantom{}
\begin{enumerate}
    \item[(i)] Let $\mu=\mu_{(a_n)}$, where the sequence $(a_n)_{n\in\mathbb{Z}^2}$ has compact support and satisfies \cref{assump.an}. Then $\mu$ is invariant under the flow of~\eqref{eq.eulerVorticity} if and only if $\operatorname{supp}(a_n)$ is included in a line of $\mathbb{Z}^2$ passing through the origin, or included in a circle centered at the origin. In the case where $\mu$ is not invariant, it is also non-quasi-invariant in the sense of \cref{assump.quasinvar}. \label{thm.exaplesI}
    \item[(ii)] Let $(a_n)_{n\in\mathbb{Z}^2}$ defined by $a_0=0$ and $a_n = \frac{1}{\langle n\rangle^5 \log (3+\langle n\rangle ^2)}$ for $n\neq 0$. Then the measure $\mu = \mu_{(a_n)} \in \mathcal{M}_{\operatorname{rad}}^4$ satisfies $\gamma_{s,(a_n)} > 0$ for some $s \in (0,1)$ and is therefore non-quasi-invariant (in the sense of \cref{assump.quasinvar}) under the flow of~\eqref{eq.eulerVorticity}. In particular $\mu$ is not invariant  under the flow of~\eqref{eq.eulerVorticity}\ref{assump.quasinvar}). \label{thm.exaplesII}
\end{enumerate}
\end{theoremBis}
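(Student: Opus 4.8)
The plan is to treat the two parts separately; both reduce to \cref{thm.main}, except for the ``if'' direction of~(i), which is a direct steady‑state argument. For that direction: if $\operatorname{supp}(a_n)\subseteq\mathbb Z v$ for some $v\in\mathbb Z^2\setminus\{0\}$, then $\mu$‑a.e.\ initial datum is $\Omega_0^\omega=F(v\cdot x)$, and taking its stream function to be $G(v\cdot x)$ (possible since $\Omega_0^\omega$ has zero mean) gives $u_0=v^\perp G'(v\cdot x)$, so that $u_0\cdot\grad\Omega_0^\omega=(v^\perp\cdot v)G'F'=0$ and $\Omega_0^\omega$ is stationary; if instead $\operatorname{supp}(a_n)$ lies on the circle $\{|n|=R\}$, then $\mu$‑a.s.\ $-\Delta\Omega_0^\omega=R^2\Omega_0^\omega$, whence $u_0=R^{-2}\grad^\perp\Omega_0^\omega$ and again $u_0\cdot\grad\Omega_0^\omega=0$. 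In either case $\Phi_t$ fixes $\mu$‑a.e.\ point, so $(\Phi_t)_*\mu=\mu$.

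For the ``only if'' direction of~(i): since $(a_n)$ has compact support, $\mu\in\mathcal M^\sigma$ for every $\sigma>3$ and $\gamma_s=\gamma_{s,(a_n)}$ is a finite real combination of the functions $s\mapsto\langle m\rangle^{2s}$, so by \cref{thm.main} it suffices to exhibit a single $s>0$ with $\gamma_s\ne0$ (then pick $\sigma>s+3$). I would read this off the leading exponential. Writing $S=\operatorname{supp}(a_n)$ and $M=\max_{m\in S}|m|$, and grouping the terms of~\eqref{eq.gamma} and \eqref{eq.beta} by which of $\langle n\rangle^{2s},\langle q\rangle^{2s},\langle n+q\rangle^{2s}$ they carry, the contribution of a pair $(n,q)$ to the coefficient of $\langle n+q\rangle^{2s}$ equals $\tfrac1{(2\pi)^4}|a_n|^2|a_q|^2\tfrac{(q\cdot n^\perp)^2}{2}\bigl(\tfrac1{|n|^2}-\tfrac1{|q|^2}\bigr)^2\ge0$, a perfect square times nonnegative factors. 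The geometric ingredient is the elementary claim that \emph{if $S\subset\mathbb Z^2\setminus\{0\}$ is finite with $S=-S$ and lies on no line through the origin and no circle about the origin, then there exist $n,q\in S$ with $n\nparallel q$, $|n|\ne|q|$ and $|n+q|>M$}: indeed, otherwise, fixing $n_0\in S$ with $|n_0|=M$, any $q\in S$ with $q\nparallel n_0$, $|q|\ne M$ would satisfy both $|n_0+q|\le M$ and $|n_0-q|\le M$, contradicting $|n_0+q|^2+|n_0-q|^2=2M^2+2|q|^2$; the ensuing dichotomy ``$q\parallel n_0$ or $|q|=M$'' forces $S$ onto a line or a circle. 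Granting this, $\rho_*:=\max\{|n+q|:n,q\in S,\,n\nparallel q,\,|n|\ne|q|\}>M$, so no $\langle n\rangle^{2s}$‑ or $\langle q\rangle^{2s}$‑slot reaches frequency $\rho_*$, and hence the coefficient of $\langle\rho_*\rangle^{2s}$ in $\gamma_s$ is a \emph{strictly positive} sum of squares. Therefore $\gamma_s\to+\infty$ as $s\to+\infty$, in particular $\gamma_s\ne0$ for all large $s$; \cref{thm.main} then gives non‑(quasi‑)invariance, which also yields the last sentence of~(i).

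For part~(ii): with $|a_n|^2=\langle n\rangle^{-10}(\log(3+\langle n\rangle^2))^{-2}$ one checks $(a_n)\in h^4$ (the exponent $5$ together with the logarithm is exactly what lets this borderline series converge) and that~\eqref{eq.gamma} converges absolutely and is real‑analytic on a neighbourhood of $[0,1]$, with $\gamma_0=0$ since $\beta_{n,q}$ vanishes at $s=0$. Using the bijections $(n,q)\mapsto(n,-(n+q))$ and $(n,q)\mapsto(-(n+q),q)$ and the antisymmetries of $\beta_{n,q}$, I would recast $\gamma_s$ in the symmetric form
\[
\gamma_s=\frac1{6(2\pi)^4}\sum_{\substack{k_1+k_2+k_3=0\\k_i\in\mathbb Z^2}}(k_1\cdot k_2^\perp)^2\,G_s(k_1,k_2,k_3)\,H(k_1,k_2,k_3),
\]
with $G_s$ and $H$ (up to sign) the $3\times3$ determinants with rows $(1,1,1),(\langle k_i\rangle^{2s})_i,(|k_i|^{-2})_i$ and $(1,1,1),(|a_{k_i}|^2)_i,(|a_{k_i}|^2|k_i|^{-2})_i$ respectively --- each a second divided difference. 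Since $t\mapsto(1+1/t)^s$ is convex for $s>0$, $\operatorname{sign}G_s$ depends only on the ordering of $|k_1|,|k_2|,|k_3|$; and since for this particular $(a_n)$ the one‑variable map $|a_k|^2\mapsto|a_k|^2|k|^{-2}$ is convex --- which reduces to $(\log A)''\le((\log A)')^2$ on $(0,1]$ for $A(x)=(1+1/x)^{-5}(\log(4+1/x))^{-2}$, true because $\log A$ is concave there (its derivative is a sum of two decreasing terms) --- $\operatorname{sign}H$ depends on that ordering in the same way. Hence $G_sH\ge0$, so $\gamma_s\ge0$ on $(0,1)$, and in fact $\gamma_s>0$ because a single nondegenerate triple with distinct moduli (e.g.\ $(1,0),(0,2),(-1,-2)$) contributes strictly; \cref{thm.main} with $\sigma=4$ then applies.

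The main obstacle in~(i) is the planar‑geometry lemma together with the observation that the leading coefficient of $\gamma_s$ is a sum of squares --- this is what precludes cancellation. In~(ii) it is the sign of $\gamma_s$: the individual summands of~\eqref{eq.gamma} are sign‑indefinite, so the divided‑difference/convexity reduction above is essential, and it is precisely the numerology of $a_n=\langle n\rangle^{-5}(\log(3+\langle n\rangle^2))^{-1}$ that simultaneously secures $(a_n)\in h^4$ and the convexity condition on $H$; should the latter be delicate to verify on all of $(0,1]$, one can instead show $\partial_s\gamma_s|_{s=0}>0$ by bounding its indefinite cross terms against the positive ``$\langle n+q\rangle^{2s}$'' diagonal piece using the rapid decay of $(a_n)$.
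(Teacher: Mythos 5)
Your overall strategy --- reduce everything to \cref{thm.main} by exhibiting a single $s>0$ with $\gamma_{s,(a_n)}\neq 0$ --- is also the paper's, but both of your key steps take genuinely different routes. For part (i), the paper assumes $\gamma_s\equiv 0$ on an interval of $s$, differentiates in $s$, and uses polynomial interpolation in $2\log\langle k\rangle$ to isolate exactly the quantity you isolate by sending $s\to\infty$: the coefficient of the top frequency, coming only from the $\langle n+q\rangle^{2s}$ slots with $|n+q|>M$, which is a sum of squares (compare \eqref{eq.degenerate}). Your large-$s$ asymptotic is cleaner and equally valid, since for compactly supported $(a_n)$ the quantity $\gamma_s$ is a finite combination of distinct exponentials $\langle m\rangle^{2s}$. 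You also supply the ``if'' direction (shear and Laplace-eigenfunction data are stationary), which the paper leaves implicit; that argument is correct. However, your geometric lemma has a genuine, if small, gap: the dichotomy ``every $q\in S$ is parallel to $n_0$ or has $|q|=M$'', derived from a \emph{single} maximal-modulus element $n_0$, does not by itself force $S$ onto a line or a circle. For instance $S=\{\pm(1,0),\pm(M,0),\pm(0,M)\}$ with $M\geq 2$ satisfies the dichotomy for $n_0=(M,0)$ yet lies on no line through the origin and no circle about it. The repair is the paper's case split: if $C_M=\{n\in S:|n|=M\}$ equals $\{\pm n_0\}$, the alternative $|q|=M$ collapses into $q\parallel n_0$ and $S$ lies on a line; if $|C_M|>2$, apply your dichotomy to two non-parallel maximal elements to force $|q|=M$ for every $q\in S$.

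For part (ii) your route is entirely different from the paper's, which is computer-assisted: the paper fixes $s=\tfrac12$, evaluates the partial sum over $|n|,|q|<30$ by interval arithmetic, and bounds the tail by hand. Your symmetrization of \eqref{eq.gamma} over triples $k_1+k_2+k_3=0$, writing each orbit as $(k_1\cdot k_2^{\perp})^2$ times a product of two second divided differences, is structurally correct (the three pairings permute the columns of the determinant underlying $\beta_{n,q}$ cyclically), and both convexity claims check out: $t\mapsto(1+1/t)^s$ has second derivative $s(1+1/t)^{s-2}t^{-4}(s+1+2t)>0$, and $(\log A)'(x)=\tfrac{5}{x(x+1)}+\tfrac{2}{x(4x+1)\log(4+1/x)}$ is a sum of two positive decreasing functions on $(0,1]$ (the second because $x(4x+1)\log(4+1/x)$ has derivative $(8x+1)\log(4+1/x)-1>0$), so $(\log A)''\leq 0\leq((\log A)')^2$. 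Since $|a_k|^2$ and $|k|^{-2}$ are both strictly decreasing in $|k|$, the two Vandermonde factors carry the same sign, every orbit contributes with a fixed sign, and strict positivity follows from any nondegenerate triple with distinct moduli. Provided you pin down the overall sign (which you wave away with ``up to sign'') and record the absolute convergence needed to justify the rearrangement (which holds since $(a_n)\in h^{s+1}$ for $s<1$), this gives the stronger conclusion $\gamma_s>0$ for all $s\in(0,1)$ and eliminates the interval-arithmetic computation entirely --- a genuine improvement on the paper's argument.
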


\begin{remark}
It was proven in \cite{elgindiHuSverak} that the only stationary solutions of~\eqref{eq.eulerVorticity} with compactly supported Fourier transforms are the ones whose Fourier transform support is contained in a line in $\mathbb{Z}^2$ or in a circle of $\mathbb{Z}^2$ centered at the origin. \cref{thm.examples}, Part (i) can be considered an extension of this result to invariant measures. 
\end{remark}

\begin{remark}
The results we prove suggests a difference with the nonlinear dispersive equation setting, for which many quasi-invariance results are known, see e.g. \cites{forlanoSeong22,planchonTzvetkovVisciglia,tzvetkov,OhSosoeTzvetkov18}. In our setting, we conjecture stronger non-quasi-invariance results below (\cref{conj1} and \cref{conj2}). 
To the best of the authors knowledge, non quasi-invariance of (sufficiently regular) Gaussian measures has only been proven for one example of non-dispersive ODE, namely $\partial_t u = |u|^2u$ posed on the torus, which is explicitly solvable, see~\cite{OhSosoeTzvetkov18}*{Theorem 1.6}. It would be interesting to further investigate the possible non-quasi-invariance of Gaussian measures. It seems however that the method used in~\cite{OhSosoeTzvetkov18} is difficult to implement for the Euler equations.
\end{remark}

We end with a few conjectures.
The first conjecture is that Part (i) of \cref{thm.examples} already contains basically all of the counter-examples to non-invariance. 
Notice that \cref{conj1} is specifically relating to Gaussian measures for which the Fourier modes are mutually independent. It is possible that one could construct a variety of invariant Gaussian measures which are of a very different nature (\textit{i.e.\ }supported on quasi-periodic motions). These should also be `non-generic' in some sense, but it is less clear how to characterize this at the current moment.

\begin{conjecture} \label{conj1} Let $\sigma > 0$ (or possibly even $\sigma > -1$ if a suitable local well-posedness theory can be established). A measure $\mu = \mu_{(a_n)} \in \mathcal{M}^{\sigma}$ is quasi-invariant in the sense of \cref{assump.quasinvar} under the flow of~\eqref{eq.eulerVorticity} if and only the sequence $(a_n)_{n \in \mathbb Z^2}$ is supported in a line of $\mathbb{Z}^2$ passing through the origin or on a circle centered at the origin. 
\end{conjecture}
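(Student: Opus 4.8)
The plan is to treat the two implications of \cref{conj1} separately; the forward (``if'') direction is elementary, and essentially all of the difficulty lies in the converse. For the ``if'' direction, suppose first that $(a_n)$ is supported on a line through the origin, that is, on $\mathbb{Z}^2\cap\mathbb{R}v$ for some $v$. Then almost surely $\Omega_0^\omega$ and $\psi_0\coloneqq(-\Delta)^{-1}\Omega_0^\omega$ depend only on the scalar variable $v\cdot x$, so their gradients are everywhere parallel (to $v$) and the transport term $\grad^\perp\psi_0\cdot\grad\Omega_0^\omega$ vanishes identically. If instead $(a_n)$ is supported on a circle $\{|n|^2=R^2\}$, then $(-\Delta)^{-1}\Omega_0^\omega=R^{-2}\Omega_0^\omega$, so the transport term is $R^{-2}\grad^\perp\Omega_0^\omega\cdot\grad\Omega_0^\omega\equiv 0$. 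In both cases $\Omega_0^\omega\in H^\sigma\subset C^1$ is almost surely a classical stationary solution of~\eqref{eq.eulerVorticity}, hence by uniqueness $\Phi_t$ acts as the identity on $\operatorname{supp}\mu$ and $\mu$ is invariant; this is the mechanism already exploited in \cref{thm.examples}(i).

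For the converse, suppose $(a_n)\in h^\sigma$ is \emph{not} supported on such a line or circle. When $\sigma>3$, \cref{thm.main} reduces the claim to exhibiting some $s\in(0,\sigma-3)$ with $\gamma_{s,(a_n)}\neq 0$, so assume for contradiction that $\gamma_{s,(a_n)}=0$ throughout that interval. Grouping the series~\eqref{eq.gamma} according to the common value $r=|m|^2$ of the frequency $m$ carrying the factor $\langle m\rangle^{2s}$ in~\eqref{eq.beta} (as $m=n$, $m=q$, or $m=n+q$) writes $\gamma_{s,(a_n)}=\sum_{r}(1+r)^{s}c_r$, the regrouping being absolutely convergent on the interval because $(a_n)\in h^\sigma$; uniqueness for such generalized Dirichlet series then forces $c_r=0$ for every $r$. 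A direct computation from~\eqref{eq.beta} shows that, for a value $r$ attained by no support frequency (so that $a_m=0$ whenever $|m|^2=r$, and such an $m$ can occur only as an interaction frequency $n+q$), the corresponding coefficient is $c_r=\frac{1}{2(2\pi)^4}\sum_{|m|^2=r}\ \sum_{n+q=m}|a_n|^2|a_q|^2\,(q\cdot n^\perp)^2\big(\tfrac{1}{|n|^2}-\tfrac{1}{|q|^2}\big)^2$, a sum of \emph{nonnegative} terms. Hence each such term vanishes, so every pair $n,q\in\operatorname{supp}(a_n)$ whose sum $n+q$ is strictly longer than every support frequency is collinear with the origin or equidistant from it. An elementary argument then shows that if \emph{every} pair of support points is collinear with the origin or equidistant from it, the support lies on a line through the origin or on a single circle centered at it — the desired contradiction, \emph{provided} the vanishing can be propagated from these ``extremal'' pairs to all pairs.

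That propagation is the main obstacle. For compactly supported $(a_n)$ with $\sigma>3$ the conclusion is already \cref{thm.examples}(i), and one expects to argue by a descending induction on $|n+q|$: at each level one isolates the contribution of the frequency $m$ to $c_{|m|^2}$, separating its ``self'' role (as $n$ or $q$, where $\langle m\rangle^{2s}$ is weighted by the sign-indefinite coefficients appearing in~\eqref{eq.beta}) from its ``interaction'' role, and uses the interaction vanishing already established at higher levels; finiteness of the support makes the process terminate. When $\operatorname{supp}(a_n)$ is unbounded — the genuinely new situation — there is no maximal interaction frequency, the induction has no base case, and one must instead exclude delicate \emph{infinite} cancellations between the self- and interaction-contributions to the $c_r$. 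We expect this to require a quantitative non-degeneracy statement for $\gamma_{s,(a_n)}$, obtained perhaps by a truncation/approximation scheme (estimating $\gamma_{s,(a_n)}$ against $\gamma_{s,(a_n\,\mathbf 1_{|n|\leqslant N})}$ with controlled errors) or by a more structural analysis of the linear system $\{c_r=0\}$; this is the heart of the matter.

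For the remaining regularities $0<\sigma\leqslant 3$ (and, conjecturally, $\sigma>-1$) the obstruction lies further upstream: the proof of \cref{thm.main} loses three derivatives in justifying the expansion $\EE[\|\Omega(t)\|_{H^s}^2]=\EE[\|\Omega_0\|_{H^s}^2]+\gamma_s t^2+\mathcal{O}(t^3)$, which is why it requires $s\in(0,\sigma-3)$. To reach lower regularity one would re-derive this expansion while exploiting the probabilistic structure of $\Omega_0^\omega$: its error terms are multilinear in the i.i.d.\ Gaussian coefficients and should admit probabilistic smoothing gains (in the spirit of Bourgain and Burq--Tzvetkov), to be closed against the \textit{a priori} identity $\EE_{\mu_t}[\|\Omega\|_{H^\sigma}^2]=\EE_{\mu}[\|\Omega\|_{H^\sigma}^2]$ provided by \cref{assump.quasinvar} together with its time-reversed form (\cref{rem.timeReverse}). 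For $\sigma\leqslant 1$, and a fortiori for negative $\sigma$, one must moreover first construct a sufficiently robust (probabilistic) local solution theory so that $\Phi_t$ is even defined on $\operatorname{supp}\mu$ — a point flagged in the statement of \cref{conj1} itself, and where the residual analytic difficulty resides.
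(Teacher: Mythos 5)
The statement you are proving is \cref{conj1}, which the paper explicitly leaves open: there is no proof in the paper to compare against, and your proposal does not close the gap either --- indeed you flag the two essential missing steps yourself. The ``if'' direction is fine in substance: data supported on a line through the origin or on a circle centered at the origin are almost surely stationary (shear flows, respectively Laplacian eigenfunctions), so the flow fixes $\operatorname{supp}\mu$ and $\mu$ is invariant, hence quasi-invariant since $\mu(A)\leqslant\mu(A)^{(1+|t|^{\eta})^{-\alpha}}$. (One small error: $H^{\sigma}(\mathbb{T}^2)\not\subset C^1$ for $\sigma\leqslant 2$; you should instead invoke the almost-sure $C^{0,\alpha}$ regularity of the randomized data noted in the paper, together with uniqueness in the Yudovich/H\"older class, to make the stationarity rigorous at low $\sigma$.)

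The converse is where the proof genuinely fails to exist. First, for $\sigma>3$ your reduction to $\gamma_{s,(a_n)}\neq 0$ via \cref{thm.main} is correct, and your Dirichlet-series regrouping $\gamma_{s,(a_n)}=\sum_r(1+r)^s c_r$ is a clean reformulation of the paper's polynomial trick in the proof of \cref{thm.examples}(i); the observation that $c_r$ is a sum of nonnegative interaction terms whenever $r$ is \emph{not} attained by a support frequency is exactly the mechanism of~\eqref{eq.degenerate}. But when $\operatorname{supp}(a_n)$ is unbounded, every level $r=|n+q|^2$ may be attained by support frequencies, in which case $c_r$ mixes the nonnegative interaction contribution with the sign-indefinite self-contributions of the modes $m$ with $|m|^2=r$, and one can no longer conclude that individual pairs $(n,q)$ are degenerate. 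You name this obstruction but offer only the hope of a ``quantitative non-degeneracy statement''; no argument is given, and it is precisely the content of the conjecture. Second, the entire range $0<\sigma\leqslant 3$ is untouched: \cref{thm.main} is simply unavailable there because \cref{prop.estW} loses three derivatives, and the probabilistic-smoothing programme you sketch for re-deriving the $\gamma t^2$ expansion at low regularity is not carried out. So the proposal is a reasonable research plan consistent with the paper's own discussion, but it is not a proof of \cref{conj1}.
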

\begin{remark}
Proving non-invariance (rather than non-quasi-invariance) might be a much easier task. 
\end{remark}
\begin{remark}
Note that for $\sigma \in (0,1)$, the random initial data \eqref{eq.typicalGaussianData} is almost surely $W^{\sigma,p}$ for all $p < \infty$, and in particular, the initial conditions are almost surely $C^{0,\alpha}$ for some $\alpha> 0$ (see e.g. \cite{Kahane}). 
Hence, these initial data lie in a classical local well-posedness space. For $\sigma=0$ the initial conditions are almost-surely in $L^2 \cap BMO \setminus L^\infty$ \cite{Kahane}*{Chapter 5} however, we are unsure if it lies in a local well-posedness class (though, we believe it likely does; see \cite{Vishik99} for well-posedness results in borderline regularity spaces). For $\sigma < 0$, initial conditions are almost-surely distribution valued and we are not aware of any theory that would even provide local existence of solutions. 
\end{remark}

\begin{remark} In the recent work~\cite{dS22}, the author constructs a large set of smooth initial data (random Gaussian initial data of the form~\eqref{eq.typicalGaussianData}), which give rise to global solutions to the Euler equations posed on the torus of size $L$. It is proven that in the large box limit $L \gg 1$, the Wick formula remains approximately true. The setting considered in~\cite{dS22} differs from ours, and is closely connected to a large-box effect. Note also that it could be the case that the evolution of a Gaussian measure under the Euler flow remains Gaussian, but that this evolution is not \textit{quasi-invariant}, therefore not contradicting \cref{conj1}. 
\end{remark}

As our article will make it clear, the proof of non-invariance does not require $\gamma_{s,(a_n)} > 0$, but only that $\gamma_{s,(a_n)} \neq 0$.
The proof suggests (but does not quite prove) that the $H^s$ norm of the randomly initialized data grow on average for short time if $\gamma_{s,(a_n)} > 0$, but if $\gamma_{s,(a_n)} < 0$ it is actually decaying.
We are not aware of any example of sequence $(a_n)_{n\in\mathbb{Z}^2}$ such that $\gamma_{s,(a_n)} < 0$.
Specifically, we conjecture that such examples do not exist in $\mathcal{M}^\sigma$ and that data randomly initialized in this manner will grow in $H^s$ on average.
We are unsure under what conditions one could conjecture that short-time norm growth will happen almost-surely.  

\begin{conjecture} \label{conj2}
    Let $\sigma > 0$. If $(a_n)_{ n \in \mathbb Z^2} \in h^{\sigma}$ is not exclusively supported in a line passing through the origin or a circle centered at the origin, then for initial data sampled using \eqref{eq.typicalGaussianData} there exists $C(s)>0$ such that there holds
    \begin{align*}
    \EE \norm{\Omega^\omega (t)}_{H^s} \geqslant \EE \norm{\Omega_0^\omega}_{H^s} + Ct^2. 
    \end{align*}
    for all $t \in [0,t_0)$ and all $s\in(0,\sigma]$, where $t_0=t_0((a_n),s)$.
\end{conjecture}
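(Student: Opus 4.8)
The plan is to establish a rigorous short-time Taylor expansion of the expected norm $t\mapsto\EE\norm{\Omega^{\omega}(t)}_{H^s}$ itself, in the spirit of the expansion of $\EE\norm{\Omega^{\omega}(t)}_{H^s}^2$ that underlies \cref{thm.main}, and then to extract positivity of its quadratic-in-$t$ coefficient. Write the solution as $\Omega(t)=\Omega_0+tZ+\tfrac{t^2}{2}W+R_3(t)$, where, using incompressibility, $Z=\partial_t\Omega|_{t=0}=-\diver(u_0\Omega_0)=-u_0\cdot\grad\Omega_0$, $W=\partial_t^2\Omega|_{t=0}$ is obtained by differentiating~\eqref{eq.eulerVorticity} once more, and $R_3(t)=\int_0^t\tfrac{(t-\tau)^2}{2}\partial_\tau^3\Omega(\tau)\,\dee\tau$. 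Since $(a_n)\neq 0$ we have $\Omega_0\neq 0$ almost surely, so one may expand $\norm{\cdot}_{H^s}$ around $\Omega_0$. Under the sign flip $(r_n,s_n)\mapsto(-r_n,-s_n)$, which preserves \cref{assump.gn} and hence the law of the data, $\Omega_0\mapsto-\Omega_0$, $Z\mapsto Z$ and $W\mapsto -W$; hence the $\mathcal{O}(t)$-integrand $\langle\Omega_0,Z\rangle_{H^s}/\norm{\Omega_0}_{H^s}$ is odd and has zero expectation, whereas the $\mathcal{O}(t^2)$-integrand is invariant and its expectation generally does not vanish. This gives
\[
    \EE\norm{\Omega^{\omega}(t)}_{H^s}=\EE\norm{\Omega_0^{\omega}}_{H^s}+\delta_s\,t^2+\mathcal{O}(t^3),\qquad
    \delta_s=\frac12\,\EE\!\left[\frac{\norm{Z}_{H^s}^2+\langle\Omega_0,W\rangle_{H^s}}{\norm{\Omega_0}_{H^s}}-\frac{\langle\Omega_0,Z\rangle_{H^s}^2}{\norm{\Omega_0}_{H^s}^3}\right],
\]
which is well defined because $\norm{\Omega_0}_{H^s}^{-p}$ has finite moments of every order (Gaussian small-ball estimates, using that infinitely many $a_n$ are nonzero). \cref{conj2} then reduces to two things: (a) $\delta_s>0$ for all $s\in(0,\sigma]$ whenever $(a_n)$ is not supported in a line through the origin or a circle centered at $0$; and (b) a uniform-in-$\omega$ control of $R_3(t)$ on a fixed interval $[0,t_0]$ justifying the $\mathcal{O}(t^3)$ remainder after taking expectations.

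For step (a), the natural first observation is that the Wick/isometry computation behind \cref{thm.main} gives exactly $\EE\bigl[\norm{Z}_{H^s}^2+\langle\Omega_0,W\rangle_{H^s}\bigr]=\gamma_{s,(a_n)}$ of~\eqref{eq.gamma}; so $\delta_s$ is governed by $\gamma_{s,(a_n)}$ up to the manifestly non-positive correction coming from the $-\langle\Omega_0,Z\rangle_{H^s}^2/\norm{\Omega_0}_{H^s}^3$ term, and the essential point is to upgrade ``$\gamma_{s,(a_n)}\neq 0$'' --- which is all \cref{thm.main} needs --- to the strict inequality ``$\gamma_{s,(a_n)}>0$'', with equality precisely in the two degenerate configurations. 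After symmetrizing the summand of~\eqref{eq.gamma} in $n\leftrightarrow q$ (it is already symmetric, since $\bigl(\tfrac{1}{|n|^2}-\tfrac{1}{|q|^2}\bigr)$ and $\beta_{n,q}$ are both antisymmetric), one is left with a sum over unordered pairs $\{n,q\}$ weighted by $|a_n|^2|a_q|^2\geqslant 0$; the remaining geometric factor involves $(q\cdot n^{\perp})^2$ (vanishing iff $n\parallel q$) and $\beta_{n,q}$ (vanishing iff $\langle n\rangle=\langle q\rangle$), which are exactly the line and circle degeneracies. The difficulty is that this geometric factor does \emph{not} have a fixed sign --- one can find lattice triples $\{n,q,n+q\}$ with $|n|<|n+q|<|q|$ for which it is negative --- so the positivity cannot be proved term by term. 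Proving $\gamma_{s,(a_n)}>0$ (and then $\delta_s>0$) therefore seems to require recognizing $\gamma_{s,(a_n)}$, or better $\delta_s$, as the expectation of a manifestly non-negative quantity that is non-degenerate exactly off the line/circle locus, or a resummation of triads in which the positive contributions provably dominate; \cref{thm.examples}(i), which settles the compactly supported case, should provide the template for the bookkeeping.

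For step (b), since each time derivative in~\eqref{eq.eulerVorticity} costs a spatial derivative, a naive estimate $\norm{\partial_\tau^3\Omega(\tau)}_{H^s}\lesssim P(\norm{\Omega(\tau)}_{H^{s+3}})$ requires $\Omega_0\in H^{s+3}$, which for $\sigma>3$ confines one to $s<\sigma-3$ --- exactly the loss in \cref{thm.main} --- so reaching all $s\in(0,\sigma]$, let alone every $\sigma>0$, demands exploiting the transport/commutator structure to avoid the derivative loss, and for $\sigma\leqslant 1$ it additionally requires a low-regularity local well-posedness theory that is not presently available (cf.\ the remarks following \cref{conj1}). One must also make the remainder bound uniform in $\omega$ on a fixed $[0,t_0]$ despite the at-worst-double-exponential dependence of the 2D Euler bounds on $\norm{\Omega_0}_{H^\sigma}$; for $\sigma>1$ this is manageable because, for fixed small $t$, the relevant growth factor $\exp(Ct\norm{\Omega_0}_{L^\infty}\log\norm{\Omega_0}_{H^\sigma})$ has sub-Gaussian tails under the Gaussian law of the data and hence finite expectation, so one may differentiate under $\EE$ --- but this is moot until the regularity-loss obstruction is removed. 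I expect step (a), specifically the strict positivity $\gamma_{s,(a_n)}>0$ (equivalently $\delta_s>0$) off the line/circle locus, to be the decisive obstacle: it is open already in the most favorable regularity regime and is essentially the same phenomenon as the conjectured generic creation of small scales, which is why the statement is posed as \cref{conj2} rather than proved.
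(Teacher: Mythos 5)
The statement you are asked to prove is \cref{conj2}, which the paper explicitly leaves open: there is no proof in the paper to compare against, and your proposal, to its credit, does not pretend to close it. Your reduction is essentially the one the paper's own machinery suggests: with $Z=-B_1(\Omega_0)$ and $W=2B_2(\Omega_0)$ your expansion matches the ansatz~\eqref{eq.ansatz}, your symmetry argument for the vanishing of the $\mathcal{O}(t)$ term is a correct substitute for the Wick computation at the end of the proof of \cref{prop.orthogonality}, and the identity $\EE[\norm{Z}_{H^s}^2+\langle\Omega_0,W\rangle_{H^s}]=\gamma_{s,(a_n)}$ is exactly \cref{lemm.computation1,lemm.computation2}. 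You also correctly isolate the two genuine obstructions: strict positivity of the quadratic coefficient off the line/circle locus, and control of the remainder without losing three derivatives. So the assessment is accurate; what follows are the places where your partial claims are themselves optimistic.

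First, even granting $\gamma_{s,(a_n)}>0$, your $\delta_s>0$ does not follow: the expectation defining $\delta_s$ weights the Wick-computable numerator by the random factor $\norm{\Omega_0}_{H^s}^{-1}$ (so it is not $\gamma_{s,(a_n)}/(2\,\EE\norm{\Omega_0}_{H^s})$), and the cross term $-\langle\Omega_0,Z\rangle_{H^s}^2/\norm{\Omega_0}_{H^s}^3$ is strictly negative with positive expectation whenever $B_1(\Omega_0)\not\perp\Omega_0$ in $H^s$; one would need a quantitative comparison between these two non-Gaussian expectations, which is an additional layer beyond the (already open) sign question for $\gamma_{s,(a_n)}$. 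Second, your step (b) understates the difficulty: the paper's remainder bound (\cref{prop.estW}) is not a pure local-in-time estimate but leans on the a priori globalization bound of \cref{prop.globalBound}, which is only available \emph{under} the invariance or quasi-invariance hypothesis being refuted in \cref{thm.main}; here no such hypothesis is available, and the deterministic short-time growth factor for 2D Euler is double-exponential in $t\norm{\Omega_0}_{L^\infty}$ (cf.~\eqref{eq.growthYudovich}), which for fixed $t>0$ is \emph{not} integrable against the Gaussian law of the data, contrary to your sub-Gaussian-tails claim. A truncation of the data at height $R(t)\to\infty$ as $t\to 0$ can plausibly repair this, but it must be set up carefully so that the truncated contribution to $\EE\norm{\Omega^\omega(t)}_{H^s}$ is $o(t^2)$. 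Both issues are consistent with the statement being posed as a conjecture rather than a theorem.
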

Of course, we also believe the weaker version of \v{S}ver\'{a}k's conjecture:
\begin{align*}
\lim_{t \to \infty} \EE \norm{\Omega^\omega(t)}_{H^s} = +\infty, 
\end{align*}
and even the original version: that the random orbits $\set{\Omega^\omega(t)}_{t \in \mathbb R}$ are almost-surely not pre-compact in $L^2$. 
However, these stronger conjectures seem very far out of reach at the current time, whereas \cref{conj1} and \cref{conj2} seem likely to be manageable. 

\subsection{Related literature and motivation for the results and conjectures} \label{sec:LitMot}

\subsubsection{Creation of small scales in 2D Euler}\label{sec.introGrowth}

As mentioned in the introduction, it is classical that~\eqref{eq.eulerVorticity} is well-posed in $\mathcal{C}^0([0,\infty),H^{\sigma}(\mathbb{T}^2))$ as soon as $\sigma>1$. Moreover, in~\cite{yudovich} it was proven that for all $t \geqslant 0$: 
\begin{equation}
    \label{eq.growthYudovich}
    \|\Omega(t)\|_{H^{\sigma}} \leqslant C(\Omega_0)e^{e^{Ct}} 
\end{equation}
(a similar double-exponential bound for $C^{0,\alpha}$ was proved earlier by Wolibner \cite{Wolibner33}). 
Optimality of the bound~\eqref{eq.growthYudovich} in generality remains an open question.
On $\mathbb T^2$, the examples of gradient growth are generally based on the Bahouri-Chemin cross~\cite{bahouriChemin}, which uses a hyperbolic point in the flow to drive rapid growth of the gradient at a single point.  
Exponential growth can be obtained~\cite{zlatos}, and double exponential growth can be maintained on finite time~\cite{denisov}.
If one replaces $\mathbb{T}^2$ with the unit disk of $\mathbb{R}^2$, Kiselev and \v{S}ver\'{a}k showed that the double exponential growth \eqref{eq.growthYudovich} is sharp \cite{kiselevSverak} (see \cite{xu} for the extension to any symmetrical smooth domain). In some sense, these examples are also based on the Bahouri-Chemin cross, but also inspired by the computations of finite-time singularity of the 3D Euler equations in a cylinder \cite{LH14} (which has recently been made mathematically rigorous \cite{ChenHou22}).  

Another class of norm growth results is based on shearing.   
The first example of ``generic'' norm growth was demonstrated by Nadirashvili \cite{Nadirashvili1991}, who demonstrated examples of small perturbations of the Couette flow in a periodic channel and the corresponding Taylor-Couette in an annulus which lead to unbounded norm growth like $\norm{\grad \omega(t)}_{L^\infty} \gtrsim t$. 
The recent results of \cite{DEJ22} greatly expand on this idea to produce many more examples of `wandering' solutions and unbounded gradient growth on open sets of initial data in a variety of settings (again near stable stationary states). 
Using the mechanism of inviscid damping, a stronger characterization of small-scale creation -- the loss of $L^2$ pre-compactness of vorticity -- has been proved in a few examples for open sets of small perturbations of special stationary states in sufficiently smooth topologies.
This was first done for the Couette flow in $\mathbb T \times \mathbb R$ \cite{BM13} followed by results near a point vortex in the plane in \cite{IJ22}, and strictly monotone shear flows in a channel satisfying a suitable spectral condition (no embedded eigenvalues) \cites{IJ20,IJ_MS_20,MZ20}. 

\subsubsection{Invariant measures of Euler}
If a Gaussian measure $\mu$ on $H^{\sigma}$ is known to be invariant for the dynamics of a partial differential equations, for which a standard local Cauchy theory is known, then the global solutions $\Omega^{\omega}(t)$ starting at time zero from $\Omega_0^{\omega}$ (which has law $\mu$) almost-surely satisfy the bound
\begin{equation}
    \label{eq.boundGlobalizationBourgain}
    \|\Omega^{\omega}(t)\|_{H^{\sigma}} \leqslant C_{\omega} \sqrt{\log (t+2)}\,,  
\end{equation}
where the random constant $C_{\omega}$ satisfies $\mathbb{E}[e^{cC_{\omega}^2}]<\infty$. This is known as the Bourgain globalization argument~\cite{Bourgain94}. Such a bound for the growth of the Sobolev norms is in sharp contrast with~\eqref{eq.growthYudovich} and seems at odds with the existing norm growth results discussed in \cref{sec.introGrowth} and the observed dynamics of generic initial conditions on the torus or the sphere (see numerical simulations in e.g. \cite{SK98,MV20,DQM15,tabeling2002two} and the references therein and the discussion in \cref{sec:turb} below). 
Note that if the invariant measure $\mu$ only has weaker moment bounds,  or the measure $\mu$ is only quasi-invariant as in \cref{assump.quasinvar} for example, then an upper bound for the growth of Sobolev norms similar to~\eqref{eq.boundGlobalizationBourgain} is known to hold with $\sqrt{\log(t+2)}$  replaced by a different function of time (usually a polynomial) depending on the moments, the PDE, or the quasi-invariance parameters. 

For all $\sigma \geqslant 1$, in~\cites{kuksin,latocca}, invariant measures of the 2D Euler equations were constructed in $H^{\sigma}$ from suitable limits of regularized problems using compactness methods. See also \cites{foldesSy,sy21} for the SQG model or the septic Schrödinger equations cases.  
On the support of these measures, one can show growth estimates of the type
\[
    \|\Omega(t)\|_{H^{\sigma}} \leqslant C_{\omega} (1+t)^{\alpha}\,,    
\]
but an important question remains: what is the support of such invariant measures?
Very little is currently known (for example, we do not know if all such measures are in fact supported only on stationary solutions).
The identification and characterization of `fluctuation-dissipation measures' (\textit{i.e.\ }limits obtained by adding stochastic noise balanced against viscosity and taking inviscid limits, as in for example \cites{kuksin,foldesSy}) and similar limiting measures is mostly open for nonlinear problems.
See \cites{BCZGH16,BBPS18,BCZ15} for the much simpler problem of passive scalars where the corresponding measures can be classified, sometimes exactly. 

\subsubsection{Random initial data and 2D turbulence} \label{sec:turb}
The \emph{direct enstrophy cascade}, \textit{i.e.\ }the tendency for enstrophy to be transferred from large to small scales, is central to the modern understanding of 2D turbulence. 
Turbulence primarily concerns the inviscid limit of the Navier-Stokes equations (rather than directly on Euler) and the 2D turbulence problem is greatly complicated by the presence of the inverse energy cascade.
See \cites{Fjortoft53,Kraichnan67,Leith1968,Batchelor1969} where predictions about this dual cascade were first made and \cites{MontKraichnan1980,tabeling2002two,BE12,Biskamp2003,BCZPSW20,Dudley2023} for more discussion. 
For empirical evidence of 2D turbulence, see the surveys~\cites{BE12,Kellay2002,CerbusPhD2015}, \cites{Lindborg99,Charney1971} for observations from atmospheric data, and e.g. \cites{Gharib1989,Rutgers2001,Kellay2017,Rivera2014,Paret1999} and the references therein for a few of the many experiments.

We remark that physicists and engineers often consider turbulence in a statistically stationary state, \textit{i.e.\ }where external driving produces a continuous input of enstrophy, either from the boundaries or through body forcing, and the $t \to \infty$ limit is taken before the inviscid limit (see discussions in \cite{BE12}), which is somewhat different from a random initial data problem. 
All classical predictions, such as the analogues of the Kolmogorov 4/5 law (see \cites{Bernard99,Lindborg99,Yakhot99,CP17,XB18,BCZPSW20,Eyink96}) and the Batchelor-Kraichnan power spectrum are most easily interpreted in this regime.
Nevertheless, the random initial data problem is observed to lead to closely related dynamics referred to as ``decaying turbulence'', where one sees a transient enstrophy cascade, often generically with a decaying Batchelor-Kraichnan spectrum (see e.g. \cites{Rivera2003,Cardoso1994,Hansen1998} and the references therein).
The inverse cascade, at least on $\mathbb T^2$ or the disk, is observed to lead to behavior sometimes referred to ``condensation'' or ``vortex crystallization'' in which most of the energy congregates into certain large-scale coherent structures; see for example \cite{GY13} and \cite{schecter1999vortex}.   

In any case, the existing norm growth results discussed above, and especially \v{S}ver\'{a}k's Conjecture, is intimately connected to the ability of 2D Euler to send enstrophy to small scales, as observed in 2D turbulence (Shnirelman's Conjecture \cite{DE22}*{Conjecture 4} is also intimately related to the observed condensation effect). 
The observed turbulent dynamics strongly suggest that the Euler equations initialized from data where the spatial and frequency scales are sufficiently  independent are likely to create smaller scales provided the initial conditions are more regular than at least $H^1$ (where the nonlinear interactions of small scales could formally start to dominate) but more likely the result holds at least all the way down to the regularity of the Batchelor-Kraichnan spectrum (corresponding to $H^{-\eps}$ regularity, \textit{i.e.\ }$a_n = \brak{n}^{-2}$).
It even seems plausible that anything more regular than the Gibbs measure should not be invariant unless it is extremely degenerate.
These ideas motivate both \cref{conj1,conj2}.

\subsection{Plan of the paper}

In \cref{sec.prelim} we gather some standard fact about the bilinear form of the Euler equations for the convenience of the reader.

In \cref{sec.main} we provide the reader with the main idea underlying the proof of \cref{thm.main}, which is then reduced to proving two key statements : \cref{prop.orthogonality}, which is the goal of \cref{sec.ortho} and \cref{prop.estW} which is the goal of \cref{sec.perturbative}. 

Then \cref{sec.genericity} is devoted to proving \cref{thm.genericity}, \cref{thm.probaGenericity} and \cref{thm.examples}. 

\subsection*{Acknowledgments} 

M.\,L. thanks Nikolay~Tzvetkov for interesting discussions, as well as Maxime~Breden for suggesting the interval arithmetic package for python.

The research of J\,.B. was supported by NSF Award DMS-2108633.

\section{Preliminaries}\label{sec.prelim}

\subsection{Notations used}

We write $\langle n \rangle = \sqrt{1 + |n|^2}$, where $|n|^2=n_1^2+n_2^2$ for $n=(n_1,n_2)\in\mathbb{Z}^2$ and $n^{\perp}=(-n_2,n_1)$.

We recall that the Fourier transform of a regular-enough $f : \mathbb{T}^2 \to \mathbb{R}^N$ is the function $\hat f : \mathbb{Z}^2 \to \mathbb{R}^N$ defined by  
\[
    \hat f (n)=\int_{\mathbb{T}^2}f(x)e^{-in\cdot x}\,\mathrm{d}x\,,    
\]
which we simply refer to as $\hat f_n$ for notational simplicity. We also write $\mathcal{F}(u)$ in place $\hat{u}$. The Fourier inversion formula therefore reads  
\[
    f(x)= \frac{1}{(2\pi)^2}\sum_{n\in\mathbb{Z}^2} \hat f_n e^{in\cdot x}.    
\]

We define the space $H^{\sigma}(\mathbb{T}^2)$ as the set of functions $f$ such that $\|f\|_{H^{\sigma}}<\infty$ where 
 \[
    \|f\|_{H^{\sigma}} \coloneqq \left(\sum_{n\in\mathbb{Z}^2} \langle n \rangle ^{2\sigma}|\hat{f}_n|^2\right)^{\frac{1}{2}}.
\]

\subsection{The bilinear form}

Let us denote $U[\Omega]$ the vector field $\mathbb{T}^2 \to \mathbb{R}^2$ defined by $U[\Omega]=\nabla^{\perp}(-\Delta)^{-1}\Omega$
which is uniquely defined in the class of divergence-free vector fields. We recall that thanks to Calder\'on-Zygmund estimates, the linear operator $a \mapsto \nabla U[a]$ is continuous $L^p \to L^p$ for any $p\in (1,\infty)$.

We define the following bilinear form for any smooth enough functions $a, b : \mathbb{T}^2 \to \mathbb{R}$ by 
\begin{equation}
    \label{eq.bilinear} 
    B(a,b)=\frac{1}{2}U[a]\cdot \nabla b + \frac{1}{2}U[b]\cdot \nabla a, 
\end{equation}
and use the notation $B_1(\Omega) \coloneqq B(\Omega, \Omega) = U[\Omega]\cdot \nabla \Omega$. 

We will also use the following notation: 
\begin{equation}
    \label{eq.B3}
    B_2(\Omega)= B(\Omega,B_1(\Omega))\,,
\end{equation}
\begin{equation}
    \label{eq.B4}
    B_3(\Omega)= B(B_1(\Omega),B_1(\Omega)) + 2B(\Omega,B_2(\Omega))\,, 
\end{equation}
\begin{equation}
    \label{eq.B5}
    B'_3(\Omega)= B(B_1(\Omega),B_2(\Omega))\,,     \tilde{B}_3(\Omega)= B(B_2(\Omega),B_2(\Omega))\,.
\end{equation}

\begin{lemma}[Properties of the $B_i$]\label{lemm.bilinearForm} Let $a, b, \Omega : \mathbb{T}^2 \to \mathbb{R}$. Then: 
\begin{enumerate}
    \item[(i)] $(U[b]\cdot \nabla a,a)_{L^2}=0$.
    \item[(ii)] For any $s>0$, and any $0<\varepsilon < s$ there holds  
    \begin{equation}
        \label{eq.Baaa}
        |\langle B(a,a),a \rangle_{H^s}| \leqslant C(s,\varepsilon) \|a\|_{H^{1+\varepsilon}}\|a\|_{H^{s}}^2\,, 
    \end{equation} 
    \begin{equation}
        \label{eq.Baba}
        |\langle B(a,b),a \rangle_{H^s}| \leqslant C(s,\varepsilon) \|b\|_{H^{s+1+\varepsilon}}\|a\|_{H^{s}}^2.
    \end{equation} 
    \item[(iii)] Similarly for any $0<\varepsilon < s$ there holds 
        \begin{equation}
            \|B_1(\Omega)\|_{H^s} \leqslant C(s,\varepsilon) \|\Omega\|^2_{H^{s+1}}\,,
        \end{equation} 
        \begin{equation}
            \|B_2(\Omega)\|_{H^s}\leqslant C(s,\varepsilon) \|\Omega\|^3_{H^{s+2}}\,,
        \end{equation}
        \begin{equation}
            \|B_3(\Omega)\|_{H^s}\leqslant C(s,\varepsilon) \|\Omega\|^4_{H^{s+3}}\,,
        \end{equation}
        \begin{equation}
            \|B'_3(\Omega)\|_{H^s} \leqslant C(s,\varepsilon)\|\Omega\|^5_{H^{s+3}} \;\;\text{ and }\;\; \|\tilde{B}_3(\Omega)\|_{H^s} \leqslant C(s,\varepsilon)\|\Omega\|^6_{H^{s+3}}\,,
        \end{equation}
\end{enumerate}
\end{lemma}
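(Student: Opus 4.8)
The plan is to treat the three parts in increasing order of difficulty. Part (i) is an integration by parts: since $U[b]=\nabla^\perp(-\Delta)^{-1}b$ is divergence free, $(U[b]\cdot\nabla a)\,a=\tfrac12\diver(a^2\,U[b])$, so $\int_{\mathbb{T}^2}(U[b]\cdot\nabla a)\,a\,\dee x=0$; equivalently, on the Fourier side this is the cancellation $\sum_{n+m+p=0}\frac{n^\perp\cdot m}{|n|^2}\hat a_n\hat a_m\hat a_p=0$, obtained by symmetrizing in $(m,p)$ and using $m^\perp\cdot n=-n^\perp\cdot m$ together with $n+m+p=0$.

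For part (iii) the structural input is that the Biot--Savart operator gains one derivative: since $\widehat{U[f]}_n=\tfrac{in^\perp}{|n|^2}\hat f_n$ (the mean playing no role, all our functions having zero mean) one has $\|U[f]\|_{H^{r+1}}\lesssim\|f\|_{H^r}$ for every $r$, while $\|\nabla f\|_{H^r}\lesssim\|f\|_{H^{r+1}}$, so each summand of $B(a,b)=\tfrac12U[a]\cdot\nabla b+\tfrac12U[b]\cdot\nabla a$ is of net order zero. Combining this with the two-dimensional Sobolev product estimate $\|fg\|_{H^s}\lesssim_{s,s_1,s_2}\|f\|_{H^{s_1}}\|g\|_{H^{s_2}}$, valid whenever $s_1>1$ and $0\leqslant s\leqslant s_2\leqslant s_1$ (proved by a Littlewood--Paley/paraproduct decomposition), applied with $s_1=s+2$, $s_2=s$ to $(f,g)=(U[a],\nabla b)$ and to $(f,g)=(U[b],\nabla a)$, gives $\|B(a,b)\|_{H^s}\lesssim_s\|a\|_{H^{s+1}}\|b\|_{H^{s+1}}$. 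Taking $a=b=\Omega$ yields the stated bound on $B_1=B(\Omega,\Omega)$, and the bounds on $B_2,B_3,B_3',\tilde B_3$ follow by iterating this inequality through the definitions \eqref{eq.B3}--\eqref{eq.B5} and the trivial inclusions $H^{s+3}\hookrightarrow H^{s+2}\hookrightarrow H^{s+1}$; for instance $\|B_2(\Omega)\|_{H^s}\lesssim\|\Omega\|_{H^{s+1}}\|B_1(\Omega)\|_{H^{s+1}}\lesssim\|\Omega\|_{H^{s+1}}\|\Omega\|_{H^{s+2}}^2\lesssim\|\Omega\|_{H^{s+2}}^3$, and similarly for the remaining three.

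Part (ii) is the substantive part. Writing $\langle D\rangle^s$ for the Fourier multiplier $\hat f_n\mapsto\langle n\rangle^s\hat f_n$, we have $\langle B(a,a),a\rangle_{H^s}=\langle\langle D\rangle^s(U[a]\cdot\nabla a),\langle D\rangle^s a\rangle_{L^2}$. Since $\langle D\rangle^s$ commutes with $\nabla$ we split off the commutator $[\langle D\rangle^s,U[a]]\cdot\nabla a$, and the remaining term $\langle U[a]\cdot\nabla(\langle D\rangle^s a),\langle D\rangle^s a\rangle_{L^2}$ vanishes by part (i) (applied with $b=a$ and with $\langle D\rangle^s a$ in place of $a$, using $\diver U[a]=0$). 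Thus $\langle B(a,a),a\rangle_{H^s}=\langle[\langle D\rangle^s,U[a]]\cdot\nabla a,\langle D\rangle^s a\rangle_{L^2}$, and the Kato--Ponce-type commutator estimate for divergence-free vector fields, $|\langle[\langle D\rangle^s,V]\cdot\nabla w,\langle D\rangle^s w\rangle_{L^2}|\lesssim_s\|\nabla V\|_{L^\infty}\|w\|_{H^s}^2$ for $s>0$ (whose sharp form, with $w$ entering only through $\|w\|_{H^s}$, relies on the divergence-free cancellation), bounds this by $\|\nabla U[a]\|_{L^\infty}\|a\|_{H^s}^2$. Finally $\nabla U[\cdot]$ is a zeroth-order Fourier multiplier, $|\widehat{\nabla U[a]}_n|\leqslant|\hat a_n|$, so $\|\nabla U[a]\|_{L^\infty}\leqslant\frac1{(2\pi)^2}\sum_n|\hat a_n|\lesssim_\varepsilon\|a\|_{H^{1+\varepsilon}}$ by Cauchy--Schwarz, since $\sum_{n\in\mathbb{Z}^2}\langle n\rangle^{-2-2\varepsilon}<\infty$; this proves \eqref{eq.Baaa}. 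For \eqref{eq.Baba}, write $B(a,b)=\tfrac12U[b]\cdot\nabla a+\tfrac12U[a]\cdot\nabla b$: the first term is handled exactly as above with $V=U[b]$, giving $\lesssim_\varepsilon\|b\|_{H^{1+\varepsilon}}\|a\|_{H^s}^2$; for the second, $|\langle U[a]\cdot\nabla b,a\rangle_{H^s}|\leqslant\|U[a]\cdot\nabla b\|_{H^s}\|a\|_{H^s}\lesssim_s\|U[a]\|_{H^{s+1}}\|\nabla b\|_{H^s}\|a\|_{H^s}\lesssim_s\|a\|_{H^s}^2\|b\|_{H^{s+1}}$ by the product estimate of part (iii) (using $\|U[a]\|_{H^{s+1}}\lesssim\|a\|_{H^s}$ and $s+1>1$). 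Since $s\geqslant0$, both $\|b\|_{H^{1+\varepsilon}}$ and $\|b\|_{H^{s+1}}$ are dominated by $\|b\|_{H^{s+1+\varepsilon}}$, which gives \eqref{eq.Baba}.

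The main obstacle is the commutator estimate invoked in part (ii): one needs precisely the sharp form in which $V$ enters only through $\|\nabla V\|_{L^\infty}$ (hence $a$ only through $\|a\|_{H^{1+\varepsilon}}$) and $w$ only through $\|w\|_{H^s}$, which forces the divergence-free cancellation to be used and precludes off-the-shelf fractional Leibniz bounds that would generate an uncontrollable $\|\nabla a\|_{L^\infty}$. A robust alternative, well-suited to this very explicit periodic setting and covering the full range $s>0$, is to expand $\langle B(a,a),a\rangle_{H^s}$ directly as a symmetrized trilinear Fourier sum, in the spirit of the computations behind \eqref{eq.gamma}--\eqref{eq.beta}: after using $n+q+p=0$ and the antisymmetry of $n^\perp\cdot q$, the weight combination that appears is a finite difference $\langle n\rangle^{2s}-\langle q\rangle^{2s}=O(|n-q|\,(\langle n\rangle+\langle q\rangle)^{2s-1})$, and this one-derivative gain exactly compensates $\nabla$, after which the sum is estimated term by term by $\|a\|_{H^{1+\varepsilon}}\|a\|_{H^s}^2$. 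A secondary subtlety, relevant only for $s\in(0,1)$, is that no $L^\infty$ bound on a once-differentiated $H^s$ function is available, so the product estimates must be routed through Littlewood--Paley theory rather than Hölder's inequality.
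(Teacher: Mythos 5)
Your overall strategy is the same as the paper's: part (i) from $\operatorname{div} U[b]=0$, part (iii) from the one-derivative gain of Biot--Savart combined with Sobolev product laws, and part (ii) by peeling off the commutator $[\langle\nabla\rangle^s,U[\cdot]\cdot\nabla]$, killing the symmetric piece with part (i), and closing with the embedding $H^{1+\varepsilon}\hookrightarrow L^\infty$. Parts (i) and (iii) are correct as written.

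The problem is the commutator estimate you lean on in part (ii). The inequality $|\langle[\langle\nabla\rangle^s,V]\cdot\nabla w,\langle\nabla\rangle^s w\rangle_{L^2}|\lesssim_s\|\nabla V\|_{L^\infty}\|w\|_{H^s}^2$ is \emph{false} for general divergence-free $V$ once $s>1$, and divergence-freeness does not rescue it: take $V=\nabla^\perp\psi$ with $\hat\psi$ supported at frequencies $\pm N$ and amplitude $N^{-2}$ (so $\|\nabla V\|_{L^\infty}\sim 1$), and $w$ real with $|\hat w_k|=1$ for some $|k|=1$ and $|\hat w_{N+k}|=N^{-s}$ (so $\|w\|_{H^s}\sim1$). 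The single interaction $j=N$, $m=k$, $n=N+k$ contributes $\approx\langle N+k\rangle^{2s}\,|\hat V_N\cdot k|\,|\hat w_k|\,|\hat w_{N+k}|\sim N^{2s}\cdot N^{-1}\cdot N^{-s}=N^{s-1}\to\infty$, and nothing cancels it; the divergence-free structure only removes the diagonal $m=n$, not the high--low interaction in which $V$ carries the high frequency. Consequently your intermediate claim $|\langle U[b]\cdot\nabla a,a\rangle_{H^s}|\lesssim\|b\|_{H^{1+\varepsilon}}\|a\|_{H^s}^2$ in the proof of \eqref{eq.Baba} is itself false for large $s$ (the same example, with $|\hat b_N|\sim1$ so that $\|b\|_{H^{1+\varepsilon}}\sim N^{1+\varepsilon}$, gives a left-hand side of order $N^{s-1}$). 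The correct Kato--Ponce estimate has a second term, schematically $\|\langle\nabla\rangle^s V\|_{L^p}\|\nabla w\|_{L^q}$ with $1/p+1/q=1/2$, and the lemma is true precisely because this term is harmless \emph{in the two configurations actually needed}: for \eqref{eq.Baaa} one has $V=U[a]$, so the high frequencies of $V$ are high frequencies of $a$ and (choosing $2/p=\varepsilon$) the second term is $\lesssim\|a\|_{H^{s-\varepsilon}}\|a\|_{H^{1+\varepsilon}}\leqslant\|a\|_{H^{1+\varepsilon}}\|a\|_{H^s}$; for \eqref{eq.Baba} the target allows the generous $\|b\|_{H^{s+1+\varepsilon}}$, two derivatives above the natural scale of $U[b]$, which absorbs the second term (for small $s\in(0,1)$ one should route this through a paraproduct decomposition rather than H\"older, as you yourself note at the end). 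So the final inequalities are all true, and your fallback suggestion --- expanding the trilinear form in Fourier and exploiting the finite difference $\langle n\rangle^{2s}-\langle q\rangle^{2s}$ --- is a sound way to make this rigorous; but as written, the proof of part (ii) rests on an estimate that does not hold in the stated generality, and the distribution of derivatives between the $V$-slot and the $w$-slot needs to be redone with the genuine two-term commutator bound.
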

    
\begin{proof} Part (\textit{i}) is a consequence of the fact that $U[b]$ is divergence-free. 
    
\textit{(ii)}: Estimate~\eqref{eq.Baaa} is a consequence of the Kato-Ponce commutator estimate, (see for instance~\cite{katoPonce}), 
\[
    [\langle \nabla \rangle^s,U[a] \cdot \nabla ]= \mathcal{O}_{H^s \to L^2}(\|\nabla U[a]\|_{L^{\infty}})
\] and~\textit{(i)}, as well as the Sobolev embedding $H^{1+\varepsilon} \hookrightarrow L^{\infty}$ and the fact that $a \mapsto \nabla U[a]$ is continuous as a linear map $H^s \to H^s$. Estimate~\eqref{eq.Baba} follows from similar computations and product estimates on $U[a]\cdot \nabla b$ in $H^s$.  

Finally, \textit{(iii)} follows from product laws in Sobolev spaces. 
\end{proof}

\subsection{Invariant measures bounds}

We recall the following almost-sure upper bounds for the growth of Sobolev norms for solutions to~\eqref{eq.eulerVorticity} with initial data sampled on the support of an invariant Gaussian measure. 

\begin{prop}[\cite{Bourgain94}]\label{prop.globalBound}
Let $\sigma>2$ and $\mu$ a Gaussian measure on $H^{\sigma}(\mathbb{T}^2)$ which is either invariant under the flow of~\eqref{eq.eulerVorticity} or satisfies the quantitative quasi-invariance property of \cref{assump.quasinvar}. Then there exists a deterministic constant $c >0$ and a random constant $C_{\omega}$ such that for almost every $\omega$ there holds for any $t\geqslant 0$,  
    \begin{equation}
        \label{eq.invarianceBound}
        \|\Omega^{\omega}(t)\|_{H^{\sigma}} \leqslant  \left\{
            \begin{array}{rl}
                C_{\omega}\sqrt{\log(2+t)} & \text{ in the invariant case,}\\  
                C_{\omega}(1+t^{\eta})^{\frac{\alpha}{2}}\sqrt{\log(2+t)} & \text{ in the quasi-invariant case,}
            \end{array}
        \right.
    \end{equation}
    and where $\mathbb{E}[e^{c C_{\omega}^2}] < \infty$.
\end{prop}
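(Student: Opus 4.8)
The plan is to run Bourgain's globalization argument \cite{Bourgain94}, adapted to tolerate quasi-invariance, using three ingredients: (a) a Gaussian concentration bound for $\mu$; (b) the (quasi-)invariance hypothesis, which transports that bound along the flow at a given time; and (c) a local-in-time a priori estimate, which lets one interpolate between consecutive discrete times. The random constant will be $C_\omega \coloneqq \sup_{t\geqslant 0}\norm{\Omega^\omega(t)}_{H^\sigma}/f(t)$, where $f(t)=\sqrt{\log(2+t)}$ in the invariant case and $f(t)=(1+t^\eta)^{\alpha/2}\sqrt{\log(2+t)}$ in the quasi-invariant case, and the goal is the tail estimate $\mathbb{P}(C_\omega>\lambda)\lesssim \lambda e^{-c\lambda^2}$ for $\lambda$ large, which integrates to $\EE[e^{c'C_\omega^2}]<\infty$ after shrinking the constant. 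Continuity of $t\mapsto\Omega^\omega(t)$ into $H^\sigma$, from the global well-posedness recalled above, makes the relevant suprema over compact intervals measurable.

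First I would record the local estimate. From $\partial_t\Omega=-B_1(\Omega)$ and parts (i) and (ii) of \cref{lemm.bilinearForm} one has $\tfrac12\tfrac{\dee}{\dee t}\norm{\Omega}_{H^\sigma}^2=-\langle B(\Omega,\Omega),\Omega\rangle_{H^\sigma}$, and hence, by \eqref{eq.Baaa} and $\sigma>2$, $\tfrac{\dee}{\dee t}\norm{\Omega(t)}_{H^\sigma}\lesssim_\sigma\norm{\Omega(t)}_{H^\sigma}^2$. Integrating, there is $c_0=c_0(\sigma)>0$ such that $\norm{\Omega(t_0)}_{H^\sigma}\leqslant\lambda$ implies $\norm{\Omega(t)}_{H^\sigma}\leqslant 2\lambda$ for all $t\in[t_0,t_0+c_0/\lambda]$.

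Next, by Fernique's theorem there are $C_1\geqslant 1$, $c_1>0$ with $\mu(\norm{\Omega}_{H^\sigma}>\lambda)\leqslant C_1 e^{-c_1\lambda^2}$. Since $\Omega^\omega(t)$ has law $\mu_t=(\Phi_t)_*\mu$, in the invariant case $\mathbb{P}(\norm{\Omega^\omega(t)}_{H^\sigma}>\lambda)\leqslant C_1e^{-c_1\lambda^2}$ for every $t$; in the quasi-invariant case, \eqref{eq.quasinvarDef} together with $(1+t^\eta)^{-\alpha}\geqslant 2^{-\alpha}2^{-k\eta\alpha}=:\theta_k$ for $t\in[0,2^k]$ yields $\mathbb{P}(\norm{\Omega^\omega(t)}_{H^\sigma}>\lambda)\leqslant C_1 e^{-c_1\theta_k\lambda^2}$ for $\lambda$ large enough (so that $C_1e^{-c_1\lambda^2}<1$) and all $t\in[0,2^k]$; one sets $\theta_k=1$ in the invariant case. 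On a dyadic block $[2^{k-1},2^k]$ I would then cover by $\lesssim 2^k\lambda/c_0$ points, apply the local estimate at each, and use a union bound to get $\mathbb{P}\big(\sup_{t\in[2^{k-1},2^k]}\norm{\Omega^\omega(t)}_{H^\sigma}>2\lambda\big)\lesssim 2^k\lambda\, e^{-c_1\theta_k\lambda^2}$, and similarly with $k=0$ on $[0,1]$.

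Finally, I would combine the blocks. On $[2^{k-1},2^k]$ the square of the target function, $\log(2+t)$ resp.\ $(1+t^\eta)^\alpha\log(2+t)$, is comparable to $k/\theta_k$; so applying the block estimate with $\lambda$ replaced by $\lambda\sqrt{k/\theta_k}$ turns the exponent into $\asymp\lambda^2 k$, which dominates the $2^k$ prefactor once $\lambda$ is large enough (depending on $\eta,\alpha$), and summing the resulting (essentially geometric) series in $k$ gives $\mathbb{P}(C_\omega>\lambda)\lesssim\lambda e^{-c\lambda^2}$ for $\lambda$ large. Integrating this tail bound yields $C_\omega<\infty$ almost surely and $\EE[e^{c'C_\omega^2}]<\infty$ for $c'$ small, and the asserted pointwise bound $\norm{\Omega^\omega(t)}_{H^\sigma}\leqslant C_\omega f(t)$ holds by definition of $C_\omega$. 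The only delicate point is this last bookkeeping in the quasi-invariant case: one must calibrate the threshold on each dyadic block so that the deteriorating exponent $\theta_k$ is exactly absorbed by the growth of $f$, keep all constants uniform in $k$, and check that the combinatorial factors $2^k$ (the number of discrete times) stay subdominant against the Gaussian exponential; in the invariant case $\theta_k\equiv 1$ and this reduces to the textbook argument.
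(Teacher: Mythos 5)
Your proposal is correct and follows essentially the same route as the paper: Bourgain's globalization argument with Gaussian concentration, the local well-posedness time scale $\tau\sim c/\lambda$, a union bound over $\sim T\lambda$ time steps with the quasi-invariance exponent $(1+T^{\eta})^{-\alpha}$ absorbed by the growth of the target function, and a dyadic summation in time. The only cosmetic difference is that you derive the tail bound on $C_\omega$ directly by summing over dyadic blocks, whereas the paper organizes the same estimates as a two-parameter family of good sets $A_{T,\varepsilon}$ followed by a Borel--Cantelli step; the content is identical.
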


\begin{proof} The proof in the invariant case can be found in~\cite{Bourgain94}, so let us illustrate the argument by proving the very similar case of the quantitative quasi-invariance case. 
    
\textit{Step 1: Large measure, finite time.} Let $T>0$, $\varepsilon >0$ and $\lambda = \lambda (\varepsilon, T) >0$ which will be chosen later. By standard local well-posedness of~\eqref{eq.eulerVorticity} in $H^{\sigma}$, there exists $c>0$ such that if $\tau = \frac{c}{\lambda}$ and $\|\Omega(t_0)\|_{H^{\sigma}} \leqslant \lambda$ then $\|\Omega(t)\|_{H^{\sigma}}\leqslant 2\lambda$ for all $t_0 \leqslant t \leqslant t_0 +\tau$. 
Let $B_{\lambda}\coloneqq \{\Omega_0 \in H^{\sigma}: \|\Omega_0\|_{H^{\sigma}}\leqslant \lambda \}$, so that by the fact that $\mu$ is Gaussian there holds $\mu(H^{\sigma}\setminus B_{\lambda}) \leqslant e^{-c\lambda^2}$. Let us introduce 
\[
    G_{\lambda} \coloneqq \bigcap_{n=0}^{\lfloor T/\tau \rfloor} (\Phi_{n\tau})^{-1}(B_{\lambda}),     
\] 
where $\Phi_t$ is the flow of~\eqref{eq.eulerVorticity}. We have 
\begin{align*}
    \mu(H^{\sigma}\setminus G_{\lambda}) & \leqslant \sum_{n=0}^{\lfloor T/\tau \rfloor} \mu\left(\Phi_{n\tau})^{-1}(H^{\sigma} \setminus B_{\lambda})\right) \\
    & \leqslant \sum_{n=0}^{\lfloor T/\tau \rfloor} \mu\left(H^{\sigma} \setminus B_{\lambda}\right)^{(1+(n\tau)^{\eta})^{-\alpha}} \leqslant T\tau^{-1} e^{-c\lambda^2(1+T^{\eta})^{-\alpha}},    
\end{align*}
where we have used~\eqref{eq.quasinvarDef} in the second inequality. Therefore there exists a numerical constant $C>0$ such that  
\[
    \mu(H^{\sigma}\setminus G_{\lambda}) \leqslant cT\lambda e^{-c\lambda^2(1+T^{\eta})^{-\alpha}} \leqslant CTe^{-\frac{c}{2}\lambda^2(1+T^{\eta})^{-\alpha}}. 
\]
Let us choose $\lambda \sim (1+T^{\eta})^{\frac{\alpha}{2}} \sqrt{\log(T)+\log(\varepsilon^{-1})}$ and let $A_{T,\varepsilon} = \{\omega: \Omega_0^{\omega} \in G_{\lambda}\}$. Then any $\omega \in A_{T,\varepsilon}$ is such that $\|\Omega^{\omega}(t)\|_{H^{\sigma}} \leqslant C (1+T^{\eta})^{\frac{\alpha}{2}} \sqrt{\log(T)+\log(\varepsilon^{-1})}$ 
for all $0\leqslant t \leqslant T$, and $\mathbb{P}(\omega \notin A_{T,\varepsilon}) \leqslant \varepsilon$.  

\textit{Step 2: Global in time almost-sure bounds.} We proceed with a standard Borel-Cantelli argument. Let us consider the probability set 
$A_{\varepsilon} \coloneqq \bigcap_{n\geqslant 1} A_{2^n,2^{-n}\varepsilon}$, which satisfies $\mathbb{P}(\omega \notin A_{\varepsilon}) \leqslant \sum_{n\geqslant 1} \mathbb{P}(\omega\notin A_{2^n,\varepsilon 2^{-n}}) \leqslant \varepsilon$. Let $\omega \in A_{\varepsilon}$ and $t \geqslant 0$. Fix $n$ such that $t \in [2^{n-1},2^{n}]$. Since $\omega \in A_{2^n,\varepsilon 2^{-n}}$, we have 
\[
    \|\Omega^{\omega}(t)\|_{H^{\sigma}} \leqslant C (1+(2^n)^{\eta})^{\frac{\alpha}{2}} \sqrt{\log(2^n)+\log(\varepsilon^{-1}2^n)} \lesssim \sqrt{\log(\varepsilon^{-1})}(1+t^{\eta})^{\frac{\alpha}{2}} \sqrt{\log(2+t)},
\]
which therefore holds for all $t \geqslant 0$ by definition of $n$. 

Finally, the set $A\coloneqq \bigcup_{k\geqslant 1} \bigcap_{j\geqslant k}A_{2^{-j}}$ satisfies $\mathbb{P}(A)=1$ and for any $\omega \in A$, ~\eqref{eq.invarianceBound} holds. Moreover, $\{C_{\omega}>\lambda\} \subset A_{2^{-j}}$ where $j$ is such that $\sqrt{j} \geqslant \lambda$, therefore, $\mathbb{P}\left(C_{\omega}>\lambda\right)\leqslant 2^{-j} \leqslant e^{-c\lambda^2}$, which ensures $\mathbb{E}[e^{-cC_{\omega}^2}] < \infty$. 
\end{proof}

\section{Proof of \cref{thm.main}}\label{sec.main}

The main idea in the proof of \cref{thm.main} is that on short time we expect the following expansion: 
\begin{equation} 
    \label{eq.ansatz}
    \Omega^{\omega}(t) \simeq \Omega^{\omega}_0 - tB_1(\Omega^{\omega}_0) + t^2 B_2(\Omega^{\omega}_0)\,,
\end{equation}
and that this ansatz is responsible for some change of the Sobolev norms. More precisely the crucial claim is the following:

\begin{prop}\label{prop.orthogonality} Let $s > 0$.
Then, there holds 
\[
    \|\Omega_0^{\omega} -t B_1(\Omega_0^{\omega}) +t^2B_2(\Omega_0^{\omega})\|_{L^2_{\omega}H^s}^2 = \|\Omega_0^{\omega}\|_{L^2_{\omega}H^s}^2 + \gamma t^2 + \delta t^4\,,  
\]
where $\delta \coloneqq \|B_3(\Omega_0^{\omega})\|^2_{L^2_{\omega}H^s}$ and 
\[
    \gamma \coloneqq \|B_1(\Omega_0^{\omega})\|^2_{L^2_{\omega}H^s} +2\mathbb{E}\left[\langle\Omega_0^{\omega},B_2(\Omega_0^{\omega})\rangle_{H^s}\right]\,, 
\]
is the constant given by~\eqref{eq.gamma}.
\end{prop}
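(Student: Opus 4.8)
Write $\Omega_0=\Omega_0^\omega$. The identity comes from expanding the $H^s$ inner product, so I would first record that $\norm{\Omega_0-tB_1(\Omega_0)+t^2B_2(\Omega_0)}_{L^2_\omega H^s}^2$ is a polynomial $\sum_{j=0}^4 c_jt^j$ in $t$ with $c_0=\norm{\Omega_0}_{L^2_\omega H^s}^2$, $c_1=-2\EE\langle\Omega_0,B_1(\Omega_0)\rangle_{H^s}$, $c_3=-2\EE\langle B_1(\Omega_0),B_2(\Omega_0)\rangle_{H^s}$, $c_4=\EE\norm{B_2(\Omega_0)}_{H^s}^2$, and
\[
c_2=\EE\norm{B_1(\Omega_0)}_{H^s}^2+2\,\EE\langle\Omega_0,B_2(\Omega_0)\rangle_{H^s}.
\]
By \cref{lemm.bilinearForm}(iii) and the finiteness of all Gaussian moments of $\norm{\Omega_0^\omega}_{H^\sigma}$, each $c_j$ is finite provided $s\leqslant\sigma-2$ (which holds in the range $s<\sigma-3$ used in \cref{thm.main}), and all the Fourier series below converge absolutely. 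Since $c_0$ and the remainder coefficient $c_4$ need no argument, it remains to prove (a) $c_1=c_3=0$ and (b) $c_2=\gamma_{s,(a_n)}$ as in \eqref{eq.gamma}.

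For (a), the point is parity: under \cref{assump.gn}, $\Omega_0$ is a linear functional of the jointly Gaussian, mean-zero, symmetric family $\{r_n,s_n\}$, so $B_1(\Omega_0)$ is homogeneous of degree $2$ and $B_2(\Omega_0)$ of degree $3$ in it; hence $\langle\Omega_0,B_1(\Omega_0)\rangle_{H^s}$ is a homogeneous cubic and $\langle B_1(\Omega_0),B_2(\Omega_0)\rangle_{H^s}$ a homogeneous quintic, both of odd degree, so their expectations vanish. (When $s=0$ one in fact has $\langle\Omega_0,B_1(\Omega_0)\rangle_{L^2}=0$ pointwise by \cref{lemm.bilinearForm}(i); for $s>0$ only the mean vanishes.)

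For (b) — the main computation — I would pass to Fourier variables, using $\widehat{\Omega_0}_n\propto a_ng_n$ and $\widehat{U[\Omega]}_n=in^\perp\abs{n}^{-2}\widehat{\Omega}_n$, and write
\[
\widehat{B_1(\Omega_0)}_k=\sum_{n+q=k}M(n,q)\,g_ng_q,\qquad \widehat{B_2(\Omega_0)}_k=\sum_{n_1+n_2+n_3=k}N(n_1,n_2,n_3)\,g_{n_1}g_{n_2}g_{n_3},
\]
where, after symmetrization, $M(n,q)$ is proportional to $a_na_q\,(q\cdot n^\perp)\big(\abs{n}^{-2}-\abs{q}^{-2}\big)$ and $N$ is the corresponding iterated Euler symbol obtained from \eqref{eq.bilinear}--\eqref{eq.B3}. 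Then I would compute $\EE\norm{B_1(\Omega_0)}_{H^s}^2=\sum_k\langle k\rangle^{2s}\EE\big|\widehat{B_1(\Omega_0)}_k\big|^2$ and $\EE\langle\Omega_0,B_2(\Omega_0)\rangle_{H^s}=\sum_k\langle k\rangle^{2s}\EE\big[\overline{\widehat{\Omega_0}_k}\,\widehat{B_2(\Omega_0)}_k\big]$ by Wick's theorem (equivalently, the real Gaussian moment formula, using $g_{-n}=\overline{g_n}$). In each expansion the pairing that contracts the two inner legs of a factor $B_1$ against one another is killed by the symbol, since $p^\perp\cdot(-p)=0$, and zero-frequency contributions are absent because $a_0=0$; the surviving pairings leave, in each case, a double sum over frequencies $(n,q)$ with weight $\abs{a_n}^2\abs{a_q}^2$ times products of Euler symbols. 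Tracking output frequencies and multiplicities, $\EE\norm{B_1(\Omega_0)}_{H^s}^2$ carries the Sobolev weight $\langle n+q\rangle^{2s}$ (the output frequency of $B_1$), while in $\EE\langle\Omega_0,B_2(\Omega_0)\rangle_{H^s}$ the weight $\langle k\rangle^{2s}$ attaches to the frequency of $\Omega_0$ in the surviving contraction — whichever of $n$ or $q$ labels the surviving inner leg of $B_2$ — giving the $\langle n\rangle^{2s}$ and $\langle q\rangle^{2s}$ contributions. Collecting the three pieces and relabeling assembles the kernel $\beta_{n,q}$ of \eqref{eq.beta} and yields $c_2=\gamma_{s,(a_n)}$. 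As a consistency check, at $s=0$ all three weights equal $1$, $\beta_{n,q}\equiv0$, and $c_2=0$, in agreement with conservation of enstrophy.

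The hard part will be the Wick bookkeeping for the $B_2$-term: correctly enumerating which pairings survive once $N$ is symmetrized, fixing the multiplicities and signs, and checking that the surviving weights organize into exactly $\langle n\rangle^{2s}\big(\abs{q}^{-2}-\abs{q+n}^{-2}\big)+\langle q\rangle^{2s}\big(\abs{q+n}^{-2}-\abs{n}^{-2}\big)$, which — together with the $\langle n+q\rangle^{2s}$-term from $\EE\norm{B_1(\Omega_0)}_{H^s}^2$ — is precisely $\beta_{n,q}$ (note that the three $\abs{\cdot}^{-2}$-differences in \eqref{eq.beta} sum to $0$, which is exactly why $\gamma$ genuinely starts at order $t^2$). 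Keeping the $(2\pi)$-normalizations consistent throughout is the other place where care is required.
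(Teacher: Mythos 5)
Your proposal is correct and follows essentially the same route as the paper: expand the square in $t$, kill the odd-order coefficients by the vanishing of odd Gaussian moments, and compute the $t^2$ coefficient by Wick contractions in Fourier variables (with the degenerate pairing annihilated by $p^\perp\cdot p=0$), then symmetrize in $(n,q)$ to assemble $\beta_{n,q}$. The only caveat is that you defer the actual Wick bookkeeping for the $B_2$ term, which is exactly the content of the paper's Lemma~\ref{lemm.computation1}; your description of which weights survive matches it.
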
 

The ansatz~\eqref{eq.ansatz} is not an equality and the remainder is given by 
\begin{equation}
    \label{eq.remainderDef} 
    w^{\omega}(t) \coloneqq \Omega^{\omega}(t) - \Omega^{\omega}_0 + tB_1(\Omega_0^{\omega}) - t^2B_2(\Omega^{\omega}_0).
\end{equation}
We next aim to show that this remainder is small for $s \in (0,\sigma-3)$ (the loss of three derivatives is coming from the number of derivatives in the ansatz, \textit{i.e.\ }in $B_2$). 
Given the rapid nonlinear instabilities inherent in the Euler equations it seems difficult to directly justify this ansatz in $H^s$.
However, the assumption of invariance or quasi-invariance implies significant a priori estimates coming from globalization (Proposition \ref{prop.globalBound}). These are leveraged in $H^s$ energy estimates to yield the following stability estimate.  

\begin{prop}\label{prop.estW} Assume that the law of $\Omega^{\omega}_0$ is given by $\mu_{(a_n)}\in \mathcal{M}^{\sigma}$ for some $\sigma >3$, and let $s \in (0,\sigma -3)$.
Suppose that $\mu_{(a_n)}$ is invariant or quasi-invariant in the sense of Assumption \ref{assump.quasinvar}.  
Then there exists a (deterministic) time $t_1\coloneqq t_1(s,\sigma, (a_n))>0$ and a (deterministic) constant $C \coloneqq C(s, \sigma,(a_n))>0$ such that  
    \begin{equation}
        \|w^{\omega}(t)\|_{L^2_{\omega}H^s} \leqslant Ct^3 \text{ for all } t \in [0,t_1]\,, 
    \end{equation}
where $w^{\omega}$ is defined by~\eqref{eq.remainderDef}.
\end{prop}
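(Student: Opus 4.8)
The plan is to derive an energy estimate for $w^\omega(t)$ in $H^s$, using the a priori bounds from \cref{prop.globalBound} to control the high-regularity norms that appear. First I would compute the equation satisfied by $w = w^\omega(t)$. Since $\partial_t \Omega = -B_1(\Omega)$ and $\Omega = w + \Omega_0 - tB_1(\Omega_0) + t^2 B_2(\Omega_0)$, plugging in and using bilinearity of $B$ gives
\[
    \partial_t w = -2B(\Omega, w) + R(t),
\]
where $R(t)$ collects all the terms not involving $w$. The point of choosing the ansatz $\Omega_0 - tB_1(\Omega_0) + t^2 B_2(\Omega_0)$ is precisely that, by the definitions \eqref{eq.B3}--\eqref{eq.B5}, the terms of order $t^0$, $t^1$, $t^2$ in $R(t)$ cancel: $-B_1(\Omega_0) + B_1(\Omega_0) = 0$ at order $t^0$ once one also accounts for $\partial_t$ of the polynomial part, $B(B_1(\Omega_0),\Omega_0)$ cancels the $\partial_t(-tB_1)$ term, etc. What survives is $R(t) = t^2 B_3(\Omega_0) + t^3 B_3'(\Omega_0) + t^4 \tilde B_3(\Omega_0)$ (up to the precise constants in \eqref{eq.B4}--\eqref{eq.B5}), so $R$ is $\mathcal{O}(t^2)$ with coefficients controlled in $H^s$ by $\|\Omega_0\|_{H^{s+3}}$ via \cref{lemm.bilinearForm}(iii). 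Note $w(0) = 0$ and, inspecting the $t$-derivative more carefully, $\partial_t w(0) = 0$ as well, so one actually expects $w = \mathcal{O}(t^3)$.

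Next I would run an $H^s$ energy estimate: $\tfrac{1}{2}\tfrac{d}{dt}\|w\|_{H^s}^2 = -2\langle B(\Omega,w),w\rangle_{H^s} + \langle R(t),w\rangle_{H^s}$. For the first term, split $B(\Omega,w) = B(w,w) + B(\Omega_0 - tB_1(\Omega_0)+t^2B_2(\Omega_0), w)$. The term $\langle B(w,w),w\rangle_{H^s}$ is bounded by $C\|w\|_{H^{1+\eps}}\|w\|_{H^s}^2 \leqslant C\|w\|_{H^s}^3$ (choosing $\eps < s$, since $1 + \eps < s$ may fail — here one should instead use \eqref{eq.Baba} with the roles arranged so only $\|w\|_{H^s}$ and one higher-regularity factor of the \emph{smooth} ansatz part appear, or absorb using that $w$ itself will be shown small). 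The linear-in-$w$ term $\langle B(P(t),w),w\rangle_{H^s}$, where $P(t) = \Omega_0 - tB_1(\Omega_0)+t^2B_2(\Omega_0)$, is bounded via \eqref{eq.Baba} by $C\|P(t)\|_{H^{s+1+\eps}}\|w\|_{H^s}^2$; and $\|P(t)\|_{H^{s+1+\eps}} \lesssim \|\Omega_0\|_{H^{s+1+\eps}} + \|\Omega_0\|_{H^{s+3+\eps}}^2 t + \ldots$ is finite almost surely and has finite $L^2_\omega$ moments since $s + 3 + \eps < \sigma$ (this is where $s \in (0,\sigma-3)$ is used, possibly shrinking $\eps$). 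The source term contributes $\langle R(t),w\rangle_{H^s} \leqslant \|R(t)\|_{H^s}\|w\|_{H^s} \leqslant Ct^2(\|\Omega_0\|_{H^{s+3}}^4 + \ldots)\|w\|_{H^s}$.

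Taking $L^2_\omega$ expectations (or working pathwise and then integrating) and using Cauchy--Schwarz in $\omega$ where products of norms appear, one gets an inequality of the form $\tfrac{d}{dt}\|w\|_{L^2_\omega H^s} \leqslant C(\omega\text{-moments})(\|w\|_{L^2_\omega H^s} + t^2) + (\text{cubic correction})$, valid on $[0,t_1]$ for $t_1$ small enough that the a priori bound \eqref{eq.invarianceBound} keeps $\|\Omega^\omega(t)\|_{H^\sigma}$ under control (needed to make sense of $\Omega(t) \in H^\sigma$ and of the energy identity, and to bound the cubic term $\|w\|_{H^s}^3$ by absorbing a factor of $\|w\|_{H^s} \lesssim \|\Omega(t)\|_{H^\sigma} + \|P(t)\|_{H^\sigma}$). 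Grönwall's inequality with $w(0) = 0$ then yields $\|w(t)\|_{L^2_\omega H^s} \leqslant Ct^3$ on $[0,t_1]$.

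The main obstacle is the rigorous handling of the $L^2_\omega$ estimate given that the cubic self-interaction term $\langle B(w,w),w\rangle_{H^s}$ is genuinely nonlinear and cannot be closed by Grönwall alone; one must use the a priori bound from \cref{prop.globalBound} — which relies essentially on invariance/quasi-invariance — to get a deterministic-in-time (log-growing) bound on $\|\Omega^\omega(t)\|_{H^\sigma}$, hence on $\|w^\omega(t)\|_{H^\sigma}$, and then feed that back as an $L^\infty$-in-$\omega$-localized control (via the Gaussian tail $\EE[e^{cC_\omega^2}] < \infty$) to close the estimate. Making the interplay between the pathwise energy estimate, the random a priori bound, and the $L^2_\omega$ averaging fully rigorous — in particular justifying all the integrations by parts and the differentiability of $t \mapsto \|w(t)\|_{H^s}^2$ at the $H^\sigma$ regularity level — is the delicate part; the algebraic cancellations producing $R(t) = \mathcal{O}(t^2)$ and the Sobolev product estimates are routine given \cref{lemm.bilinearForm}.
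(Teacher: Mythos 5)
Your overall architecture (derive the equation for $w$, observe the cancellation leaving an $\mathcal{O}(t^2)$ source, run an $H^s$ energy estimate using the a priori bounds from \cref{prop.globalBound}, then Gr\"onwall) matches the paper's. But there is a genuine gap at the point you yourself flag as delicate: the integration in $\omega$ at the end. After a pathwise Gr\"onwall, the linear-in-$w$ terms produce a factor $\exp(k_\omega t)$ where $k_\omega$ is the coefficient of $\|w\|_{H^s}$ in the differential inequality. By Fernique, $\mathbb{E}[e^{ck_\omega}]<\infty$ only if $k_\omega$ is at most \emph{quadratic} in the Gaussian norms of $\Omega_0$. Your plan puts all of $B(P(t),w)$ with $P(t)=\Omega_0-tB_1(\Omega_0)+t^2B_2(\Omega_0)$ into the linear term, with coefficient $\|P(t)\|_{H^{s+1+\eps}}\gtrsim \|\Omega_0\|_{H^{s+3+\eps}}^3$; likewise your suggestion to absorb $\|w\|_{H^s}^3$ by bounding one factor of $\|w\|_{H^s}$ by the a priori constant $D_\omega\gtrsim\|\Omega_0\|^3$ turns the cubic term into a linear one with a cubic-in-Gaussian coefficient. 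Either way Gr\"onwall yields $e^{c\|\Omega_0\|^3 t}$, which is not $\mu$-integrable for any $t>0$, so the $L^2_\omega$ norm of your final bound is infinite. (The alternative of writing a differential inequality directly for $\|w\|_{L^2_\omega H^s}$ fails too: $\mathbb{E}[k_\omega\|w\|_{H^s}^2]$ is not controlled by $C\,\mathbb{E}[\|w\|_{H^s}^2]$, so the inequality does not close.)

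The paper's proof resolves this with two devices you are missing. First, only the terms $B(\Omega_0,w)$ and $tB(B_1(\Omega_0),w)$ are kept as Gr\"onwall coefficients (these are at most quadratic in $\|\Omega_0\|$, hence exponentially integrable); the term $t^2B(B_2(\Omega_0),w)$ is instead converted into a \emph{source} term of size $D_\omega\|\Omega_0\|^3 t^2\|w\|_{H^s}$ by spending one factor of $\|w\|_{H^s}\leqslant D_\omega$ from the a priori bound, so the offending cubic power ends up only in the polynomially-integrable prefactor $K'_\omega$, never in the exponent. Second, the quadratic term $B(w,w)$ is handled by a bootstrap: one first integrates the crude bound $\frac{\dee}{\dee t}\|w\|_{H^s}\leqslant D'_\omega$ to get $\|w(t)\|_{H^s}\leqslant D'_\omega t$ (also at regularity $1+\eps$), and then re-inserts this so that $\|w\|_{H^s}\|w\|_{H^{1+\eps}}\leqslant (D'_\omega)^2t^2$ becomes part of the $\mathcal{O}(t^2)$ source rather than a cubic nonlinearity. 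With those two steps the final estimate reads $\|w(t)\|_{H^s}\leqslant \tfrac13 K'_\omega t^3 e^{k_\omega t}$ with $K'_\omega\in L^p_\omega$ and $\mathbb{E}[e^{ck_\omega}]<\infty$, and the $L^2_\omega$ norm is finite for $t$ small. You should restructure your energy estimate along these lines before the Gr\"onwall step.
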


Once this \textit{a priori} estimate on the remainder is obtained, one can reach a contradiction with invariance using the fact that under the assumption of invariance
\begin{align*}
\norm{\Omega_0^\omega}_{L^2_{\omega}H^s}^2 = \norm{\Omega^\omega(t)}_{L^2_{\omega}H^s}^2. 
\end{align*}

In the quasi-invariant case we will use a similar result: 

\begin{lemma}\label{lem.quasinvarDifference} Let $\sigma >0$ and $\mu\in\mathcal{M}^{\sigma}$ satisfying \cref{assump.quasinvar}. Let $s\in (0,\sigma]$. There exists a time $t_1=t_1(s,(a_n),\alpha,\eta)>0$ and $C=C(s,(a_n),\alpha,\eta)>0$ such that for all $0\leqslant t \leqslant t_1$ there holds 
\begin{equation}
    \label{eq.quasinvarB}
    \left\vert \mathbb{E}_{\mu_t}[\|\Omega\|^2_{H^s}] - \mathbb{E}_{\mu}[\|\Omega\|^2_{H^s}]  \right\vert \leqslant Ct^{\eta}, 
\end{equation}
where we recall that $\eta >2$ is the constant appearing in \cref{assump.quasinvar}.
\end{lemma}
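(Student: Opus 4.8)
The plan is to derive \eqref{eq.quasinvarB} from the quantitative quasi-invariance property \cref{assump.quasinvar}, using the layer-cake (distribution function) representation of $\mathbb{E}_\mu[\|\Omega\|_{H^s}^2]$ together with the Gaussian tail bounds on $\mu$. First I would write, for $\nu \in \{\mu, \mu_t\}$,
\[
\mathbb{E}_\nu[\|\Omega\|_{H^s}^2] = \int_0^\infty \nu\big(\{\Omega : \|\Omega\|_{H^s}^2 > \lambda\}\big)\,\dee\lambda,
\]
so that
\[
\big|\mathbb{E}_{\mu_t}[\|\Omega\|_{H^s}^2] - \mathbb{E}_\mu[\|\Omega\|_{H^s}^2]\big| \leqslant \int_0^\infty \big| \mu_t(A_\lambda) - \mu(A_\lambda)\big|\,\dee\lambda,
\]
where $A_\lambda = \{\|\Omega\|_{H^s}^2 > \lambda\}$. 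Since $s \leqslant \sigma$, \eqref{eq.HsigSupport} and the Gaussian (Fernique-type) concentration of $\mu$ give $\mu(A_\lambda) \leqslant e^{-c\lambda}$ for $\lambda$ large, with $c = c((a_n),s) > 0$; the same holds for $\mu_t$ via \cref{rem.timeReverse}, which yields $\mu_t(A_\lambda) \leqslant \mu(A_\lambda)^{(1+|t|^\eta)^{-\alpha}} \leqslant e^{-c\lambda(1+|t|^\eta)^{-\alpha}}$, so both distribution functions decay exponentially with a rate that is comparable uniformly for $t$ in a bounded interval.

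Next I would estimate the integrand pointwise. On a fixed measurable set $A$, \cref{assump.quasinvar} gives $\mu_t(A) \leqslant \mu(A)^{(1+|t|^\eta)^{-\alpha}}$, and \cref{rem.timeReverse} gives $\mu(A) \leqslant \mu_t(A)^{(1+|t|^\eta)^{-\alpha}}$, i.e. $\mu_t(A) \geqslant \mu(A)^{1/(1+|t|^\eta)^{-\alpha}} = \mu(A)^{(1+|t|^\eta)^{\alpha}}$. Writing $\mu(A) = e^{-p}$ with $p = p(\lambda) \geqslant 0$ and $\theta = \theta(t) = (1+|t|^\eta)^{\alpha} \geqslant 1$, we get the two-sided bound $e^{-p\theta} \leqslant \mu_t(A) \leqslant e^{-p/\theta}$, hence
\[
|\mu_t(A) - \mu(A)| \leqslant e^{-p/\theta} - e^{-p\theta} \leqslant p\,(\theta - \theta^{-1})\,e^{-p/\theta} \leqslant C\,p\,|t|^\eta\, e^{-p/\theta}
\]
for $|t| \leqslant 1$, using $\theta - \theta^{-1} \lesssim |t|^\eta$ near $t=0$ and the mean value theorem applied to $x \mapsto e^{-px}$ on $[\theta^{-1},\theta]$. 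Integrating over $\lambda$: on the region where $p(\lambda) = -\log\mu(A_\lambda)$ is bounded (i.e. $\lambda \lesssim 1$), the factor $p\,|t|^\eta$ is bounded by $C|t|^\eta$ and the $\lambda$-integral is over a bounded set; on the region $\lambda \gtrsim 1$ where $p(\lambda) \geqslant c\lambda$, the factor $p\,e^{-p/\theta} \leqslant p\,e^{-c\lambda/2}$ (for $t$ small, so $\theta \leqslant 2$) is integrable in $\lambda$ with a $t$-independent bound, and it is still multiplied by $|t|^\eta$. Collecting the two regions gives $\big|\mathbb{E}_{\mu_t}[\|\Omega\|_{H^s}^2] - \mathbb{E}_\mu[\|\Omega\|_{H^s}^2]\big| \leqslant C t^\eta$ for $0 \leqslant t \leqslant t_1$, which is \eqref{eq.quasinvarB}.

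The main obstacle, such as it is, is bookkeeping rather than anything deep: one must make sure the Gaussian tail rate $c$ and the implied constants are uniform in $t$ over the interval $[0,t_1]$ — this is why one restricts to $t_1 \leqslant 1$ so that $\theta(t) = (1+t^\eta)^\alpha$ stays in, say, $[1,2]$ — and one must handle the small-$\lambda$ and large-$\lambda$ ranges of the layer-cake integral separately, since near $\lambda = 0$ we have $p(\lambda) \to 0$ (so the bound $|\mu_t(A_\lambda)-\mu(A_\lambda)| \leqslant C p|t|^\eta$ is what controls things, the exponential decay being irrelevant there) whereas for large $\lambda$ it is the exponential decay $e^{-c\lambda}$ that makes the integral converge. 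A minor point is that one should check $\lambda \mapsto \mu(A_\lambda)$ is genuinely a tail function of a Gaussian-measurable seminorm so that the exponential bound applies; this follows from $\|\Omega\|_{H^s} \leqslant \|\Omega\|_{H^\sigma}$ together with the fact that $\|\cdot\|_{H^\sigma}$ is $\mu$-integrable by \eqref{eq.HsigSupport} and Fernique's theorem. No use of the Euler dynamics beyond the abstract quasi-invariance inequality is needed.
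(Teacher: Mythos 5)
Your proof is correct, and while it shares the paper's skeleton (layer-cake representation of the second moment, Gaussian upper tail bound, and the two-sided quasi-invariance inequality from \cref{assump.quasinvar} and \cref{rem.timeReverse}), the pointwise estimate is handled by a genuinely different and cleaner device. The paper bounds $\mu_t(A_\lambda)-\mu(A_\lambda)\leqslant \mu(A_\lambda)\bigl(\exp(2\alpha t^{\eta}\log(\mu(A_\lambda)^{-1}))-1\bigr)$ and therefore must control $\log(\mu(A_\lambda)^{-1})$ from above; this forces it to establish the \emph{lower} tail bounds \eqref{eq.lower1} and \eqref{eq.lower2} on $\mu(A_\lambda)$ (via the independence/one-coordinate argument), and then to run a two-regime computation with the cutoff $\lambda\sim t^{-\eta/2}$. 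Your sandwich $e^{-p\theta}\leqslant \mu_t(A_\lambda),\mu(A_\lambda)\leqslant e^{-p/\theta}$ with $\theta=(1+t^{\eta})^{\alpha}$, combined with the mean value theorem for $x\mapsto e^{-px}$ on $[\theta^{-1},\theta]$, gives $\abs{\mu_t(A_\lambda)-\mu(A_\lambda)}\leqslant C t^{\eta}\, p\, e^{-p/\theta}$, and the factor $p e^{-p/\theta}$ is uniformly bounded by $\theta/e$ and, once $p\geqslant c\lambda\geqslant\theta$ (from the Gaussian upper tail), is dominated by $c\lambda e^{-c\lambda/\theta}$ by monotonicity of $x\mapsto xe^{-x/\theta}$ on $[\theta,\infty)$. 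So you never need a lower bound on $\mu(A_\lambda)$, and you get both signs of the difference in one stroke, whereas the paper treats $\EE_{\mu}-\EE_{\mu_t}$ separately. The only place your write-up is slightly loose is the line ``$p\,e^{-p/\theta}\leqslant p\,e^{-c\lambda/2}$ ... is integrable in $\lambda$'': since the Gaussian tail only gives $p\geqslant c\lambda$ and no upper bound on $p$, the right-hand side as written is not a function of $\lambda$ alone; you should instead invoke the monotonicity of $x\mapsto xe^{-x/\theta}$ past $x=\theta$ (or the factorization $pe^{-p/\theta}\leqslant (2\theta/e)e^{-p/(2\theta)}\leqslant (2\theta/e)e^{-c\lambda/(2\theta)}$) to conclude. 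This is a one-line repair, not a gap in the idea, and your argument is arguably more economical than the one in the paper.
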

\begin{proof}
Recall that there exists $c_1>0$ such that for all $\lambda >0$, there holds 
\begin{equation}
    \label{eq.upper}
    \mu(\|\Omega\|_{H^s}>\lambda) \leqslant e^{-c_1\lambda^2}. 
\end{equation}
We will use also use two lower bounds: there exists $\lambda _0 >0$ such that for all $\lambda \geqslant \lambda_0$ we have
\begin{equation}
    \label{eq.lower1}
    \mu(\|\Omega\|_{H^s}>\lambda) \geqslant e^{-c_2\lambda^2},
\end{equation}
and there exists $c_3>0$ such that for any $\lambda \in [0,\lambda_0]$ there holds
\begin{equation}
    \label{eq.lower2}
   \mu(\|\Omega\|_{H^s}>\lambda) \geqslant c_3\lambda^2. 
\end{equation}

Let us start with explaining how to obtain~\eqref{eq.lower1}: assume that $a_{(1,0)} \neq 0$ (this can be done without loss of generality, up to considering some $n\in\mathbb{Z}^2$ such that $a_n \neq 0$). Write $g_{(1,0)}^{\omega}=g^{\omega} + i h^{\omega}$ where $g^{\omega}$ and $h^{\omega}$ are independent standard real-valued Gaussian random variables. Then $a_{(1,0)}g^{\omega}_{(1,0)} + a_{(-1,0)}g^{\omega}_{(-1,0)}=2a_{(1,0)}g^{\omega}$ so that we infer that  
\begin{align}
    \label{eq.subset}
       \left\{|g^{\omega}|>\frac{\lambda}{|a_{1,0}|}\right\} \times \left\{\left\|\sum_{n \notin \{(1,0),(-1,0)\}} a_ng_n^{\omega}\right\|_{H^s} \leqslant \lambda\right\} \subset \{\|\Omega\|_{H^s}>\lambda\}.
\end{align}
Using independence we find 
\begin{align*}
    \mu\left(\|\Omega\|_{H^s}>\lambda\right) &\geqslant \mathbb{P}\left(|g^{\omega}|>\frac{\lambda}{|a_{1,0}|}\right)\mathbb{P}\left(\left\|\sum_{n \notin \{(1,0),(0,1)\}} a_ng_n^{\omega}\right\|_{H^s} \leqslant \lambda\right) \\
    & \geqslant  \mathbb{P}\left(|g^{\omega}|>\frac{\lambda}{|a_{1,0}|}\right) (1-e^{-c_1\lambda^2}).
\end{align*}
The conclusion now follows from the fact that for $\lambda$ large enough there holds $(1-e^{-c_1\lambda^2})\geqslant \frac{1}{2}$ for $\lambda$ large enough and the fact that for $\Lambda$ large enough there holds
\begin{equation}
    \label{eq.lower3}
    \mathbb{P}\left(|g^{\omega}|>\Lambda\right) \geqslant e^{-c_2\Lambda^2}.
\end{equation}
This last inequality follows from the fact that: 
\[
    \int_{\Lambda}^{\infty}e^{-u^2}\,\mathrm{d}u = \frac{e^{-\Lambda^2}}{2\Lambda} -
    \int_{\Lambda}^{\infty} \frac{e^{-u^2}}{2u^2}\,\mathrm{d}u \sim \frac{e^{-\Lambda^2}}{2\Lambda} \geqslant e^{-2\Lambda^2},
\]
as $\Lambda \to \infty$, which results from the fact that the remainder integral is negligible with respect to the left-hand side integral. 

To prove~\eqref{eq.lower2}, we use~\eqref{eq.subset} and that $\lambda \leqslant \lambda_0$ to bound
\[
    \mu\left(\|\Omega\|_{H^s}>\lambda\right) \geqslant \mathbb{P}\left(|g_{1,0}^{\omega}|>\frac{\lambda_0}{|a_{1,0}|}\right)(1-e^{-c_1\lambda^2}) \geqslant c_3\lambda^2
\]
for a small enough $c_3=c_3(\lambda_0)>0$ for all $\lambda\in[0,\lambda_0]$.

In order to make the following computations simpler in the proof of the lemma, let us observe that since $(1+|t|^\eta)^{-\alpha}=1-\alpha |t|^{\eta} + \mathcal{O}(|t|^{2\eta})$ as $t \to 0$, one can write 
\begin{equation}
    \label{eq.defK}
    (1+|t|^{\eta})^{-\alpha}=1-\alpha |t|^{\eta}(1+K(t)),
\end{equation}
where $K(t)$ satisfies $\frac{1}{2}\leqslant 1 + K(t) \leqslant 2$ for all $t\in[-t_0,t_0]$, with $t_0=t_0(\alpha,\eta)>0$. We turn to the proof of the lemma and start by writing 
\begin{align*}
     \mathbb{E}_{\mu_t}[\|\Omega\|^2_{H^s}] - \mathbb{E}_{\mu}[\|\Omega\|^2_{H^s}] = 2\int_0^{\infty} \lambda (\mu_t(A_{\lambda})-\mu(A_{\lambda})) \dee \lambda
\end{align*}
where $A_\lambda = \{\Omega\in H^s: \|\Omega\|_{H^s}>\lambda\}$. Next, we use~\eqref{eq.quasinvarDef} as well as~\eqref{eq.defK}, and obtain  
 \begin{align*}
     \mathbb{E}_{\mu_t}[\|\Omega\|^2_{H^s}] - \mathbb{E}_{\mu}[\|\Omega\|^2_{H^s}] &\leqslant 2\int_0^{\infty} \lambda (\mu(A_{\lambda})^{1-\alpha t^{\eta}(1+K(t))}-\mu(A_{\lambda})) \dee \lambda \\
     & \leqslant 2\int_0^{\infty} \lambda (\mu(A_{\lambda})^{1-2\alpha t^{\eta}}-\mu(A_{\lambda})) \dee \lambda \\
     & = 2\int_0^{\infty}  \mu(A_{\lambda})\left(\exp\left(2\alpha t^{\eta}\log(\mu(A_{\lambda})^{-1}\right)-1\right) \lambda\dee \lambda. 
\end{align*}

To study the behavior of this integral as $t \to 0$ we start with the contribution $\lambda \in [0,\lambda_0]$ for which~\eqref{eq.lower2} implies: 
\[
    \int_0^{\lambda_0} \mu(A_{\lambda})\left(\exp\left(2\alpha t^{\eta}\log(\mu(A_{\lambda})^{-1})\right)-1\right) \lambda\dee \lambda \leqslant \int_0^{\lambda_0} \left(\exp(2\alpha t^{\eta}\log(c_3^{-1}\lambda^{-2}))-1\right) \lambda \dee \lambda.
\]
Writing $C=c_3^{-1}$ which we can take larger than $1$ and $\lambda_0^2$, we arrive at  
\[
    \int_0^{\lambda_0} \mu(A_{\lambda})\left(\exp\left(2\alpha t^{\eta}\log(\mu(A_{\lambda})^{-1})\right)-1\right) \lambda\dee \lambda \leqslant \int_0^{\lambda_0} \left(C^{2\alpha t^{\eta}}\lambda^{-4\alpha t^{\eta}}-1\right)\lambda\dee \lambda.
\]
Note that we only consider small values of $t$, namely $t\leqslant t_1(\alpha, \eta)$ to ensure the convergence of the integral. Let us now write 
\[
    \int_0^{\lambda_0} \left(C^{2\alpha t^{\eta}}\lambda^{-4\alpha t^{\eta}}-1\right)\lambda\dee \lambda \leqslant \lambda_0t^{\eta} \int_0^{\lambda_0} \frac{(C\lambda^{-2})^{2\alpha t^{\eta}}-1}{t^{\eta}}\dee \lambda =: \lambda_0t^{\eta} H(t^{\eta}).
\]
We immediately compute 
\[  
    H(s)=C^{2\alpha s}\frac{\lambda_0^{1-4\alpha s}}{s-4\alpha s^2} - \frac{\lambda_0}{s}
\]
which satisfies $\sup_{s\in(0,1]} |H(s)| < \infty$, and therefore 
\begin{align}
    \label{eq.ineq1}
    \int_0^{\lambda_0} \mu(A_{\lambda})\left(\exp\left(2\alpha t^{\eta}\log(\mu(A_{\lambda})^{-1})\right)-1\right) \lambda\dee \lambda \leqslant C(\eta,\alpha)t^{\eta}
\end{align}
for $0\leqslant t_1(\alpha,\eta)$. 

We turn to the estimation of the other part of the integral using~\eqref{eq.lower1} to obtain  
\begin{align*}
     \int_{\lambda_0}^{\infty} \mu(A_{\lambda})\left(\exp\left(2\alpha t^{\eta}\log(\mu(A_{\lambda})^{-1}\right)-1\right) \lambda\dee \lambda &\leqslant \int_{\lambda_0}^{\infty} e^{-c_1\lambda^2}\left(\exp\left(2c_2\alpha t^{\eta}\lambda^2\right)-1\right) \lambda\dee \lambda \\ 
     & \leqslant  \int_{0}^{\infty} e^{-c_1\lambda^2}\left(\exp\left(2c_2\alpha t^{\eta}\lambda^2\right)-1\right) \lambda\dee \lambda,
\end{align*}
integral that we split depending on whether $2c_2\alpha t^{\eta}\lambda ^2>1$ or not. Denoting $C^2=2c_2\alpha$, we have 
\begin{align*}
     \int_{0}^{\infty} e^{-c_1\lambda^2}\left(\exp\left(2c_2\alpha t^{\eta}\lambda^2\right)-1\right)& \lambda\dee \lambda \leqslant \int_{0}^{Ct^{-\eta/2}} e^{-c_1\lambda^2}\left(\exp\left(2c_2\alpha t^{\eta}\lambda^2\right)-1\right) \lambda\dee \lambda \\
     &+\int_{Ct^{-\eta /2}}^{\infty} e^{-c_1\lambda^2}\left(\exp\left(2c_2\alpha t^{\eta}\lambda^2\right)-1\right) \lambda\dee \lambda \\
     & \leqslant  2ec_2\alpha t^{\eta}\int_{0}^{Ct^{-\eta/2}}e^{-c_1\lambda_2}\lambda^3\dee \lambda + \int_{Ct^{-\eta /2}}^{\infty} e^{(2c_2\alpha t^{\eta}-c_1)\lambda^2}\lambda\dee \lambda,
\end{align*}
and we can now use 
\[
    2ec_2\alpha t^{\eta}\int_{0}^{Ct^{-\eta/2}}e^{-c_1\lambda^2}\lambda^3\dee \lambda \lesssim t^{\eta},  
\]
and that since we can restrict to small $t$ such that $2c_2\alpha t^{\eta}\leqslant c_1/2$ we have 
\begin{align*}
   \int_{Ct^{-\eta /2}}^{\infty} e^{(2c_2\alpha t^{\eta}-c_1)\lambda^2}\lambda\dee \lambda & \leqslant \int_{Ct^{-\eta /2}}^{\infty} e^{-\frac{c_1}{2}\lambda^2}\lambda \dee \lambda \\
   &\leqslant e^{-\frac{c_1C^2}{4}t^{-\eta}}\int_{Ct^{-\eta /2}}^{\infty} e^{-\frac{c_1}{4}\lambda^2}\lambda \dee \lambda \\
   &= \mathcal{O}(e^{-\frac{c_1C^2}{4}t^{-\eta}}) = \mathcal{O}(t^{\eta}). 
\end{align*}
The estimate of $\EE_{\mu}[\|\Omega\|^2_{H^s}] - \EE_{\mu_t}[\|\Omega\|^2_{H^s}]$ is performed using the same techniques as above. 
\end{proof}

We postpone the proof of \cref{prop.orthogonality} to \cref{sec.ortho} and the proof of \cref{prop.estW} to \cref{sec.perturbative}, and first finish the proof of the main theorem. 

\begin{proof}[Proof of \cref{thm.main}] 
    We first use \cref{prop.orthogonality} and a Taylor expansion to write 
    \begin{align*}
        \|\Omega^{\omega}(t)-w^{\omega}(t)\|_{L^2_{\omega}H^s} &= \sqrt{\|\Omega_0^{\omega}\|_{L^2_{\omega}H^s}^2 + \gamma t^2 +\delta t^4} \\ 
        & =\|\Omega_0^{\omega}\|_{L^2_{\omega}H^s} + \frac{\gamma}{2\|\Omega_0\|_{L^2_{\omega}H^s}}t^2 + \mathcal{O}(t^4)\,,      
    \end{align*}
    so that by the triangle inequality and \cref{prop.estW} there holds: 
    \[
        \frac{\gamma}{2\|\Omega_0^{\omega}\|_{L^2_{\omega}H^s}}t^2 + \mathcal{O}(t^3)\ \leqslant \|\Omega^{\omega}(t)\|_{L^2_{\omega}H^s} - \|\Omega_0^{\omega}\|_{L^2_{\omega}H^s} \leqslant   \frac{\gamma}{2\|\Omega_0^{\omega}\|_{L^2_{\omega}H^s}}t^2 + \mathcal{O}(t^3)\,
    \]
    for $0 \leqslant t \leqslant t_1$, so that for times $0 \leqslant t \leqslant t_2=t_2(s,\sigma,(a_n)) \ll t_1$ we can write 
    \begin{equation}
        \label{eq.boundMain}
        c_1\frac{\gamma}{\|\Omega_0^{\omega}\|_{L^2_{\omega}H^s}}t^2\ \leqslant \|\Omega^{\omega}(t)\|_{L^2_{\omega}H^s} - \|\Omega^{\omega}_0\|_{L^2_{\omega}H^s} \leqslant   c_2\frac{\gamma}{\|\Omega^{\omega}_0\|_{L^2_{\omega}H^s}}t^2\,
    \end{equation}
with $(c_1,c_2)=(3/4,1/4)$ if $\gamma <0$ and $(1/4,3/4)$ if $\gamma >0$.

In the invariant case there holds 
\begin{equation}
    \label{eq.invariantEqual}
    \|\Omega^{\omega}(t)\|_{L^2_{\omega}H^s} - \|\Omega^{\omega}_0\|_{L^2_{\omega}H^s} = 0  
\end{equation}
which contradicts~\eqref{eq.boundMain}. 

In the quasi-invariant case, replace~\eqref{eq.invariantEqual} with~\eqref{eq.quasinvarB} to obtain a contradiction.
\end{proof}

\section{The orthogonality argument}\label{sec.ortho}
The goal of this section is to prove \cref{prop.orthogonality}. 

For the sake of simplicity we write $\Omega$ in place of $\Omega_0^{\omega}$. This will not lead to any confusion since the arguments in this section don't involve time evolution. We write
\[
    \Omega = \sum_{n\in\mathbb{Z}^2}a_ng_n^{\omega}e^{in\cdot x} = \frac{1}{(2\pi)^2}\sum_{n \in \mathbb{Z}^2} \hat{\Omega}_n e^{in\cdot x}.    
\] 
Similarly, let us write $U[\Omega]=(U^1,U^2)$ where $U^j=\sum_{n\in\mathbb{Z}^2}\hat{U}_n^je^{in\cdot x}$. With these notations, the relationship $\nabla \wedge U[\Omega] = \Omega$ reads $\hat{\Omega}_n= i\left(n_1\hat{U}_n^2 - n_2\hat{U}^1_n\right)$  and the relation $\nabla \cdot U[\Omega] = 0$ reads $n_1 \hat{U}^1_n + n_2 \hat{U}^2_n=0$
so that we obtain $\hat{U}^1_n = - \frac{n_2}{i|n|^2}\hat{\Omega}_n$ and $\hat{U}^2_n = \frac{n_1}{i|n|^2} \hat{\Omega}_n$.
Writing that $B_1(\Omega)=U^1\partial_1\Omega + U^2\partial_2\Omega$ we obtain 
\begin{align*}
    \widehat{B_1(\Omega)}(n) & = \sum_{k+m=n}\frac{m\cdot k^{\perp}}{|k|^2}\hat{\Omega}_k\hat{\Omega}_m  = \frac{1}{2}\sum_{k+m=n} m\cdot k^{\perp}\left(\frac{1}{|k|^2} - \frac{1}{|m|^2}\right)\hat{\Omega}_k\hat{\Omega}_m\,,
\end{align*}
and more generally
\[
    \widehat{B(\alpha, \beta)}(n)=\frac{1}{2} \sum_{k+m=n} m\cdot k^{\perp}\left(\frac{1}{|k|^2} - \frac{1}{|m|^2}\right)\hat{\alpha}_k\hat{\beta}_m\,. 
\]

The following computation is a key step in the proof of \cref{prop.orthogonality}: 

\begin{lemma}\label{lemm.computation1} Assume that $(a_n)_{n\in\mathbb{Z}^2} \in h^{s+1}$ for some $s\geqslant 0$. Let us recall that $B_2(\Omega)=B(\Omega,B_1(\Omega,\Omega))$. Then there holds 
\[
    \mathbb{E}\left[\langle \Omega, B_2(\Omega)\rangle_{H^s} \right] = \frac{1}{2(2\pi)^4}\sum_{n,q \in \mathbb{Z}^2} \langle n \rangle ^{2s}|a_n|^2|a_q|^2 (q \cdot n^{\perp})^2 \left(\frac{1}{|n|^2} - \frac{1}{|q|^2}\right)\left(\frac{1}{|q|^2}-\frac{1}{|q+n|^2}\right)\,.
\]
\end{lemma}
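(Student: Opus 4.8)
The plan is to compute $\mathbb{E}[\langle \Omega, B_2(\Omega)\rangle_{H^s}]$ by expanding everything in Fourier series and using the Wick/Isserlis formula for Gaussian moments. First I would write out $\widehat{B_2(\Omega)}(n) = \widehat{B(\Omega, B_1(\Omega))}(n)$ using the bilinear formula derived just above the lemma: namely $\widehat{B_2(\Omega)}(n) = \tfrac12\sum_{k+m=n} m\cdot k^\perp\left(\tfrac{1}{|k|^2}-\tfrac{1}{|m|^2}\right)\hat\Omega_k \widehat{B_1(\Omega)}_m$, and then substitute $\widehat{B_1(\Omega)}_m = \tfrac12 \sum_{p+q=m} q\cdot p^\perp\left(\tfrac{1}{|p|^2}-\tfrac{1}{|q|^2}\right)\hat\Omega_p\hat\Omega_q$. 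This produces a quadruple sum over frequencies $k,p,q$ (with $m = p+q$, $n = k+p+q$) whose summand is an explicit multiplier times the Gaussian monomial $\hat\Omega_k\hat\Omega_p\hat\Omega_q$. Pairing this with $\langle\Omega,\cdot\rangle_{H^s}$ brings in one more factor $\langle n\rangle^{2s}\overline{\hat\Omega_n}$ (using the $H^s$ inner product in Fourier variables, with the $(2\pi)^{-2}$ normalization factors accounted for — this is the source of the $(2\pi)^{-4}$).

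Next I would take the expectation. Since $\hat\Omega_n = (2\pi)^2 a_n g_n$ with $g_n$ complex Gaussians satisfying $g_{-n} = \bar g_n$ and otherwise independent, the fourth moment $\mathbb{E}[\overline{\hat\Omega_n}\hat\Omega_k\hat\Omega_p\hat\Omega_q]$ is nonzero only when the four frequencies pair up. Because of the constraint $n = k+p+q$, the relevant pairings are: $\{n,k\}$ with $k = n$ forcing $p = -q$; $\{n,p\}$ with $p=n$ forcing $k=-q$; and $\{n,q\}$ with $q=n$ forcing $k=-p$. (The degenerate case where several frequencies coincide, e.g. all equal, contributes lower-order terms that vanish or are absorbed; I would note that $m\cdot k^\perp$-type multipliers kill the truly degenerate collinear contributions.) In the pairing $p=-q$ one has $m = p+q = 0$, and then the factor $\left(\tfrac{1}{|k|^2}-\tfrac{1}{|m|^2}\right)$ is singular — but the prefactor $m\cdot k^\perp = 0$ when $m=0$, so that whole family of terms vanishes (this matches the fact that $\widehat{B_1(\Omega)}_0 = 0$). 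So only the pairings $p = n$ (equivalently $k=-q$) and $q=n$ (equivalently $k=-p$) survive; by the symmetry $p\leftrightarrow q$ of $B_1$ these two give equal contributions, producing the overall factor that combines with $\tfrac14$ to give $\tfrac{1}{2(2\pi)^4}$ after the $(2\pi)$ bookkeeping.

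After imposing, say, $q = n$, $k = -p$ (rename the free summation index $p \mapsto$ something; eventually one is left with two free indices which I relabel as $n$ and $q$ to match the statement), the multiplier becomes: from the outer $B$, $m\cdot k^\perp\left(\tfrac{1}{|k|^2}-\tfrac{1}{|m|^2}\right)$ with $k = -p$, $m = p+q$; from the inner $B_1$, $q\cdot p^\perp\left(\tfrac{1}{|p|^2}-\tfrac{1}{|q|^2}\right)$; times $\langle n\rangle^{2s}$; times $|a_n|^2|a_q|^2$ from $\mathbb{E}[|g_n|^2]\mathbb{E}[|g_q|^2] = $ const. Carefully simplifying the geometric factors — using $|{-p}| = |p|$, $(p+q)\cdot p^\perp = q\cdot p^\perp$, and matching $q\cdot n^\perp$ (with $n$ the remaining index) against $q\cdot p^\perp$ after the final relabeling — collapses the product of the two dot-product multipliers into $(q\cdot n^\perp)^2$ and the three reciprocal-modulus factors into $\left(\tfrac{1}{|n|^2}-\tfrac{1}{|q|^2}\right)\left(\tfrac{1}{|q|^2}-\tfrac{1}{|q+n|^2}\right)$, which is exactly the claimed summand. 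The convergence of the resulting sum is guaranteed by the $(a_n)\in h^{s+1}$ hypothesis: the multiplier grows at most like $\langle n\rangle^{2s}$ times bounded geometric factors (the Biot–Savart kernels are bounded by $1$), so $\sum \langle n\rangle^{2s}|a_n|^2 |q\cdot n^\perp|^2 |a_q|^2 \lesssim \|(a_n)\|_{h^{s+1}}^2 \|(a_n)\|_{h^1}^2 < \infty$.

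The main obstacle I anticipate is the careful bookkeeping of the three pairing cases together with the $(2\pi)$-power normalizations, and in particular making sure the $m=0$ (singular) terms genuinely cancel against the vanishing prefactor rather than requiring a separate regularization argument; a secondary subtlety is verifying that the two surviving pairings really do coincide after relabeling so that one gets a clean factor of $2$ rather than an asymmetric expression. Everything else is a direct, if somewhat lengthy, symbolic computation.
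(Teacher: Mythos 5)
Your proposal is correct and follows essentially the same route as the paper: expand $B_2$ in Fourier via the bilinear multiplier, apply the Wick/pairing rule to the Gaussian fourth moment $\mathbb{E}[\bar g_n g_k g_p g_q]$, discard the $p=-q$ pairing (killed by the vanishing multiplier at $m=0$, equivalently $q\cdot p^{\perp}=0$), and simplify the two surviving, symmetric pairings to the stated summand. The only cosmetic difference is that the paper first splits $B_2$ into the two non-symmetrized pieces $U[\Omega]\cdot\nabla B_1$ and $U[B_1]\cdot\nabla\Omega$ and treats four sub-contributions, whereas you keep the symmetrized multiplier throughout and exploit the $p\leftrightarrow q$ symmetry of the inner $B_1$ to get a clean factor of $2$ — the same computation organized slightly more economically.
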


\begin{proof} We will decompose $B_1(\Omega)$ to obtain the claimed result. More precisely, we write: 
\begin{align*}
    \mathbb{E}\left[\langle \Omega, B_2(\Omega)\rangle_{H^s} \right] &= \frac{1}{2}\mathbb{E}\left[\langle \Omega, U[\Omega]\cdot \nabla B(\Omega,\Omega)\rangle_{H^s}\right] + \frac{1}{2}\mathbb{E}\left[\langle \Omega, U[B(\Omega,\Omega)]\cdot \nabla \Omega\rangle_{H^s}\right] \\ 
    &=: \frac{1}{2}(E_1 + E_2).
\end{align*}

Writing 
\begin{align*}
    \widehat{U[\Omega]\cdot \nabla B(\Omega,\Omega)}(n) & = \sum_{k + m = n} \frac{m\cdot k^{\perp}}{|k|^2} a_kg_k^{\omega} \widehat{B(\Omega,\Omega)}(m) \\
    & = \sum_{\substack{k + m = n \\p + q = m}} \frac{m\cdot k^{\perp}q\cdot p^{\perp}}{|k|^2|p|^2} g_{k}^{\omega}g_p^{\omega}g_q^{\omega}a_ka_pa_q,
\end{align*}
and using Plancherel's theorem, we find 
\begin{equation}
    \label{eq.E1}
    (2\pi)^4E_1 = \sum_{n \in \mathbb{Z}^2}\langle n \rangle^{2s}\sum_{\substack{k + m = n \\p + q = m}} \frac{m\cdot k^{\perp}q\cdot p^{\perp}}{|k|^2|p|^2} \bar{a}_na_ka_pa_q\mathbb{E}[\bar{g}_n^{\omega}g_{k}^{\omega}g_p^{\omega}g_q^{\omega}].
\end{equation}
Similarly 
\begin{align*}
    \mathcal{F}\left(U[B(\Omega,\Omega)]\cdot \nabla\Omega\right)(n) &= \sum_{k+m =n} \frac{m\cdot k^{\perp}}{|k|^2} \widehat{B(\Omega,\Omega)}(k)a_mg_m^{\omega} \\
    & = \sum_{\substack{k + m = n \\ p + q = k}} \frac{m\cdot k^{\perp}}{|k|^2}\frac{q \cdot p^{\perp}}{|p|^2}g^{\omega}_mg_p^{\omega}g_q^{\omega}a_ma_pa_q
\end{align*}
so that 
\begin{equation}
    \label{eq.E2}
    (2\pi)^4E_2 = \sum_{n\in \mathbb{Z}^2}\langle n \rangle^{2s} \sum_{\substack{k + m = n \\ p + q = k}} \frac{m\cdot k^{\perp}}{|k|^2}\frac{q \cdot p^{\perp}}{|p|^2} \bar{a}_na_ma_pa_q\mathbb{E}[\bar{g}_n^{\omega}g^{\omega}_mg_p^{\omega}g_q^{\omega}].
\end{equation}

Now we make the following crucial observation: because the $g_j^{\omega}$ are complex Gaussian random variables, there holds 
\[
    (q\cdot p^{\perp})\mathbb{E}[\bar{g}_n^{\omega}g_{k}^{\omega}g_p^{\omega}g_q^{\omega}] = 0
\]
unless the elements of $\{n, k, p, q\}$ are paired two-by-two (note that the cases $|n|=|k|=|p|=|q|$ are such that $q\cdot p^{\perp}=0$). There are two possible parings: the first case is $(n,k)=(p,-q)$, $n\neq \pm k$, and the second case is $(n,k)=(q,-p)$, $n\neq \pm k$. Remark that the pairing $(n,p)=(k,-q)$ is such that $q\cdot p^{\perp}=0$ therefore giving a zero contribution.

Let us write $(2\pi)^4E_1 = (2\pi)^4(\sum_{n\in\mathbb{Z}^2} \langle n \rangle^{2s}(E_{11}(n)+E_{12}(n))$ where $E_{11}(n)$ (resp.\ $E_{12}(n)$) corresponds to the first (resp.\ second) paring case. For the first contribution we find
\[
    E_{11}(n) = \sum_{q \in \mathbb{Z}^2} \frac{(q \cdot n^{\perp})^2}{|n|^2|q|^2}|a_n|^2|a_q|^2,    
\]
where we have used that $m\cdot k^{\perp}=q \cdot n^{\perp}$.

Similarly  
\[
    E_{12}(n)= -\sum_{p\in \mathbb{Z}^2} \frac{(p \cdot n^{\perp})^2}{|p|^4}|a_p|^2|a_n|^2,
\]
so that finally (re-labeling $p$ to $q$ and summing both contributions) we obtain
\[
    (2\pi)^4E_1=-\sum_{n,q \in \mathbb{Z}^2}\langle n\rangle^{2s}(q \cdot n^{\perp})^2|a_n|^2|a_q|^2 \left(\frac{1}{|q|^4}-\frac{1}{|q|^2|n|^2}\right). 
\]
Writing similarly $(2\pi)^4E_2 = (2\pi)^4(\sum_{n\in\mathbb{Z}^2} \langle n \rangle^{2s}(E_{21}(n)+E_{22}(n))$ we find 
\[
    A_{21}(n)=-\sum_{q\in \mathbb{Z}^2}\frac{(q \cdot n ^{\perp})^2}{|n+q|^2|n|^2}|a_n|^2|a_q|^2  
\]
and 
\[
    A_{22}(n)=\sum_{p\in \mathbb{Z}^2} \frac{(p\cdot n^{\perp})^2}{|p+n|^2|p|^2}|a_n|^2|a_p|^2    
\]
so that after the change of variables $(n,p) \mapsto (q,n)$ in $A_{22}$, we obtain
\[
    A_2 = -\sum_{n,q \in \mathbb{Z}^2}\langle n\rangle^{2s}(q\cdot n^{\perp})^2|a_n|^2|a_q|^2\left(\frac{1}{|q+n|^2|n|^2} - \frac{1}{|q+n|^2|q|^2}\right),    
\]
which is enough to conclude the proof.
\end{proof}

We also need the simpler computation:

\begin{lemma}\label{lemm.computation2} Assuming $(a_n)_{n\in\mathbb{Z}^2} \in h^{s+1}$ for some $s\geqslant 0$, there holds
\[
    \mathbb{E}[\|B_1(\Omega)\|_{H^s}^2] = \frac{1}{(2\pi)^4}\sum_{n,q \in \mathbb{Z}^2} \langle n + q \rangle^{2s} \frac{(q\cdot n^{\perp})^2}{2} \left(\frac{1}{|q|^2}-\frac{1}{|n|^2}\right)^2|a_q|^2|a_n|^2\,. 
\]
\end{lemma}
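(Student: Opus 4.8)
The plan is to evaluate the right-hand side directly in Fourier variables, in close parallel with (but more simply than) the proof of \cref{lemm.computation1}. Since $B_1(\Omega)$ is real-valued, Plancherel's theorem gives
\[
    \mathbb{E}\bigl[\|B_1(\Omega)\|_{H^s}^2\bigr] = \sum_{n\in\mathbb{Z}^2}\langle n\rangle^{2s}\,\mathbb{E}\bigl[\,\bigl|\widehat{B_1(\Omega)}(n)\bigr|^2\,\bigr],
\]
up to the same overall normalisation constant tracked in \cref{lemm.computation1}. First I would substitute the expression for $\widehat{B_1(\Omega)}(n)$ derived above, with $\hat\Omega_k=(2\pi)^2 a_k g_k^\omega$, and expand the modulus square; this leaves a sum over pairs $(k,m)$ and $(k',m')$ with $k+m=k'+m'=n$ of
\[
    c_{k,m}\,c_{k',m'}\;a_k a_m a_{k'} a_{m'}\;\mathbb{E}\bigl[g_k^\omega g_m^\omega\,\overline{g_{k'}^\omega g_{m'}^\omega}\bigr],
    \qquad c_{k,m}\coloneqq\tfrac12\,(m\cdot k^\perp)\Bigl(\tfrac{1}{|k|^2}-\tfrac{1}{|m|^2}\Bigr),
\]
and I note that $c_{k,m}=c_{m,k}$.

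The heart of the matter is the Gaussian moment. Writing $\overline{g_j^\omega}=g_{-j}^\omega$ as in \cref{assump.gn} and using Wick's formula for complex Gaussians, $\mathbb{E}[g_k^\omega g_m^\omega g_{-k'}^\omega g_{-m'}^\omega]$ vanishes unless the four indices split into two pairs with vanishing sum. Given the constraint $k+m=k'+m'=n$, this leaves only (a) $k+m=0$ and $k'+m'=0$, which forces $n=0$; (b) $(k',m')=(k,m)$; and (c) $(k',m')=(m,k)$. Case (a) contributes nothing since $\widehat{B_1(\Omega)}(0)=0$ (equivalently $m\cdot k^\perp=0$ when $k+m=0$), and in (b) and (c) the diagonal subcases $k=\pm m$ also carry a vanishing coefficient $m\cdot k^\perp=0$; so exactly the two genuine pairings survive, and by the symmetry $c_{k,m}=c_{m,k}$ they contribute equally. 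Collecting terms (and absorbing the fixed constant $\mathbb{E}|g_k^\omega|^2\mathbb{E}|g_m^\omega|^2$ into the normalisation) gives
\[
    \mathbb{E}\bigl[\|B_1(\Omega)\|_{H^s}^2\bigr]=\mathrm{const}\cdot\sum_{n\in\mathbb{Z}^2}\langle n\rangle^{2s}\sum_{k+m=n}2\,c_{k,m}^2\,|a_k|^2|a_m|^2.
\]
Relabelling $(k,m)$ as $(q,n)$, so that $k+m\mapsto q+n$, $(m\cdot k^\perp)^2\mapsto(q\cdot n^\perp)^2$ and $\tfrac{1}{|k|^2}-\tfrac{1}{|m|^2}\mapsto\tfrac{1}{|q|^2}-\tfrac{1}{|n|^2}$, and checking that the normalisation constant equals $\tfrac{1}{(2\pi)^4}$ exactly as in \cref{lemm.computation1}, recovers the claimed identity.

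Finally I would justify interchanging $\mathbb{E}$ with the summations; this is the only place the hypothesis $(a_n)\in h^{s+1}$ is used. Since $\widehat{B_1(\Omega)}$ loses one derivative relative to $\Omega$ we have $\|B_1(\Omega)\|_{H^s}\lesssim\|\Omega\|_{H^{s+1}}^2$ by \cref{lemm.bilinearForm}, and $\Omega\in H^{s+1}$ almost surely with Gaussian tails, so Fubini's theorem (or a direct Schur-type bound on the triple sum of absolute values) legitimises all the manipulations. I do not expect any serious obstacle here: the only point requiring genuine care is the enumeration in the second paragraph — making sure every anomalous index configuration is killed either by $\widehat{B_1(\Omega)}(0)=0$ or by the vanishing of $m\cdot k^\perp$, so that precisely the factor $2$ is produced with no stray diagonal contributions.
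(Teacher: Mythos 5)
Your proposal is correct and follows essentially the same route as the paper: Plancherel, expansion of the modulus square over $(k,m)$ and $(k',m')$ with $k+m=k'+m'=n$, Wick's formula leaving only the two pairings $(k',m')=(k,m)$ and $(k',m')=(m,k)$ (the third, $k+m=0$, and the diagonal cases being killed by the vanishing of the coefficient $c_{k,m}$), and the final relabelling $(k,m)\mapsto(q,n)$. Your added care with the anomalous pairings and the Fubini justification is a slight refinement of, not a departure from, the paper's argument.
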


\begin{proof} First we write 
\[
    \|B_1(\Omega)\|_{H^s}^2 = \frac{1}{(2\pi)^4}\sum_{n\in \mathbb{Z}^2}\langle n\rangle ^{2s} \sum_{\substack{k+m=n \\ k' + m' = n}} c_{k,m}c_{k',m'}a_ka_m\bar{a}_{k'}\bar{a}_{m'}g_k^{\omega}g_m^{\omega}\bar{g}_{k'}^{\omega}\bar{g}_{m'}^{\omega} \,,
\]
where $c_{k,m}=\frac{m\cdot k^{\perp}}{2} \left(\frac{1}{|k|^2} - \frac{1}{|m|^2}\right)$.

Taking the expectation in the above and using that $c_{k,m}=0$ if $|k|=|m|$, there are two possible parings of $\{k,m,k',m'\}$ to consider: $(k,m)=(k',m')$ and $(k,m)=(m',k')$ both leading to the same contribution, therefore
\begin{align*}
    (2\pi)^4\mathbb{E}[\|B_1(\Omega)\|_{H^s}^2] &= 2\sum_{n\in \mathbb{Z}^2}\langle n \rangle ^{2s} \sum_{k+m=n}\frac{(k\cdot m^{\perp})^2}{4}\left(\frac{1}{|k|^2}-\frac{1}{|m|^2}\right)^2|a_m|^2|a_k|^2 \\ 
    & = \sum_{n,q \in \mathbb{Z}^2} \langle n + q \rangle^{2s} \frac{(q\cdot n^{\perp})^2}{2} \left(\frac{1}{|q|^2}-\frac{1}{|n|^2}\right)^2|a_q|^2|a_n|^2\,,
\end{align*}
where in the last step we just re-labeled: $(n,k,m) \mapsto (n+q,q,n)$.
\end{proof}

We go back to the notation $\Omega_0^{\omega}$ in the following proof.

\begin{proof}[Proof of \cref{prop.orthogonality}]
Let $s>0$. Expanding the squared $H^s$ norm and taking the expectation we compute 
\begin{align*}
    \|\Omega_0^{\omega} -tB_1(\Omega_0^{\omega}) + t^2B_2(\Omega_0^{\omega})\|^2_{L^2_{\omega}H^s} & = \|\Omega_0^{\omega}\|^2_{L^2_{\omega}H^s} + \gamma t^2 + \delta t^4 \\
    & -2t \mathbb{E}\left[\langle \Omega_0^{\omega},B_1(\omega_0^{\omega})\rangle_{H^s}\right] \\
    & -2t^3 \mathbb{E}\left[\langle B_1(\Omega_0^{\omega}),B_2(\omega_0^{\omega})\rangle_{H^s}\right] 
\end{align*}
where $\delta = \|B_2(\Omega_0^{\omega})\|^2_{L^2_{\omega}H^s}$ and $\gamma = \mathbb{E}\left[\|B_1(\Omega_0^{\omega})\|_{H^s}^2 + 2\langle \Omega_0^{\omega}, B_2(\Omega_0^{\omega})\rangle_{H^s}\right]$    

Using \cref{lemm.computation1,lemm.computation2} we have 
\begin{align*}
    \gamma &= \sum_{n,q \in\mathbb{Z}^2} \frac{\left(|a_n||a_q| (q \cdot n^{\perp})\right)^2}{(2\pi)^4}\left( \langle n \rangle ^{2s} \left(\frac{1}{|n|^2} - \frac{1}{|q|^2}\right)\left(\frac{1}{|q|^2}-\frac{1}{|q+n|^2}\right) + \frac{\langle n + q \rangle^{2s}}{2} \left(\frac{1}{|q|^2}-\frac{1}{|n|^2}\right)^2\right) \\ 
    & = \sum_{n,q \in\mathbb{Z}^2} b_{n,q}\left(\langle n \rangle ^{2s} \left(\frac{1}{|n|^2} - \frac{1}{|q|^2}\right)\left(\frac{1}{|q|^2}-\frac{1}{|q+n|^2}\right) + \frac{\langle n + q \rangle^{2s}}{2} \left(\frac{1}{|q|^2}-\frac{1}{|n|^2}\right)^2\right), 
\end{align*}
where $b_{n,q} = \frac{1}{(2\pi)^4}\left(|a_n||a_q| (q \cdot n^{\perp})\right)^2\left(\frac{1}{|n|^2}-\frac{1}{|q|^2}\right)$. Observe now that $b_{q,n}=-b_{n,q}$ so that using the symmetry between $n$ and $q$, we finally find  
\begin{equation*}
    \gamma = \sum_{n,q \in \mathbb{Z}^2} b_{n,q}\left( \frac{\langle n \rangle ^{2s}}{2}\left(\frac{1}{|q|^2}-\frac{1}{|q+n|^2}\right) + \frac{\langle q \rangle ^{2s}}{2} \left(\frac{1}{|q+n|^2}-\frac{1}{|n|^2}\right) + \frac{\langle n + q \rangle^{2s}}{2} \left(\frac{1}{|n|^2}-\frac{1}{|q|^2}\right)\right), 
\end{equation*}
which is~\eqref{eq.gamma}, as claimed. 

It remains to observe that $\mathbb{E}\left[\langle \Omega_0^{\omega},B_1(\omega_0^{\omega})\rangle_{H^s}\right]=0$ and $\mathbb{E}\left[\langle B_1(\Omega_0^{\omega}),B_2(\omega_0^{\omega})\rangle_{H^s}\right] =0$. In order to do so, observe that computations used in the proof of \cref{lemm.computation1} imply
\[
    \EE[\langle \Omega_0^{\omega}, B_1(\Omega_0^{\omega})\rangle_{H^s}] = \sum_{\substack{n\in\mathbb{Z}^2 \\ k+m=n}} \langle n\rangle^{2s}\frac{m\cdot k^{\perp}}{|k|^2}a_ka_m\bar{a}_n \EE[g^{\omega}_kg^{\omega}_m\bar{g}^{\omega}_n] = 0,   
\]
since $\EE[g^{\omega}_kg^{\omega}_m\bar{g}^{\omega}_n]=0$ for all values of $k, m, n \in\mathbb{Z}^2$.

it follows that $\mathbb{E}[\langle \Omega_0^{\omega}, B_1(\Omega_0^{\omega})\rangle]=0$. For the same reason, there also holds that $\EE[\langle B_1(\Omega_0^{\omega}),B_2(\Omega_0^{\omega}) \rangle_{H^s}] =0$.
\end{proof}

\section{The perturbative argument}\label{sec.perturbative}

Let us recall that $w^{\omega}(t)$, defined by 
\[
    \Omega^{\omega}(t)=\Omega^{\omega}_0 - tB_1(\Omega^{\omega}_0) + t^2B_2(\Omega^{\omega}_0) + w^{\omega}(t)\,,
\]
is the remainder in the ansatz~\eqref{eq.ansatz}. In particular $w^{\omega}(0)=0$. First, by definition of $w^{\omega}(t)$, we have
\[
    \partial_t \Omega^{\omega}(t)= - B_1(\Omega^{\omega}_0) + 2t B_2(\Omega^{\omega}_0) + \partial_tw^{\omega}(t)\,.
\] 
Expanding $B(\Omega^{\omega}(t),\Omega^{\omega}(t))$, using that $\Omega^{\omega}(t)$ solves~\eqref{eq.eulerVorticity} we find that $w^{\omega}(t)$ (which will be denoted $w$, omitting the $\omega$ superscripts) satisfies the following evolution equation:
\begin{align}
    \partial_t w(t) &= -B(w(t),w(t)) \label{eq.quadW}\\
    & - 2B(\Omega_0,w(t)) + 2t B(B_1(\Omega_0),w(t)) \label{eq.linW}  \\
    & - t^2 \underbrace{\left(B(B_1(\Omega_0),B_1(\Omega_0))  + 2B(\Omega_0,B_2(\Omega_0)) \right)}_{B_3(\Omega_0)} - 2t^2 B(B_2(\Omega_0),w(t)) \label{eq.mainRemainder} \\
    & + 2t^3 \underbrace{B(B_1(\Omega_0),B_2(\Omega_0))}_{B'_3(\Omega_0)} - t^4\underbrace{B(B_2(\Omega_0),B_2(\Omega_0))}_{\tilde B_3(\Omega_0)} \label{eq.okRemainder}.
\end{align}
Let us make a few comments:
\begin{itemize}
    \item Our proof proceeds by performing energy estimates in $H^s$, targeting an estimate of the form $\|w(t)\|_{H^s} \leqslant C t^3e^{ct}$ on short time (the exponential accounts for the use of the Grönwall inequality). One major threat to obtain such estimates will come from the random constants appearing in the estimates. For example we need to avoid factors like $e^{\norm{\Omega_0}_{H^\sigma}^3}$ appearing in our final estimate (which would not be integrable with respect to $\mu$). In particular, we have to pay attention when estimating~\eqref{eq.linW}. We want an estimate of the form $\|w(t)\|_{H^s} \leqslant Ct^3$, we expect that~\eqref{eq.mainRemainder} will be part of the main term in the estimate, but that~\eqref{eq.okRemainder} will contribute negligibly.  
    \item We point out that the usual energy estimate on the term $B(w(t),w(t))$ would bring a potential double-exponential bound, which on short-time is bounded. In our case we must take advantage of $w(0)=0$, so that the nonlinear term $B(w(t),w(t))$ should be viewed as a small contribution. 
    \item Because the method we employ is to use the ansatz~\eqref{eq.ansatz} for $\Omega^{\omega}(t)$ we will need to estimate $B_3(\Omega_0), \tilde{B}_3(\Omega_0), B'_3(\Omega_0)$, and it is the reason why we require $\sigma >s+3$.
\end{itemize}

\begin{proof}[Proof of \cref{prop.estW}] In this proof we only work with $t \leqslant 1$, and we will later restrict to shorter intervals of time. 

We start by deriving a first \textit{a priori} bound for $w^{\omega}(t)$ by using the triangle inequality, \cref{lemm.bilinearForm} and \cref{prop.globalBound}, 
\begin{align}\label{eq.aprioriW}
    \|w^{\omega}(t)\|_{H^{s'}} &\leqslant \|\Omega^{\omega}(t)\|_{H^{s'}} + \|\Omega^{\omega}_0\|_{H^{s'}} + t\|B_1(\Omega^{\omega}_0)\|_{H^{s'}} + t^2\|B_2(\Omega^{\omega}_0)\|_{H^{s'}} \\
    & \leqslant C_{\omega}\sqrt{\log(2+t)} + C(1+t+t^2) \left(\|\Omega^{\omega}_0\|_{H^{s'}} + \|\Omega^{\omega}_0\|^2_{H^{s'+1}} + \|\Omega^{\omega}_0\|_{H^{s'+2}}^3\right) \notag  \\
    & \leqslant C(C_{\omega} + (1+\|\Omega_0^\omega\|_{H^{s'+2}}^3))=:D_{\omega} \notag 
\end{align}
for all $0 \leqslant t \leqslant 1$ and all $s'\leqslant \sigma - 2$. 

We will later use that the random constant $D_{\omega}$ lies in $L^p_{\omega}$ for all $1\leqslant p< \infty$. However, note that it lacks exponential moments, therefore directly plugging this estimate in~\eqref{eq.quadW} and performing usual energy estimates would yield a bound on $\|w(t)\|_{H^s}$ which would not lie in $L^2_{\omega}$.

Let us now move to the energy estimate. We compute $\frac{1}{2}\frac{\mathrm{d}}{\mathrm{d}t}\|w(t)\|^2_{H^s} = (\partial_t w(t),w(t))_{H^s}$ using~\eqref{eq.quadW} to decompose $\partial_t w(t)$. 

First, we use \cref{lemm.bilinearForm} to estimate~\eqref{eq.quadW} by
\begin{align*}
    \label{eq.quadraticBound}
    \left\vert(B(w(t),w(t)),w(t))_{H^s}\right\vert &\leqslant C(s,\varepsilon)\|w(t)\|_{H^s}^2\|w(t)\|_{H^{1+\varepsilon}},
\end{align*}
where $C(s,\varepsilon)$ is a global deterministic constant.  

For the linear terms in~\eqref{eq.linW} we rely on \cref{lemm.bilinearForm} again to bound
\[
    \left(B(\Omega_0,w(t)),w(t)\right)_{H^s} \leqslant C(s,\varepsilon)\|\Omega_0\|_{H^{s+1+\varepsilon}}\|w(t)\|_{H^{s}}^2\,,
\]
and
\[
    \left(B(B_1(\Omega_0),w(t)),w(t)\right)_{H^s} \leqslant C(s,\varepsilon)\|B_1(\Omega_0)\|_{H^{s+1+\varepsilon}}\|w(t)\|_{H^s}^2\,, 
\]
which together imply 
\begin{equation}
    \label{eq.linearBound}
    \left(-2B(\Omega_0,w(t)) +2t B(B_1(\Omega_0),w(t)), w(t)\right)_{H^s} \leqslant C(s,\varepsilon)(1+\|\Omega_0\|_{H^{s+2+\varepsilon}}^2) \|w(t)\|^2_{H^s}.
\end{equation}

To estimate~\eqref{eq.mainRemainder} we use the Cauchy-Schwarz inequality and \cref{lemm.bilinearForm} to obtain 
\begin{align*}
    \left(B_3(\Omega_0),w(t)\right)_{H^s} & \leqslant \|B_3(\Omega_0)\|_{H^{s}}\|w(t)\|_{H^s} \leqslant C(s,\varepsilon) \|\Omega_0\|_{H^{s+3}}^4 \|w(t)\|_{H^s}.
\end{align*} 
Another application of \cref{lemm.bilinearForm} yields 
\begin{align*}
       \left(B(B_2(\Omega_0),w(t)),w(t)\right)_{H^s} & \leqslant \|B_2(\Omega_0)\|_{H^{s+1+\varepsilon}}\|w(t)\|_{H^s} \\
       &\leqslant C(s,\varepsilon)\|\Omega_0\|^3_{H^{s+3+\varepsilon}}\|w(t)\|_{H^s}^2 \\
       &\leqslant D_{\omega}\|\Omega_0\|^3_{H^{s+3+\varepsilon}}\|w(t)\|_{H^s}\,
\end{align*}
where in the last inequality we have used~\eqref{eq.aprioriW}. 
Therefore  
\begin{equation}
    \label{eq.mainBound}
    \left(-t^2B_3(\Omega_0)-2t^2B(B_2(\Omega_0),w(t)),w(t)\right)_{H^s} \leqslant D_{\omega}t^2(1+\|\Omega_0\|^4_{H^{s+3+\varepsilon}})\|w(t)\|_{H^s}\,.
\end{equation}

Finally, also using \cref{lemm.bilinearForm} we can also bound the remainder terms~\eqref{eq.okRemainder} and obtain 
\begin{equation}
    \label{eq.otherBound}
    \left(2t^3B'_3(\Omega_0)-t^4\tilde{B}_3(\Omega_0),w(t)\right)_{H^s} \leqslant C(s,\varepsilon)t^3(1+\|\Omega_0\|^6_{H^{s+3}})\|w(t)\|_{H^s}.
\end{equation} 

Combining~\eqref{eq.linearBound}, \eqref{eq.mainBound} and \eqref{eq.otherBound} we finally obtain 
\begin{align}
    \frac{\mathrm{d}}{\mathrm{d}t} \|w(t)\|^2_{H^s} &\leqslant \|w(t)\|^2_{H^s}\|w(t)\|_{H^{1+\varepsilon}} + C(s,\varepsilon)(1+\|\Omega_0\|_{H^{s+2+\varepsilon}}^2) \|w(t)\|^2_{H^s} \\
    & + D_{\omega}t^2(1+\|\Omega_0\|^4_{H^{s+3 + \varepsilon}})\|w(t)\|_{H^s} + C(s,\varepsilon)t^3(1+\|\Omega_0\|^6_{H^{s+3}})\|w(t)\|_{H^s}.  
\end{align}
Using $t\leqslant 1$, dividing by $\|w(t)\|_{H^s}$ and collecting the constants we find 
\begin{equation}
    \label{eq.diffInequal}
    \frac{\mathrm{d}}{\mathrm{d}t} \|w(t)\|_{H^s} \leqslant \|w(t)\|_{H^s}\|w(t)\|_{H^{1+\varepsilon}} +  k_{\omega} \|w(t)\|_{H^s} + K_{\omega}t^2
\end{equation}
where $\mathbb{E}[e^{ck_{\omega}}]<\infty$ for $c$ small enough and $K_{\omega} \in L^p_{\omega}$ for all finite $p$. 

Let us now use~\eqref{eq.aprioriW} for $0 \leqslant t \leqslant 1$ in ~\eqref{eq.diffInequal} to bound 
\[
    \frac{\mathrm{d}}{\mathrm{d}t} \|w(t)\|_{H^s} \leqslant D'_{\omega}\,,
\]
where $D'_{\omega}\in L^p_{\omega}$ for all finite $p$. Note that we have used that $1+\varepsilon \leqslant \sigma -2$ in order to use~\eqref{eq.aprioriW}. Integrating this differential bound yields $\|w(t)\|_{H^s}  \leqslant D'_{\omega}t$ for $0\leqslant t \leqslant 1$. Using the latter inequality (also used with $s=1+\varepsilon$)  in~\eqref{eq.diffInequal}, we obtain the differential inequality
\[
    \frac{\mathrm{d}}{\mathrm{d}t} \|w(t)\|_{H^s} \leqslant k_{\omega} \|w(t)\|_{H^s} + K'_{\omega}t^2,
\]
where $K'_{\omega} \coloneqq (D'_{\omega})^2 + K_{\omega}$. Therefore the Grönwall inequality implies 
\[
    \|w(t)\|_{H^s} \leqslant \frac{1}{3}K'_{\omega}t^3 \exp(k_{\omega}t) \leqslant \frac{1}{3}K'_{\omega}t^3 \exp(\frac{c}{2}k_{\omega})
\]
if $0 \leqslant t \leqslant \frac{c}{2}$. Therefore $\|w(t)\|_{L^2_{\omega}H^s}\leqslant \frac{1}{3}\mathbb{E}[K_{\omega}^2\exp(ck_{\omega})]^{\frac{1}{2}}t^3$, and the claim is proven with $C=\frac{1}{3}\mathbb{E}[(K'_{\omega})^2\exp(ck_{\omega})]^{\frac{1}{2}}<\infty$.
\end{proof}

\section{Genericity of the condition \texorpdfstring{$\gamma >0$}{gamma}}\label{sec.genericity}

\subsection{First examples of non-invariant Gaussian measures}

We start with providing a very simple example of $(a_n)_{n\in\mathbb{Z}^2}$ for which $\gamma_{s,(a_n)}\neq 0$. 

\begin{lemma}\label{lem.example1} Let $a_n=1$ if $n\in\{(1,0),(-1,0),(0,2),(0,-2)\}$, and $a_n=0$ otherwise. If $s >1$ then $\gamma_{s,(a_n)} \neq 0$. Therefore the associated measure is not-invariant under the flow of~\eqref{eq.eulerVorticity} (nor quasi-invariant under \cref{assump.quasinvar}.)
\end{lemma}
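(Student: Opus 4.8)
The plan is a direct evaluation of the finite sum \eqref{eq.gamma}. Write $S = \{(1,0),(-1,0),(0,2),(0,-2)\}$ for the support of $(a_n)$ and let $T_{n,q}$ denote the summand in \eqref{eq.gamma}, so that $\gamma_{s,(a_n)} = (2\pi)^{-4}\sum_{n,q\in S} T_{n,q}$. First I would record the vanishing mechanisms: $T_{n,q}=0$ whenever $|n|=|q|$ because of the factor $\tfrac1{|n|^2}-\tfrac1{|q|^2}$ (this in particular kills the pairs with $q=-n$, where the formal division by $|n+q|^2$ in \eqref{eq.beta} would occur, since there $(q\cdot n^\perp)^2$ also vanishes). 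The two norm classes present in $S$ are $|n|^2=1$ (for $\pm(1,0)$) and $|n|^2=4$ (for $\pm(0,2)$), so the only surviving pairs are the eight with one index in $\{\pm(1,0)\}$ and the other in $\{\pm(0,2)\}$; on all of them $n+q\neq 0$, so \eqref{eq.beta} is well defined there.

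Next I would use that $T_{n,q}$ is symmetric under $(n,q)\mapsto(q,n)$: the factor $(q\cdot n^\perp)^2=(n\cdot q^\perp)^2$ is symmetric, the factor $\tfrac1{|n|^2}-\tfrac1{|q|^2}$ is antisymmetric, and a term-by-term inspection of \eqref{eq.beta} shows $\beta_{q,n}=-\beta_{n,q}$, so the two sign changes cancel. Hence $\gamma_{s,(a_n)} = 2(2\pi)^{-4}\sum_{n\in\{\pm(1,0)\},\,q\in\{\pm(0,2)\}} T_{n,q}$. On each of these four pairs, $n\perp q$ gives $(q\cdot n^\perp)^2=4$, $\tfrac1{|n|^2}-\tfrac1{|q|^2}=\tfrac34$, and $|n+q|^2=5$, so $\langle n\rangle^{2s}=2^s$, $\langle q\rangle^{2s}=5^s$, $\langle n+q\rangle^{2s}=6^s$, and therefore
\[
\beta_{n,q}=2^s\Big(\tfrac14-\tfrac15\Big)+5^s\Big(\tfrac15-1\Big)+6^s\Big(1-\tfrac14\Big)=\frac{2^s}{20}-\frac{4\cdot 5^s}{5}+\frac{3\cdot 6^s}{4}=:\beta(s),
\]
independently of the pair. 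Collecting factors yields $T_{n,q}=\tfrac32\beta(s)$ on each pair, hence $\gamma_{s,(a_n)}=\dfrac{12\,\beta(s)}{(2\pi)^4}$.

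It then remains to check that $\beta(s)\neq0$ for $s>1$; in fact $\beta(s)>0$ there. Dropping the positive term $2^s/20$ and factoring out $5^s$,
\[
\beta(s)>\frac34\,6^s-\frac45\,5^s=5^s\Big(\frac34\Big(\tfrac65\Big)^s-\frac45\Big)\geq 5^s\Big(\frac34\cdot\frac65-\frac45\Big)=\frac{5^s}{10}>0,
\]
using $(6/5)^s\geq 6/5$ for $s\geq1$. Thus $\gamma_{s,(a_n)}>0$ for every $s\geq1$. Finally, since $(a_n)$ has finite support it lies in $h^\sigma$ for every $\sigma$, so $\mu_{(a_n)}\in\mathcal{M}^\sigma$ for all $\sigma>3$; given $s>1$, choosing $\sigma>s+3$ places $s$ in $(0,\sigma-3)$ with $\gamma_{s,(a_n)}\neq0$, and \cref{thm.main} gives that $\mu_{(a_n)}$ is not quasi-invariant in the sense of \cref{assump.quasinvar}, hence not invariant. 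The only content beyond bookkeeping is the sign check $\beta(s)>0$ (and the $\beta_{q,n}=-\beta_{n,q}$ antisymmetry that powers the reduction); I do not expect any genuine obstacle, as everything reduces to one explicit evaluation of $\beta_{n,q}$ on the surviving pairs.
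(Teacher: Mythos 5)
Your proposal is correct and is essentially the paper's proof: both evaluate the finite sum \eqref{eq.gamma} explicitly on the eight surviving cross pairs and reduce the sign question to the positivity of $15\cdot 6^{s}-16\cdot 5^{s}+2^{s}$, established via $(6/5)^{s}\geqslant 6/5>16/15$ for $s\geqslant 1$. Your overall constant ($12\beta(s)/(2\pi)^4$, i.e.\ $\tfrac35(15\cdot6^s-16\cdot5^s+2^s)$ up to the $(2\pi)^{-4}$) differs from the paper's stated $\tfrac{3}{20}(15\cdot6^s-16\cdot5^s+2^s)$ by a positive factor, which is immaterial for the conclusion and appears to reflect a slip in the paper's displayed prefactor rather than in your bookkeeping.
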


\begin{proof} We can compute explicitly $\gamma_{s,(a_n)}$. Using the notation of~\eqref{eq.gamma} we find: 
\[
    \gamma = 3\left(\frac{6^{s}}{2} + \frac{3 \cdot 2^{s}}{10} - \frac{4 \cdot 3^{s}}{5}\right)=\frac{3}{20}\left(15\cdot 6^{s} - 16\cdot 5^{s} + 2^{s}\right)\,,
\]
and we need to check that this does not vanish. Note that $s > 1$ we observe that $\left(\frac{6}{5}\right)^{s}>\frac{6}{5} > \frac{16}{15}$ which implies $15\cdot 6^{s} - 16\cdot 5^{s}>0$ so that $\gamma >0$. 
\end{proof}

As a corollary for all $\sigma >3$, one can construct a sequence $(a_n)_{n\in\mathbb{Z}^2}$ such that the associated measure $\mu_{(a_n)}$ is `exactly' supported on $H^{\sigma}$ and $\gamma_{s,(a_n)} \neq 0$ for all $s\in (0,\sigma]$.  

\begin{corollary} Let $\sigma > 3$. There exists $\mu \in \mathcal{M}^{\sigma}$ which is not invariant under the flow of~\eqref{eq.eulerVorticity} and which satisfies $\mu(H^{\sigma}) =1$, and $\mu(H^{\sigma +\varepsilon})=0$ for all $\varepsilon >0$.    
\end{corollary}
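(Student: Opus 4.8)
The plan is to glue the finitely supported example of \cref{lem.example1} to a high-frequency tail calibrated to sit exactly at the borderline decay rate for $h^\sigma$. First I would fix $s_0 := \tfrac{\sigma-3}{2} \in (0,\sigma-3)$ and let $(b_n)_{n\in\mathbb{Z}^2}$ be the sequence of \cref{lem.example1}. The explicit formula $\gamma_{s,(b_n)} = \tfrac{3}{20}\bigl(15\cdot 6^s - 16\cdot 5^s + 2^s\bigr)$ obtained in its proof in fact gives $\gamma_{s,(b_n)} > 0$ for \emph{all} $s>0$, not merely $s>1$: setting $f(s) = 15\cdot 6^s - 16\cdot 5^s + 2^s$ one has $f(0)=0$, $f'(0) = 15\log 6 - 16\log 5 + \log 2 > 0$ and $f''>0$ on $[0,\infty)$, hence $f>0$ on $(0,\infty)$; in particular $\gamma_{s_0,(b_n)} > 0$.

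Next I would choose the tail $(c_n)_{n\in\mathbb{Z}^2}$ with $c_n = 0$ on the support of $(b_n)$, $c_0=0$, $c_{-n}=c_n$, and $c_n = \langle n\rangle^{-(\sigma+1)}\bigl(\log(3+|n|^2)\bigr)^{-1}$ for $|n|$ large. A comparison of $\sum_n \langle n\rangle^{2\sigma}|c_n|^2 = \sum_n \langle n\rangle^{-2}\bigl(\log(3+|n|^2)\bigr)^{-2}$ with the radial integral $\int_{\mathbb{R}^2}\langle x\rangle^{-2}(\log(3+|x|^2))^{-2}\,\dee x < \infty$ shows $(c_n) \in h^\sigma$, whereas $\sum_n \langle n\rangle^{2(\sigma+\varepsilon)}|c_n|^2 = \sum_n \langle n\rangle^{2\varepsilon - 2}\bigl(\log(3+|n|^2)\bigr)^{-2}$ diverges for every $\varepsilon>0$ (a logarithm cannot compensate a positive power of $\langle n\rangle$), so $\|(c_n)\|_{h^{\sigma+\varepsilon}} = +\infty$ for all $\varepsilon>0$.

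Then, putting $a^{(\lambda)}_n := b_n + \lambda c_n$ for $\lambda>0$, one has $(a^{(\lambda)}_n)\in h^\sigma$ and $\|(a^{(\lambda)}_n)\|_{h^{\sigma+\varepsilon}} \geqslant \lambda\|(c_n)\|_{h^{\sigma+\varepsilon}} - \|(b_n)\|_{h^{\sigma+\varepsilon}} = +\infty$ for all $\varepsilon>0$. Since the supports of $b$ and $c$ are disjoint, $|a^{(\lambda)}_n|^2 = |b_n|^2 + \lambda^2|c_n|^2$, and substituting into \eqref{eq.gamma} (all series converging absolutely because $b,c,a^{(\lambda)}\in h^\sigma \subset h^{s_0+1}$, by \cref{lemm.computation1,lemm.computation2}) yields an identity
\[
\gamma_{s_0,(a^{(\lambda)}_n)} = \gamma_{s_0,(b_n)} + \lambda^2 G + \lambda^4 \gamma_{s_0,(c_n)}
\]
with $G$ a finite constant; alternatively one simply invokes that $A^\sigma$ is open in $h^\sigma$ (\cref{thm.genericity}) and $b\in A^\sigma$. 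In either case $\gamma_{s_0,(a^{(\lambda)}_n)} \to \gamma_{s_0,(b_n)} > 0$ as $\lambda\to 0^+$, so I fix $\lambda_0>0$ with $\gamma_{s_0,(a^{(\lambda_0)}_n)}\neq 0$ and set $\mu := \mu_{(a^{(\lambda_0)}_n)}\in\mathcal{M}^\sigma$. By \cref{thm.main} with $s=s_0\in(0,\sigma-3)$, $\mu$ is neither quasi-invariant (in the sense of \cref{assump.quasinvar}) nor invariant under the flow of \eqref{eq.eulerVorticity}, and $\mathbb{E}_\mu[\|\Omega\|_{H^\sigma}^2] = \|(a^{(\lambda_0)}_n)\|_{h^\sigma}^2 < \infty$ by \eqref{eq.HsigSupport}, so $\mu(H^\sigma)=1$.

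It remains to show $\mu(H^{\sigma+\varepsilon})=0$ for all $\varepsilon>0$; by the inclusion $H^{\sigma+\varepsilon}\subset H^{\sigma+1/2}$ for $\varepsilon\geqslant 1/2$ it suffices to treat $\varepsilon\in(0,1)$. For such $\varepsilon$, $\|\Omega_0^\omega\|_{H^{\sigma+\varepsilon}}^2 = \sum_n \langle n\rangle^{2(\sigma+\varepsilon)}|a^{(\lambda_0)}_n|^2|g_n^\omega|^2$ is a series of independent nonnegative random variables whose means sum to $+\infty$ (the divergence checked above) and whose deterministic coefficients $\langle n\rangle^{2(\sigma+\varepsilon)}|a^{(\lambda_0)}_n|^2 = \lambda_0^2\langle n\rangle^{2\varepsilon-2}(\log(3+|n|^2))^{-2}$ tend to $0$, so that $\mathbb{E}[\min(X_n,1)]$ is comparable to $\mathbb{E}[X_n]$ for $n$ large; by Kolmogorov's three-series theorem the series diverges almost surely, hence $\mu(H^{\sigma+\varepsilon})=0$. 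I expect the only genuinely delicate points to be this last almost-sure divergence — upgrading $\|(a^{(\lambda_0)}_n)\|_{h^{\sigma+\varepsilon}}=\infty$ to $\mu(H^{\sigma+\varepsilon})=0$ rather than merely an infinite expectation — and the calibration of $(c_n)$ to the exact critical exponent (the logarithm); the $\lambda$-perturbation and the invocation of \cref{thm.main} are routine.
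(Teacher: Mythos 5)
Your proof is correct and follows essentially the same route as the paper: perturb the compactly supported example of \cref{lem.example1} by a small multiple of a tail decaying at the critical rate $\langle n\rangle^{-(\sigma+1)}(\log(3+\langle n\rangle^2))^{-1}$, keep $\gamma\neq 0$ by continuity (or, as you note, by the explicit expansion in $\lambda$), and invoke \cref{thm.main}. The only differences are that you supply two details the paper leaves implicit — the verification that $\gamma_{s,(b_n)}>0$ for \emph{all} $s>0$ (needed when $\sigma-3\leqslant 1$, since \cref{lem.example1} as stated only covers $s>1$) and a self-contained three-series argument for $\mu(H^{\sigma+\varepsilon})=0$, where the paper instead cites the appendix of \cite{burqTzvetkov} — both of which are welcome additions rather than deviations.
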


\begin{proof} We first remark that if $(a_n)_{n\in\mathbb{Z}^2} \in h_{\operatorname{rad}}^{\sigma} \setminus \bigcup_{\varepsilon >0}h_{\operatorname{rad}}^{\sigma +\varepsilon}$ with non vanishing $a_n$, then the associate measure $\mu_{(a_n)}$ satisfies $\mu(H^{\sigma}) =1$, and $\mu(H^{\sigma +\varepsilon})=0$. That $\mu(H^{\sigma}) =1$ holds follows from~\eqref{eq.HsigSupport}. The fact that for any $\varepsilon >0$ there holds $\mu(H^{\sigma +\varepsilon})=0$ is a more difficult but a standard fact, we refer to the Appendix of~\cite{burqTzvetkov} for a detailed proof. 

Let $b_n=\frac{1}{\langle n \rangle ^{\sigma +1}\log(\langle n\rangle)}$, which belongs to $h_{\operatorname{rad}}^{\sigma} \setminus \bigcup_{\varepsilon >0}h_{\operatorname{rad}}^{\sigma +\varepsilon}$. Let also $(a_n)_{n\in\mathbb{Z}^2}$ be the sequence from \cref{lem.example1} and let us consider $c_n=a_n+\varepsilon b_n$. Then by expanding $\gamma_{s,(c_n)}$ we can check that $\gamma_{s,(c_n)} = \gamma_{s,(a_n)} + \mathcal{O}(\varepsilon)$, and since $\gamma{s,(a_n)} >0$, we can take $\varepsilon$ small enough so that $\gamma_{s,(c_n)}>0$.
\end{proof}

\subsection{Proof of \cref{thm.genericity}}

The proof essentially follows from the following continuity lemma. 

\begin{lemma}\label{lemm.continuity} Let $s \geqslant 0$ and $\sigma \geqslant s+1$. The map $\gamma : (a_n)_{n\in\mathbb{Z}^2} \longmapsto \gamma_{s,(a_n)}$, where $\gamma_{s,(a_n)}$ is defined by~\eqref{eq.gamma} is continuous as an operator $h^{\sigma}(\mathbb{Z}^2) \to \mathbb{R}$. 
\end{lemma}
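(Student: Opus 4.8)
The plan is to show that $\gamma$ is continuous by writing it as a sum of an absolutely convergent series whose terms depend continuously (in fact polynomially) on $(a_n)$, with a tail bound that is uniform on bounded subsets of $h^\sigma$. Concretely, recall from \eqref{eq.gamma}-\eqref{eq.beta} that
\[
    \gamma_{s,(a_n)} = \frac{1}{(2\pi)^4}\sum_{n,q\in\mathbb{Z}^2} |a_n|^2|a_q|^2\, c_{n,q},
\]
where $c_{n,q} = \frac{(q\cdot n^\perp)^2}{2}\left(\frac{1}{|n|^2}-\frac{1}{|q|^2}\right)\beta_{n,q}$ is a fixed (deterministic) kernel. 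The first step is to establish a pointwise bound $|c_{n,q}| \lesssim_s \langle n\rangle^{2s} + \langle q\rangle^{2s} + \langle n+q\rangle^{2s} \lesssim_s \langle n\rangle^{2s}\langle q\rangle^{2s}$, say, by using $|q\cdot n^\perp| \leqslant |n||q|$, the trivial bounds $\left|\frac{1}{|n|^2}-\frac{1}{|q|^2}\right| \leqslant 2$ (valid since $|n|,|q| \geqslant 1$), and analogous bounds on the three parenthesised differences inside $\beta_{n,q}$; the factor $(q\cdot n^\perp)^2$ together with those differences is bounded by $C|n|^2|q|^2 \cdot \langle n+q\rangle^{2s}$ (and the symmetric terms), which is $\lesssim_s \langle n\rangle^{2+2s}\langle q\rangle^{2+2s}$ after using $\langle n+q\rangle \leqslant \langle n\rangle\langle q\rangle$. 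Hence
\[
    |c_{n,q}| \leqslant C(s)\, \langle n\rangle^{2s+2}\langle q\rangle^{2s+2}.
\]

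The second step is the absolute convergence and uniform-tail estimate. Since $\sigma \geqslant s+1$, we have $2s+2 \leqslant 2\sigma$, so
\[
    \sum_{n,q\in\mathbb{Z}^2} |a_n|^2|a_q|^2\, |c_{n,q}| \leqslant C(s)\left(\sum_{n} \langle n\rangle^{2s+2}|a_n|^2\right)^2 \leqslant C(s)\,\|(a_n)\|_{h^\sigma}^4 < \infty,
\]
which shows $\gamma_{s,(a_n)}$ is well-defined, and more importantly that for any $R>0$, on the ball $\{\|(a_n)\|_{h^\sigma}\leqslant R\}$ the tail $\sum_{\max(|n|,|q|)>N} |a_n|^2|a_q|^2|c_{n,q}|$ is bounded by $C(s)R^2 \big(\sum_{|n|>N}\langle n\rangle^{2s+2}|a_n|^2\big)$, which tends to $0$ uniformly\,---\,actually one needs slight care here since $|a_n|$ need not be uniformly small on the ball; instead bound the tail by $C(s)\big(\sum_{n}\langle n\rangle^{2\sigma}|a_n|^2\big)\big(\sum_{|q|>N}\langle q\rangle^{2s+2}|a_q|^2\big)$ plus the symmetric term, and use that each convergent series $\sum_n \langle n\rangle^{2\sigma}|a_n|^2$ has tails controlled by the $h^\sigma$ norm. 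To make the tail genuinely uniform on a ball one can instead use the standard argument: $\gamma$ restricted to finitely many modes is a polynomial, hence continuous, and the full $\gamma$ is a locally uniform limit of these, because $\big|\gamma_{s,(a_n)} - \gamma^{(N)}_{s,(a_n)}\big| \leqslant C(s)\,\|(a_n)\|_{h^\sigma}^2 \,\varepsilon_N\big((a_n)\big)$ with $\varepsilon_N\big((a_n)\big) = \sum_{|n|>N}\langle n\rangle^{2\sigma}|a_n|^2 \to 0$; combined with the elementary fact that a locally bounded, pointwise limit of continuous functions that converges uniformly on a neighborhood of each point is continuous, this gives the claim.

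The third step is just continuity of each truncation: $\gamma^{(N)}_{s,(a_n)} = \frac{1}{(2\pi)^4}\sum_{|n|,|q|\leqslant N} |a_n|^2|a_q|^2 c_{n,q}$ is a polynomial in finitely many coordinates of $(a_n)$, hence continuous $h^\sigma \to \mathbb{R}$ (the coordinate projections $(a_n)\mapsto a_m$ are continuous since $|a_m| \leqslant \langle m\rangle^{-\sigma}\|(a_n)\|_{h^\sigma}$). Putting the three steps together\,---\,polynomial truncations, locally uniform convergence, limit is continuous\,---\,yields continuity of $\gamma$ on $h^\sigma$.

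The main obstacle, and the only place requiring genuine care, is the kernel bound in step one: one must verify that the combination of $(q\cdot n^\perp)^2$ with the curvature-type differences $\frac{1}{|n|^2}-\frac{1}{|q|^2}$ and the bracketed expressions in $\beta_{n,q}$ does not grow faster than $\langle n\rangle^{2\sigma}\langle q\rangle^{2\sigma}$; the crude bounds $|q\cdot n^\perp|\leqslant |n||q|$ and $\left|\frac{1}{|a|^2}-\frac{1}{|b|^2}\right|\leqslant 1$ for $|a|,|b|\geqslant 1$ suffice, together with $\langle n+q\rangle \leqslant \langle n\rangle\langle q\rangle$, and one checks $2s+2 \leqslant 2\sigma$ exactly uses the hypothesis $\sigma\geqslant s+1$. (No cancellation among the three terms of $\beta_{n,q}$ is needed for \emph{continuity}; cancellation only matters when one later wants to \emph{compute} or \emph{sign} $\gamma$.)
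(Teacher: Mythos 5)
Your proof is correct in substance but follows a genuinely different route from the paper's. You use the same crude kernel bound $|c_{n,q}|\leqslant C(s)\langle n\rangle^{2s+2}\langle q\rangle^{2s+2}$ (this is exactly the paper's $\alpha_{n,q}=\mathcal{O}(\langle n\rangle^{2(s+1)}\langle q\rangle^{2(s+1)})$, and it is indeed the only place where $\sigma\geqslant s+1$ enters), but you then argue softly: truncate to finitely many modes, observe that each truncation is a polynomial in finitely many continuous coordinate functionals, and pass to the limit. The paper instead estimates the difference $|\gamma_{a^k}-\gamma_a|$ directly, factoring $|a_n^k|^2|a_q^k|^2-|a_n|^2|a_q|^2=(|a_n^k||a_q^k|-|a_n||a_q|)(|a_n^k||a_q^k|+|a_n||a_q|)$ and applying Cauchy--Schwarz, which yields a quantitative local Lipschitz-type bound $|\gamma_{a^k}-\gamma_a|\lesssim (\|a^k\|_{h^{s+1}}^2+\|a\|_{h^{s+1}}^2)\,\|a^k-a\|_{h^{s+1}}\,(\|a\|_{h^{s+1}}+\|a^k\|_{h^{s+1}})$ --- a modulus of continuity rather than bare continuity, and in fact continuity already on the larger space $h^{s+1}\supseteq h^{\sigma}$. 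One spot in your argument deserves tightening: in the borderline case $\sigma=s+1$ the tail $\sum_{|q|>N}\langle q\rangle^{2s+2}|a_q|^2$ is \emph{not} uniformly small on a ball of $h^\sigma$, nor even on a fixed small neighborhood of a given point, so the convergence $\gamma^{(N)}\to\gamma$ is not literally ``uniform on a neighborhood of each point'' as you assert. The continuity conclusion still follows by the standard three-epsilon argument once you record $\sum_{|q|>N}\langle q\rangle^{2\sigma}|b_q|^2\leqslant 2\sum_{|q|>N}\langle q\rangle^{2\sigma}|a_q|^2+2\|b-a\|_{h^\sigma}^2$ and choose $\delta$ before $N$, but as written the appeal to locally uniform convergence is slightly too strong; the paper's direct difference estimate sidesteps this issue entirely.
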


\begin{proof} Let us prove the continuity using the sequential characterization: we take $a^k=(a^k_n)_{\in\mathbb{Z}^2}$ converging to $a=(a_n)_{\in\mathbb{Z}^2}$ in $h^{\sigma}(\mathbb{Z}^2)$. Then let us write 
\[
    \gamma_{a^k} = \sum_{n,q \in \mathbb{Z}^2} |a^k_n|^2|a^k_q|^2 \alpha_{n,q}  
\] 
where $\alpha_{n,q}=(q\cdot n^{\perp})^2\left(\frac{1}{|n|^2} - \frac{1}{|q|^2}\right)\beta_{n,q} = \mathcal{O}(\langle n\rangle^{2(s+1)} \langle q\rangle^{2(s+1)})$, where $\beta_{n,q}$ is defined in~\eqref{eq.beta}. This crude bound will be enough for our purpose. Using the identity
\[
    |a_n^k|^2|a_q^k|^2-|a_n|^2|a_q|^2 = (|a_n^k||a_q^k| - |a_n||a_q|)(|a_n^k||a_q^k| + |a_n||a_q|)\,,    
\]
and using the Cauchy-Schwarz inequality we infer 
\begin{align*}
    |\gamma_{a^k}-\gamma_a|^2 &\leqslant \sum_{n,q \in \mathbb{Z}^2} (|a_n^k||a_q^k| - |a_n||a_q|)^2\langle n\rangle^{2(s+1)} \langle q\rangle^{2(s+1)} \\ 
    & \times \sum_{n,q \in \mathbb{Z}^2} (|a_n^k||a_q^k| + |a_n||a_q|)^2 \langle n\rangle^{2(s+1)} \langle q\rangle^{2(s+1)}\,.
\end{align*}
First, we bound 
\[
    \sum_{n,q \in \mathbb{Z}^2} (|a_n^k||a_q^k| + |a_n||a_q|)^2 \langle n\rangle^{2(s+1)}\langle q\rangle^{2(s+1)} \leqslant \|a^k\|_{h^{s+1}}^4 + \|a\|_{h^{s+1}}^4 
\]
which is bounded (in $k$) because $s+1 \leqslant \sigma$. For the other term we write
\[
    (|a_n^k||a_q^k| - |a_n||a_q|)^2 \lesssim |a_q|^2|a_n^k-a_n|^2 + |a_n^k|^2|a_q^k-a_q|^2\,,
\]
so that we obtain 
\begin{align*}
    \sum_{n,q \in \mathbb{Z}^2} (|a_n^k||a_q^k| - |a_n||a_q|)^2\langle n\rangle^{2(s+1)}\langle q\rangle^{2(s+1)} &\lesssim \|a\|_{h^{s+1}}^2\|a^k-a\|_{h^{s+1}}^2 + \|a_k\|_{h^{s+1}}^2\|a^k-a\|_{h^{s+1}}^2 \\
    & =\mathcal{O}(\|a^k-a\|_{h^{\sigma}}^2)  
\end{align*}
which goes to zero because $s+1\leqslant \sigma$, therefore proving the continuity.
\end{proof}

The continuity in the first part of \cref{thm.genericity} now follows from \cref{lemm.continuity} and the fact that $A^{\sigma}=\gamma^{-1}(\mathbb{R}\setminus\{0\})$, therefore open as the reciprocal image of an open set by a continuous function. Also, \cref{lemm.continuity} holds with $h^{\sigma}_{\operatorname{rad}}$ in place of $h^{\sigma}$. 

Finally let us explain why $A^{\sigma}$ has empty interior in $h^{\sigma}$ (from which the corresponding result on $\mathcal{A}^{\sigma}$ will follow). Let $(a_n)_{n\in\mathbb{Z}^2}$ such that $\gamma_{(a_n)}=\gamma_{s,(a_n)}= 0$, and let $(b_n)_{n\in\mathbb{Z}^2}$ radially symmetric such that $\gamma_{(b_n)} = \gamma_{s,(b_n)} \neq 0$, such as constructed in \cref{lem.example1}. One can therefore consider $c_n=a_n+\varepsilon b_n$ for $\varepsilon \ll 1$. There holds $\|c-a\|_{h^{\sigma}} = \mathcal{O}(\varepsilon)$, and expanding $\gamma_{(c_n)}$ in terms of $\varepsilon_n$ we find 
\[
    \gamma_{(c_n)} = \gamma_{(a_n)} +\kappa_1 \varepsilon + \kappa_2 \varepsilon^2 + \kappa_3 \varepsilon^3 + \varepsilon^4 \gamma_{(b_n)}
\]
for some real constants $\kappa_1, \kappa_2$ and $\kappa_3$. By assumption, there holds $\gamma_{(a_n)}=0$. If $\kappa_1 \neq 0$ then one just needs to take $\varepsilon$ small enough so that $|\kappa_2 \varepsilon^2 + \kappa_3 \varepsilon^3 + \varepsilon^4 \gamma_{(b_n)}| \leqslant \frac{\kappa_1\varepsilon}{2}$ and therefore $\gamma_{(c_n)} \neq 0$. If $\kappa_1=0$ we can do the same analysis on $\kappa_2$ and so on, so that the only remaining case is $\kappa_1=\kappa_2=\kappa_3=0$ in which case we use that $\gamma_{(b_n)} \neq 0$.

Continuity in \cref{thm.genericity} Part~(ii) directly follows from~\eqref{eq.KantorovichRubinstein}:  
let $a^k=(a^k_n)_{n\in\mathbb{Z}^2}$ which converges to $a=(a_n)_{n\in\mathbb{Z}^2}$ in $h^{\sigma}$ (resp.\ $h^{\sigma}_{\operatorname{rad}}$) and let $\mu_k\coloneqq\mu_{a^k}$, $\mu\coloneqq\mu_a$. We can compute for any $1$-Lipschitz function $\varphi$, we compute 
\begin{align*}
    \left\vert\int_{H^{\sigma}} \varphi(u)\mathrm{d}(\mu-\mu_k)(u) \right\vert & = \left\vert\EE\left[\varphi\left(\sum_{n\in\mathbb{Z}^2} a_n^kg_n^{\omega}e^{in\cdot x}\right) -\varphi\left(\sum_{n\in\mathbb{Z}^2} a_ng_n^{\omega}e^{in\cdot x}\right)\right]\right\vert\\
    & \leqslant \EE\left[\left\| \sum_{n\in\mathbb{Z}^2} a_n^kg_n^{\omega}e^{in\cdot x} - \sum_{n\in\mathbb{Z}^2} a_ng_n^{\omega}e^{in\cdot x}\right\|_{H^{\sigma}(\mathbb{T}^2)}\right] \\
    & \leqslant  \left\| \sum_{n\in\mathbb{Z}^2} a_n^kg_n^{\omega}e^{in\cdot x} - \sum_{n\in\mathbb{Z}^2} a_ng_n^{\omega}e^{in\cdot x}\right\|_{L^{2}_{\omega}H^{\sigma}}\,, 
\end{align*}
where in the first line of the above computation we have used the push-forward definition of the measures $\mu_k$ and $\mu$. Taking the supremum over such functions $\varphi$ yields 
\[
    W_1(\mu,\mu_k) \leqslant \|a^k-a\|_{H^{\sigma}}
\] 
which goes to zero as $k\to \infty$.

\subsection{Proof of \cref{thm.probaGenericity}}

Our main tool is a generalization of the Kakutani criterion. Kakutani~\cite{kakutani} characterized the equivalence between tensor products of equivalent measures. For Gaussian measures in Hilbert space, we use the following generalization due to Feldman and H\'ajek: 

\begin{theorem}[Feldman-H\'ajek \cite{bogachev}*{Theorem 2.7.2}] Let $\mu$ and $\nu$ be Gaussian random variables on a Hilbert space $X$ with means $m_{\mu}, m_{\nu}$ and co-variance operators $C_{\mu}, C_{\nu}$. Then $\mu$ and $\nu$ are either equivalent or mutually singular, and equivalence holds if and only if the three following conditions are met: 
\begin{enumerate}
    \item $\mu$ and $\nu$ have the same Cameron-Martin space $H=C_{\mu}^{\frac{1}{2}}(X)=C_{\nu}^{\frac{1}{2}}(X)$.
    \item $m_{\mu}-m_{\nu} \in H$.
    \item $(C_{\mu}^{-\frac{1}{2}}C_{\nu}^{\frac{1}{2}})(C_{\mu}^{-\frac{1}{2}}C_{\nu}^{\frac{1}{2}})^*- I$ is a Hilbert-Schmidt operator. 
\end{enumerate}
\end{theorem}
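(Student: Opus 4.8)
The plan is to establish both the dichotomy and the three equivalence conditions by reducing, through a chain of standard reductions, to a countable product of one--dimensional Gaussian measures, where the classical Kakutani theorem~\cite{kakutani} applies. First I would separate the contributions of the mean and the covariance. By the Cameron--Martin theorem, for a fixed centered covariance the translate $N(m_\mu-m_\nu,C)$ is equivalent to $N(0,C)$ exactly when $m_\mu-m_\nu$ lies in the Cameron--Martin space $C^{1/2}(X)$, and is mutually singular otherwise; this isolates condition $(2)$ and shows it is necessary and, together with the covariance conditions, sufficient. Hence it suffices to treat the centered case $m_\mu=m_\nu=0$ and compare $N(0,C_\mu)$ with $N(0,C_\nu)$.

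Next I would reduce the centered problem to a product measure. The requirement $\nu\ll\mu$ forces $C_\nu^{1/2}(X)\subseteq C_\mu^{1/2}(X)$, and symmetrically $\mu\ll\nu$ forces the reverse inclusion; imposing both yields condition $(1)$, the equality of Cameron--Martin spaces. Under condition $(1)$ the operator $R:=C_\mu^{-1/2}C_\nu^{1/2}$, a priori only densely defined, extends to a bounded operator on $X$ with bounded inverse (this is where the closed graph theorem enters, using that the two Cameron--Martin norms are equivalent). Setting $T:=RR^*$, a bounded self-adjoint strictly positive operator bounded away from $0$, I would diagonalize $T$ in an orthonormal eigenbasis $\set{e_n}$ with eigenvalues $\lambda_n$ and pass to the coordinates in which $\mu$ is standard. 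In these coordinates both measures become countable products: under $\mu$ the coordinates are i.i.d.\ $N(0,1)$, while under $\nu$ they are independent $N(0,\lambda_n)$ with the $\lambda_n$ bounded above and below by condition $(1)$.

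With the product structure in place I would invoke Kakutani's theorem, which asserts that two such product measures are either equivalent or mutually singular---this gives the dichotomy---and that equivalence holds iff the infinite product of the one--dimensional Hellinger affinities is positive. For $N(0,1)$ against $N(0,\lambda)$ the Hellinger affinity equals $\sqrt{2}\,\lambda^{1/4}(1+\lambda)^{-1/2}$, so that
\[
    1-H_n^2=\frac{(\sqrt{\lambda_n}-1)^2}{1+\lambda_n}.
\]
Since the $\lambda_n$ lie in a fixed compact subinterval of $(0,\infty)$, this is comparable to $(\lambda_n-1)^2$, and the product $\prod_n H_n$ is positive iff $\sum_n(1-H_n^2)<\infty$ iff $\sum_n(\lambda_n-1)^2<\infty$. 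But $\sum_n(\lambda_n-1)^2$ is precisely $\norm{T-I}_{\mathrm{HS}}^2$ with $T-I=RR^*-I$, so positivity of the Hellinger product is equivalent to condition $(3)$. Tracing the reductions backwards then shows that $\mu$ and $\nu$ are equivalent iff $(1)$, $(2)$, $(3)$ all hold, and mutually singular otherwise.

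The hard part will be the covariance reduction in the non-commuting case: $C_\mu$ and $C_\nu$ need not commute, so there is no common eigenbasis for the two covariances and one must instead diagonalize the relative operator $T=RR^*$, verifying carefully that $R$ is bounded with bounded inverse under condition $(1)$---equivalently, that the two Cameron--Martin spaces carry equivalent norms. Making the passage to products rigorous, by constructing the measurable isomorphism that turns $\mu$ and $\nu$ into genuine product measures and transferring Kakutani's dichotomy back to $X$, is the technical heart of the argument; once this is set up, the Hellinger computation and its identification with the Hilbert--Schmidt condition are routine.
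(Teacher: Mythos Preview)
The paper does not prove this theorem: it is quoted verbatim as \cite{bogachev}*{Theorem 2.7.2} and used as a black box, with only the diagonal-covariance corollary stated afterwards (also without proof). There is therefore no ``paper's own proof'' to compare your proposal against.

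That said, your sketch is a reasonable outline of the standard argument, and the reductions you describe (Cameron--Martin for the mean shift, then Kakutani on a product representation) are indeed the classical route. One point to watch: you propose to ``diagonalize $T=RR^*$ in an orthonormal eigenbasis,'' but a bounded self-adjoint operator need not have pure point spectrum. The usual way around this is to first show that if $T-I$ fails to be compact then the measures are already singular (via a zero--one/tail argument), and only then, once $T-I$ is known to be compact, pass to an eigenbasis and invoke Kakutani to distinguish the Hilbert--Schmidt case from the merely compact case. Your final paragraph correctly flags the boundedness of $R$ and the construction of the product isomorphism as the genuine technical work; the compactness dichotomy just mentioned should be added to that list.
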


We will only use criterion (3) to prove that two given Gaussian measures are mutually singular. In particular as we consider very simple Gaussian measures on the Hilbert space $H^{\sigma}(\mathbb{T}^2)$ with diagonal co-variance operators, we obtain the following useful corollary.  

\begin{corollary}\label{thm.feldmanHajek} Write $e_n(x)=e^{in\cdot x}$, and let us consider the two Gaussian measures 
\[
    \mu = \sum_{n \in \mathbb{Z}^2} h_ne_n \text{ and } \nu = \sum_{n \in \mathbb{Z}^2} \tilde{h}_ne_n 
\] 
on $H^{\sigma}(\mathbb{T}^2)$, where $h_k, \tilde{h}_k$ are centered Gaussian variables of variances $\sigma_k, \tilde{\sigma}_k$. If 
\[
    \sum_{n\in\mathbb{Z}^2} \left(\frac{\tilde{\sigma}_n^2}{\sigma_n^2}-1\right)^2 = \infty\,,
\]
then $\mu$ and $\nu$ are mutually singular. 
\end{corollary}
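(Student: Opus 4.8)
The plan is to deduce \cref{thm.feldmanHajek} directly from the Feldman--H\'ajek theorem by verifying that criterion (3) fails, so that the only remaining alternative in the dichotomy is mutual singularity. First I would set up the two measures $\mu$ and $\nu$ as centered Gaussians on the Hilbert space $X = H^\sigma(\mathbb T^2)$, restricting attention to the closed subspace spanned by the (real and imaginary parts of the) Fourier modes actually carrying mass, so that all covariance operators are injective there. Because the modes $e_n$ are (after the standard real reparametrization of \cref{assump.gn}) jointly independent, both covariance operators $C_\mu$ and $C_\nu$ are diagonal in the orthonormal basis $\{\langle n\rangle^{-\sigma} e_n\}$ of $H^\sigma$, with eigenvalues proportional to $\sigma_n^2$ and $\tilde\sigma_n^2$ respectively (up to the common factor $\langle n\rangle^{-2\sigma}$ coming from the $H^\sigma$ inner product, which cancels in all the ratios below). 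In particular $C_\mu$ and $C_\nu$ have the same eigenbasis, so $C_\mu^{-1/2} C_\nu^{1/2}$ is the diagonal operator with eigenvalues $\tilde\sigma_n/\sigma_n$, and hence
\[
    \bigl(C_\mu^{-1/2}C_\nu^{1/2}\bigr)\bigl(C_\mu^{-1/2}C_\nu^{1/2}\bigr)^* - I
\]
is the diagonal operator with eigenvalues $\tfrac{\tilde\sigma_n^2}{\sigma_n^2} - 1$.

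Next I would recall that a diagonal operator on a separable Hilbert space is Hilbert--Schmidt if and only if the sequence of its eigenvalues is square-summable. Thus criterion (3) of Feldman--H\'ajek holds if and only if
\[
    \sum_{n\in\mathbb Z^2} \left(\frac{\tilde\sigma_n^2}{\sigma_n^2} - 1\right)^2 < \infty.
\]
Under the hypothesis of the corollary this sum diverges, so criterion (3) fails; since the means are both zero (so criteria (1)--(2) are irrelevant to ruling out equivalence once (3) fails), the Feldman--H\'ajek dichotomy forces $\mu$ and $\nu$ to be mutually singular, which is exactly the claim. One should also note that the hypothesis implicitly guarantees both measures are genuinely supported on $H^\sigma$ and that infinitely many modes are active, so the subspace reduction above is harmless.

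The only genuinely delicate point — and the one I would be most careful about — is matching the abstract covariance-operator formulation of Feldman--H\'ajek with the concrete Fourier description: one must be sure the $H^\sigma$ geometry (the weights $\langle n\rangle^{2\sigma}$) is consistently accounted for on both sides so that it cancels, that the complex Gaussian $g_n$ is correctly unpacked into independent real Gaussians per \cref{assump.gn} (this affects the exact eigenvalue bookkeeping by harmless factors of $2$ that do not change square-summability), and that $C_\mu^{-1/2}$ is interpreted on the appropriate (dense) domain rather than on all of $X$. Once that identification is pinned down, the rest is the elementary observation about Hilbert--Schmidt diagonal operators. I would keep this short, essentially: identify the eigenvalues, invoke the Hilbert--Schmidt criterion, and conclude via the dichotomy.
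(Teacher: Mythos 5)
Your argument is correct and is exactly the route the paper takes: the paper states \cref{thm.feldmanHajek} as an immediate consequence of the Feldman--H\'ajek theorem for measures with diagonal covariance operators, and your write-up simply fills in the (routine) verification that criterion (3) reduces to square-summability of $\tfrac{\tilde\sigma_n^2}{\sigma_n^2}-1$, with the $\langle n\rangle^{2\sigma}$ weights cancelling. No gaps.
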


In order to prove \cref{thm.probaGenericity}, let us fix a sequence $(a_n)_{n\in\mathbb{Z}^2}$ such that its associated measure belongs to $\mathcal{M}^{\sigma}_{\operatorname{rad}}$ and such that $\gamma_{s,(a_n)} \neq 0$. Let us consider a sequence $(\varepsilon^{\xi}_n)_{n\in \mathbb{Z}^2}$ of independent and identically distributed Bernoulli random variables satisfying $\mathbb{P}(\xi : \varepsilon^{\xi}_n = 1)=\mathbb{P}(\xi : \varepsilon^{\xi}_n = -1) = \frac{1}{2}$. We introduce
\[
    b^{\xi}_n=a_n\left(1+\frac{\varepsilon^{\xi}_n}{2\langle n\rangle}\right)^{\frac{1}{2}},
\]
and consider the associated measures $\mu_{\xi}\coloneqq\mu_{(b^{\xi}_n)}$. We check that there is a set $\Xi_1$ such that $\mathbb{P}(\xi \in \Xi_1)>0$ and for all $\xi \in \Xi_1$, there holds $\gamma_{(b^{\xi}_n)}\neq 0$. Indeed, there holds 
\[
    \mathbb{E}[\gamma_{s,(b^{\xi}_n)}] = \sum_{n, q \in \mathbb{Z}^2}\mathbb{E}[|b_n^{\xi}|^2|b_q^{\xi}|^2]  \beta_{n,q}.    
\]
Since $\beta_{n,q}=0$ when $n=q$, it follows that  
\[
    \mathbb{E}[|b_n^{\xi}|^2|b_q^{\xi}|^2] = |a_n|^2|a_q|^2\mathbb{E}\left[1+\frac{\varepsilon^{\xi}_n}{\langle n\rangle}\right]\mathbb{E}\left[1+\frac{\varepsilon^{\xi}_q}{\langle q\rangle}\right] = |a_n|^2|a_q|^2
\]
so that $\mathbb{E}[\gamma_{s,(b^{\xi}_n)}] = \gamma_{s,(a_n)} \neq 0$, by assumption, therefore $\gamma_{s,(b^{\xi}_n)} \neq 0$ for $\xi \in \Xi_1$ of positive measure. 

Let us fix $\xi_1 \in \Xi_1$, and apply \cref{thm.feldmanHajek} to $\mu_{\xi_1}$ and $\mu_{\xi}$ for some $\xi \in \Xi$ (the probability space). We write that: 
\begin{align}
    \label{eq.termSeries}
    \left(\frac{1+\frac{\varepsilon_n^{\xi_1}}{2\langle n\rangle}}{1+\frac{\varepsilon_n^{\xi}}{2\langle n\rangle}}-1\right)^2 & = \frac{1-\varepsilon_n^{\xi_1}\varepsilon_n}{2\langle n \rangle ^2} + \mathcal{O}(|n|^{-3})\\
    & = \frac{1}{2\langle n\rangle^2} - \frac{\varepsilon_n^{\xi_1}\varepsilon_n^{\xi}}{2\langle n \rangle^2} + \mathcal{O}(|n|^{-3}). \notag
\end{align}
The series $\sum_{n\in\mathbb{Z}^2} \frac{1}{\langle n \rangle ^2}$ diverges, and the series of terms $\mathcal{O}(|n|^{-3})$ converge. Finally, the random (in $\xi$) series $\sum_{n\in\mathbb{Z}^2} \frac{\varepsilon_n^{\xi_1}\varepsilon_n^{\xi}}{\langle n\rangle^{2}}$
is convergent for almost every $\xi \in \Xi$ (let $\Xi_*$ be this set of $\xi$) by application of Kolmogorov's three series Theorem (for instance we refer to \cite{durett}*{Section 1.8}). 
Therefore, we obtain that the series of general term~\eqref{eq.termSeries} is divergent for all $\xi \in \Xi_*$, which is of probability one. This yields uncountable many measures $\mu_{\xi}$ which are mutually absolutely singular with respect to $\mu_{\xi_1}$. Note that this does not yield a set of measures which are pairwise mutually singular. The end of the proof proceeds by contradiction: assume that there are only a countable number of measures $\{\mu_k\}_{k\geqslant 1}$ which are pairwise mutually singulars and such that for each $\mu_k$, $\gamma \neq 0$. Let $\Xi_{(k)}$ be the set of $\xi$ such that $\mu_{\xi} \perp \mu_{k}$. We have shown that $\mathbb{P}(\xi \in \Xi_1 \cap \Xi_{(k)}) = \mathbb{P}(\xi \in \Xi_1)>0$. 
Therefore, $\mathbb{P}(\xi \in \Xi_1 \cap \bigcap_{k\geqslant 1} \Xi_{(k)}) = \mathbb{P}(\xi \in \Xi_1) >0$. Observe now that for all $\xi \in \Xi_1 \cap \bigcap_{k\geqslant 1} \Xi_{(k)}$ there holds $\mu_{\xi} \perp \mu_k$ for all $k\geqslant 1$ and all $\gamma_{(b_n^{\xi})} \neq 0$, therefore a contradiction. 

\subsection{Proof of \cref{thm.examples} (\textit{i})}

Let us assume that $\mu$ has a Fourier support 
\[
    S=\{n\in\mathbb{Z}^2, \quad a_n \neq 0\}
\] 
which is finite.

Let us follow the terminology in~\cite{elgindiHuSverak} and say that a pair $(n,q)$ is degenerate if $n$ and $q$ are co-linear (that is the line passing through $n$ and $q$ is passing through the origin) or belong to the same circle centered at $0$. In other words, $(n,q)$ is degenerate if and only if $(n\cdot q^{\perp})\left(\frac{1}{|n|^2}-\frac{1}{|q|^2}\right)=0$.

If there exists $s\in (0,\sigma]$ such that $\gamma_s\neq 0$, there is nothing to prove. Otherwise, let us assume that $\gamma_s = 0$ for all $s \in (s_0-\varepsilon, s_0+\varepsilon)$ for some $s_0\in (0,\sigma)$. Differentiating the equality $\gamma_s = 0$ with respect to $s$ and summing linear combinations of iterated derivatives of this equality, we find that for any polynomial $P \in \mathbb{R}[X]$ there holds
\begin{align}
    0=\sum_{|n|,|q|} & |a_n|^2|a_q|^2(n\cdot q^{\perp})^2\left(\frac{1}{|n|^2}-\frac{1}{|q|^2}\right) \Biggl(P\left(2 \log (\langle n+q \rangle)\right)\langle n+q \rangle^{2s}\left(\frac{1}{|n|^2}-\frac{1}{|q|^2}\right)\label{eq.equalityPol} \\
    & + P\left(2 \log (\langle n\rangle)\right)\langle n\rangle^{2s}\left(\frac{1}{|q|^2}-\frac{1}{|n+q|^2}\right) + P\left(2 \log (\langle q\rangle)\right)\langle q \rangle^{2s}\left(\frac{1}{|n+q|^2}-\frac{1}{|n|^2}\right)\Biggr)\notag 
\end{align}
Let $M=\max\{|n|, a_n\neq 0\}$ and let $C_M\coloneqq\{n \in \mathbb{Z}^2, |n|=M \text{ and } a_n\neq 0\}$. Let $P$ be a polynomial such that $P(2\log(\langle k\rangle))=0$ for any $k\leqslant M$. and $P(2\log(\langle k\rangle))=1$ if $M<k\leqslant 2M$. Using this polynomial in~\eqref{eq.equalityPol} yields
\begin{equation}
    \label{eq.degenerate}
    \sum_{n,q: |n+q|>M}\langle n+q\rangle^{2s}|a_n|^2|a_q|^2(n\cdot q^{\perp})^2\left(\frac{1}{|n|^2}-\frac{1}{|q|^2}\right)^2 = 0,  
\end{equation}
from which we infer that if $|n+q|>M$ and $a_n, a_q \neq 0$, then $(n,q)$ is degenerate. 

We distinguish between the case $|C_M|>2$ and the case $|C_M| \leqslant 2$.

Assume that $|C_M|>2$. We claim that all the modes $n$ such that $a_n\neq 0$ are included in the circle $\{|n|=M\}$. In order to obtain this result, let us observe that since $|C_M|>2$ and that $C_M$ is symmetric (by assumption on the $a_n$) it means that there exists at least $p_1,p_2\in C_M$ such that $p_2$ does not belong to the line passing through $0$ and $p_1$. Let $q \neq p_1,p_2$ such that $a_q \neq 0$. Since $a_{-q}=a_q \neq 0$, and that either $|p_i+q|>M$ or $|p_i-q|>M$, there is no loss in generality in assuming $|p_i+q|>M$ so that for $i=1,2$, $(p_i,q)$ is degenerate in view of~\eqref{eq.degenerate}. Since this holds for $p_1$ and $p_2$ not belonging to the same line passing through $0$ this implies that $q$ belongs to the circle of radius $M$ centered at $0$. 

Let us assume $|C_M|\leqslant 2$, which actually means $|C_M|=2$ and $C_M=\{p,-p\}$. Let us prove that any $q$ such that $a_q \neq 0$ belongs to the line passing through $0$ and $p$. Let $q$ such that $|q|<M$ and $a_q\neq 0$. Without loss of generality we can assume $|p+q|>M$ (otherwise there holds $|-p+q|>M$ and replace $p$ by $-p$). Using~\eqref{eq.degenerate} it follows that $(p,q)$ is degenerate, and since $|q|<M=|p|$ it follows that $q$ belongs to the line passing through $0$ and $p$ as claimed.

\begin{remark} Combining this result with \cref{thm.genericity}, it holds that for any sequence $(a_n)_{n\in\mathbb{Z}^2}$ with compact support which is not degenerate, there exists an open neighborhood of $\mu_{(a_n)}$ in $\mathcal{M}^{\sigma}$ such that any measure in this neighborhood is non-invariant under the flow of~\eqref{eq.eulerVorticity} (nor quasi-invariant under \cref{assump.quasinvar}).
\end{remark}

\subsection{Proof of \cref{thm.examples} \textit{(ii)}} 

In this section, we prove that the Gaussian measure on $H^4(\mathbb{T}^2)$ induced by  
\[
    \sum_{n\in\mathbb{Z}^2 \setminus\{0\}} \frac{g_n^{\omega}}{\langle n\rangle^{5} \log(3+\langle n \rangle^2)}e^{in\cdot x}\,,
\]
satisfies $\gamma_{\frac{1}{2}} = \gamma > 0$ (in this section we are choosing $s=\frac{1}{2}$ and henceforth drop the subscript from $\gamma$).
Such an example is the typical example one might consider to produce a measure whose support is in $H^{4} $ but not in $\bigcup_{\varepsilon >0}H^{4 +\varepsilon}$.

Our proof that $\gamma > 0$ is computer assisted using interval arithmetic (see \cite{Tucker2011} for an introduction to mathematically rigorous computation using interval arithmetic, which fully accounts for CPU rounding errors).

We first compute for some $N>0$ the partial sum: 
\[
    \gamma_N \coloneqq \sum_{|n|,|q|>N}\alpha_{n,q} \coloneqq \sum_{|n|,|q|< N} |a_n|^2|a_q|^2(q\cdot n^{\perp})^2\left(\frac{1}{|n|^2}-\frac{1}{|q|^2}\right) \beta_{q,n},
\]
where $\beta_{q,n}=\beta_{q,n,s}$ is given by~\eqref{eq.beta}. Then we separately bound the error $|\gamma - \gamma_N|$ analytically. 
More precisely, the numerical approximation of $\gamma_N$ for $N = 30$ places it in an interval sufficiently far from zero: 
\[
    \left\{
        \begin{array}{rcl}
            \frac{1}{2}\gamma _N &\in& [0.00011184535610465990373, 0.00011184535613147070557] \\
            \frac{1}{2}|\gamma - \gamma _N| &\in & [0,0.00010534979423897216787].
        \end{array}
    \right.    
\]

The interval for $\gamma_N$ is obtained using interval arithmetic implemented in Python with the \texttt{mpmath} package\footnote{\url{https://mpmath.org}}; See \cref{app.code} for the code used to carry out the computation. Computations were carried out with \texttt{python 3.10.6} and the CPU is \texttt{11th Gen Intel(R) Core(TM) i5-1135G7 \@ 2.40GHz.}

Next, let us consider the analytic estimate on $\abs{\gamma - \gamma_N}$. 
First, note that as the summation in $\gamma$ is symmetric in $n$ and $q$, there holds
\[
    \gamma_N=\sum_{|n|,|q|< N} \alpha_{n,q}=2\sum_{|n|, |q|< N} \alpha_{n,q}\mathbf{1}_{|n|<|q|}\,,
\]
so that 
\[
    |\gamma - \gamma_N| \leqslant 2\sum_{|q|\geqslant 1}\sum_{|n|\geqslant N} \alpha_{n,q}  \mathbf{1}_{|n|<|q|} +  2\sum_{|n|\geqslant 1}\sum_{|q|\geqslant N} \alpha_{n,q}  \mathbf{1}_{|n|<|q|}\,. 
\]
Using $|n|<|q|$ we have $\langle n\rangle^{2s} \leqslant 2^s |q|^{2s}$ and $\langle q\rangle ^{2s} \leqslant 2^s |q|^{2s}$ as well as $\langle n+q\rangle^{2s} \leqslant 5^s |q|^{2s}$. Therefore a crude bound on $\beta_{n,q}$ reads
\[
    \alpha_{n,q} \mathbf{1}_{|n|<|q|}\leqslant |q|^2|n|^2\frac{1}{|n|^2}(2^{s+2}+5^s)|q|^{2s}\frac{1}{|n|^{10}|q|^{10}} = \frac{2^{3/2}+3^{1/2}}{|n|^{10}|q|^{7}}, 
\]
where we have used $s=\frac{1}{2}>0$. We have also used $\frac{1}{\log(3+\langle n \rangle ^2)}\leqslant 1$. In the following we use the following bounds: 
\begin{itemize}
    \item $2^{5/2}+5^{1/2}<8$, which is obtained by direct computations (by hand). 
    \item For any $\tau \geqslant 4$ we bound 
    \[
        \sum_{n\in\mathbb{Z}^2 \setminus \{0\}} \frac{1}{|n|^{\tau}} \leqslant \sum_{n\in\mathbb{Z}^2 \setminus \{0\}}\frac{1}{|n|^4} = \sum_{k\geqslant 1} |\{n\in\mathbb{Z}^2, |n|=k\}| \frac{1}{k^4}.
    \]
    We use the very simple fact that 
    \[
        \{n\in\mathbb{Z}^2, |n|=k\} \subset \{n\in\mathbb{Z}^2:\max_{|n_1|,|n_2|}\leqslant k\} \setminus \{0\},
    \]
    so that  $|\{n\in\mathbb{Z}^2, |n|=k\}| \leqslant (2k+1)^2 -1=4k^2+4k \leqslant 8k^2$ 
    and therefore
    \[
        \sum_{n\in\mathbb{Z}^2 \setminus \{0\}} \frac{1}{|n|^{\tau}} \leqslant \sum_{k\geqslant 1} \frac{8k^2}{k^4} \leqslant \frac{8\pi^2}{6}. 
    \]
\end{itemize}

We can also write:
\[
        \sum_{|q|\geqslant N} \frac{1}{|q|^K} = \sum_{k\geqslant 0}\sum_{ 2^k(N+1)\leqslant |q| < 2^{k+1}N} \frac{1}{|q|^K} \leqslant 12 N^{-K+2}\sum_{k\geqslant 0} 2^{-(K-2)k}
\]
therefore since $\sum_{k\geqslant 0} 2^{-(K-2)k} \leqslant 2$ we have the crude bounds
\[
    \sum_{\substack{|n|<|q|}{|q|\geqslant N, |n|\geqslant 1}} \alpha_{n,q} \leqslant  8 \times \frac{8\pi^2}{6} \times \frac{24}{N^5}, 
\]
and 
\[
    \sum_{\substack{|n|<|q|}{|q|\geqslant 1, |n|\geqslant N}} \alpha_{n,q} \leqslant 8 \times \frac{24^2}{N^{13}}
\]
so that finally
\[
        \frac{1}{2}|\gamma -\gamma_N| \leqslant \frac{1536}{N^5}\left(\frac{\pi^2}{6} + \frac{3}{N^8}\right) < \frac{1536}{N^5}\left(\frac{10}{6} + \frac{3}{N^8}\right) =: \varepsilon,
\]
where we have used $\pi^2 <10$. An interval for the value of $\varepsilon$ is also computed in \cref{app.code}. 

\newpage 
\appendix 

\section{Python code}\label{app.code}

\lstinputlisting[language=Python,breaklines]{nume.py}

\bibliographystyle{alpha}
\bibliography{biblio}
\end{document}